\newcommand{\N}{\mathbb{N}}                     
\newcommand{\Z}{\mathbb{Z}}                     
\newcommand{\R}{\mathbb{R}}                     
\newcommand{\D}{\mathbb{D}}
\newcommand{\C}{\mathbb{C}}
\renewcommand{\P}{\mathcal{P}}
\renewcommand{\sl}{{\rm sl}}
\newcommand{\wind}{{\rm wind}}
\numberwithin{equation}{section}
\numberwithin{newcounter}{section}
\numberwithin{figure}{section}
\numberwithin{footnote}{section}
\newtheorem{theorem}{Theorem}[section]
\newtheorem{lemma}[theorem]{Lemma}
\newtheorem{corollary}[theorem]{Corollary}
\newtheorem{proposition}[theorem]{Proposition}
\theoremstyle{definition}
\newtheorem{definition}[theorem]{Definition}
\theoremstyle{remark}
\newtheorem{remark}[theorem]{Remark}
\numberwithin{equation}{section}
\begin{document}

\title[Global sections for Reeb flows on $L(p,1)$]{Global surfaces of section for dynamically convex Reeb flows on lens spaces}

\author{A. Schneider}

\address{Universidade Estadual do Centro-Oeste, Rua Camargo Varela de S\'a, $3$, Guarapuava – PR, Brazil, 85040-080}

\email{alexsandro@unicentro.br}

\begin{abstract}
We show that a dynamically convex Reeb flow on the standard tight  lens space $(L(p, 1),\xi_{\rm std})$,  $p>1,$  admits a $p$-unknotted closed Reeb orbit $P$ which is the binding of a rational open book decomposition with disk-like pages.  Each page is a rational global surface of section for the Reeb flow and the Conley-Zehnder index of the $p$-th iterate of $P$ is $3$.
We also check dynamical convexity in the H\'enon-Heiles system for low positive energies. In this case the rational open book decomposition follows from the fact that  the sphere-like component of the energy surface admits a $\Z_3$-symmetric periodic orbit and the flow descends to a Reeb flow on the standard tight $(L(3,2),\xi_{\rm std})$.

\end{abstract}

\maketitle
\tableofcontents

\section{Introduction}

It is a classical problem in conservative dynamics to investigate the existence of periodic motions of Hamiltonian systems restricted to energy levels. Poincar\'e and Birkhoff used global surfaces of section to study periodic trajectories in the restricted three body problem and geodesic flows in the $2$-sphere. In such systems, the energy levels are diffeomorphic to the real projective $3$-space and, under some good conditions, the flow is reduced to a surface map. Tools from discrete dynamics then come into play and one can derive the existence of other periodic trajectories.

The systems considered by Poincar\'e and Birkhoff are particular cases of Reeb flows on the universally tight lens space $L(2,1) \simeq \R P^3$, as shown in \cite{albers2012contact}. In order to study such systems, one usually lifts the flow to the tight $3$-sphere via a suitable double covering map. In some cases, it is even possible to show that the Reeb flow is dynamically convex and prescribe the boundary of the global surface of section \cite{albers2012global,pedro_proceedings_icm}.

Hofer, Wysocki and Zehnder \cite{hofer1998dynamics} used  pseudoholomorphic curves in symplectizations  to study dynamically convex Reeb flows on the tight $3$-sphere. They found a special periodic orbit, which is unknotted and has Conley-Zehnder index $3$,  bounding a disk-like global surface of section.  One of the consequences of this global section and a result of Franks \cite{franks1992geodesics} is that the Reeb flow admits either $2$ or infinitely many periodic orbits. This result applies to Hamiltonian dynamics on strictly convex hypersurfaces in $\R^4$.

Using the same methods in \cite{hofer1998dynamics}, Hryniewicz and Salom\~ao \cite{hryniewicz2016elliptic} proved a similar result for Reeb flows on $\R P^3 \equiv S^3 / \Z_2 \equiv L(2,1),$ equipped with the universally tight contact structure: if the Reeb flow is dynamically convex then it admits an elliptic and $2$-unknotted periodic orbit which is the binding of a rational open book decomposition. Each page is a $2$-disk for the binding and constitutes a rational global surface of section for the Reeb flow. The main motivation of this result was to study the circular planar restricted three body problem directly on the regularized $\R P^3$, without considering the usual lift to the tight $3$-sphere.

Here we generalize results in \cite{hofer1998dynamics} and \cite{hryniewicz2016elliptic} to lens spaces  $L(p,1)$, $p>1$, equipped with the standard tight contact structure $\xi_{\rm std}$. We use the theory of pseudoholomorphic curves in symplectizations in order to show that if the Reeb flow on such a contact manifold is dynamically convex then it admits a closed Reeb orbit $P$ which is $p$-unknotted and bounds a rational disk-like global surface of section. Moreover, the Conley-Zehnder index of the $p$-th iterate of $P$ is $3$. In fact, $P$ is the binding of a rational open book decomposition whose pages are rational global surfaces of section.

Our main result applies to Hamiltonian systems whose sphere-like components of the corresponding energy levels present certain $\mathbb{Z}_{p}$-symmetry so that the flow descends to a Reeb flow on  $(L(p,1),\xi_{\rm std})$.

The  H\'enon-Heiles Hamiltonian is  $\mathbb{Z}_{3}$-symmetric and for energies $0<E<1/6$ the flow on the sphere-like component $S_E$ of its energy level descends to a Reeb flow on $(L(3,2), \xi_{\rm std})$. In particular, our previous result does not apply. In spite of that, we can use a criterium found in \cite{salomao2004convex} to check  that $S_E$ is strictly convex. Hence,  by a result of Hofer, Wysocki and Zehnder, it is also dynamically convex.  Since $S_E$ contains two $Z_3$-symmetric periodic orbits which descend to $3$-unknotted closed Reeb orbits with rational self-linking number $-1/3$, it turns out that results in \cite{hryniewicz2016elliptic} can be applied to $S_E/\Z_3$ and each one of these periodic trajectories projects to the binding of a rational open book decomposition with similar properties.

\subsection{The existence of the binding orbit} Finding necessary and sufficient conditions for a closed Reeb orbit to be the boundary of a disk-like global surface of section constitutes an important question in Reeb dynamics. The importance of this problem relies on the fact that, under some good circumstances, this closed Reeb orbit characterizes the contact manifold. Relevant research in this direction is found in \cite{hofer1996characterisation, hofer1998dynamics, hhofer1999characterization, hryniewicz2012fast, HLS}

In \cite{hofer1996characterisation,hryniewicz2012fast} the authors characterize the tight $3$-sphere. If a dynamically convex contact form  on a co-orientable contact $3$-manifold $(M,\xi)$ admits an unknotted closed Reeb orbit $P_0$ with self-linking number $-1$  then $(M,\xi)$ is contactomorphic to the tight $3$-sphere. The proof is based on the construction of an open book decomposition with binding $P_0$ whose pages are disk-like global surfaces of section for the Reeb flow. The dynamical convexity hypothesis used in the construction of the open book can be dropped and one may only require that the Reeb orbits with Conley-Zehnder index $2$ are linked to $P_0$, see \cite{hs_ontheexistenceofdisk2011}. In fact, the closed Reeb orbits with Conley-Zehnder index $2$ represent an obstruction to the existence of an open book decomposition and  other types of transverse foliations may be considered, see \cite{dePaulo_Salomao2,dePaulo_Salomao} for the existence of $3-2-3$ foliations adapted to Reeb flows on the tight $3$-sphere.

Recently, Hryniewicz, Licata and Salom\~ao \cite{HLS} characterized the universally tight lens spaces. If $(M,\xi)$ is a closed co-orientable contact manifold  admitting a Reeb flow with a $p$-unknotted closed Reeb orbit $P_0$, whose self-linking number is $-1/p$ and the Conley-Zehnder index of its $p$-th iterate is at least $3$ then a suitable necessary and sufficient condition on the closed Reeb orbits which are contractible in $M \setminus P_0$ implies that $(M,\xi)$ is a lens space $L(p,q)$, for some $1\leq  q \leq p$, equipped with the standard tight contact structure. The proof is also based on the construction of a rational open book decomposition with binding $P_0$ whose pages are rational disk-like global surfaces of section for the Reeb flow.

The results mentioned above show that the first step in the construction of a disk-like global surface of section is to show the existence of a special closed Reeb orbit which will be the binding of an open book decomposition. Hence, it is of central interest to provide sufficient conditions on the contact form which assure the existence of such a closed Reeb orbit. The following question is not answered in its full generality:

\begin{itemize}
\item[-] Does a dynamically convex Reeb flow on $(L(p,q),\xi_{\rm std})$  admit a $p$-unknotted closed Reeb orbit with self-linking number $-1/p$?
\end{itemize}

Here we give an affirmative answer to this question in the case $q=1$. Together with results in \cite{HLS} and \cite{hryniewicz2016elliptic}, we show that this closed Reeb orbit is the binding of a rational open book decomposition with disk-like pages and each page is a global surface of section.

Recall that a rational global surface of section forces the existence of a second closed Reeb orbit $P_1$ associated to a fixed point of the first return map. The link formed by $P_0$ and $P_1$ is called a Hopf link. A non-resonance condition on the rotation numbers of $P_0$ and $P_1$ corresponds to a twist condition of the first return map and forces the existence of infinitely many closed Reeb orbits, see also \cite{HMS}. Alternatively, a result of Franks \cite{franks1992geodesics} gives infinitely many periodic orbits in case a third one exists. In particular, all such Reeb flows admitting disk-like global surfaces of section have $2$ or infinitely many closed Reeb orbits. It is still an open question whether every $3$-dimensional Reeb flow admits either $2$ or infinitely many closed Reeb orbits. See \cite{ghptorsioncontact2017} for partial answers to this question.

\subsection{Basic concepts and main result} Let $M$ be a smooth oriented $3$-manifold. A contact structure on $M$ is a smooth hyperplane distribution $\xi\subset TM$, locally defined by the kernel of a $1$-form $\lambda$ satisfying $\lambda\wedge d\lambda\neq0$. The pair $(M, \xi)$ is a contact manifold. We say that $(M,\xi)$ is co-orientable if there exists a globally defined $1$-form $\lambda$ on $M$ satisfying $\xi=\ker \lambda$. If $f:M \to \R \setminus \{0\}$ is smooth then $\lambda$ and $f\lambda$ define the same contact structure. Furthermore, since $(f\lambda)\wedge d(f\lambda)=f^2 \lambda \wedge d\lambda$, any contact form defining $\xi$ induces the same orientation on $M$ and we say that $\xi$ is positive if the induced orientation coincides with the orientation of $M$.

A contact structure $\xi$ on $M$ is called overtwisted if there exists an embedded disk $D\hookrightarrow M$ such that $T_{z}(\partial D)\subset\xi_{z}$ and $T_{z}D\neq\xi_{z}$ for all $z\in\partial D$. If such a disk does not exist then the contact structure is called tight.
Let $\lambda$ be a contact form which defines the contact structure $\xi$ on $M$. The vector field on $M$, uniquely determined by
\begin{equation*}
\imath_{X_{\lambda}}d\lambda=0 \mbox{ and } \imath_{X_{\lambda}}\lambda=1,
\end{equation*}
is called the Reeb vector field of $\lambda$. Its flow $\{\varphi_t,t\in \R\}$ is the Reeb flow of $\lambda$. Let $P=(x, T)$ be a periodic orbit of the Reeb flow of $\lambda$, that is  $x:\R \to M$ is periodic, it satisfies $x(t)=\varphi_t(x(0)) \forall t,$ and $T>0$ is a period of $x$. $P$ is also called a closed Reeb orbit of $\lambda$. We say that $P$ is simple if $T$ is the least positive period of $x$. We denote by $\mathcal{P}(\lambda)$ the set of equivalence classes of periodic orbits of $\lambda$ with the identification
\begin{equation*}
P=(x, T)\sim Q=(y, T') \Leftrightarrow  T=T' \mbox{ and } x(\R)=y(\R).
\end{equation*}
 We say that $P=(x, T)$ is contractible if the loop
\begin{equation}\label{loop}
x_{T}:\R / \Z \to M: t\in\mathbb{R}/\mathbb{Z}\mapsto x(Tt),
\end{equation}
is contractible on $M$. If the first Chern class $c_{1}(\xi)$ vanishes on $\pi_{2}(M)$ then every contractible periodic orbit $P\in\mathcal{P}(\lambda)$ has a well-defined Conley-Zehnder index $\mu_{CZ}(P)\in\mathbb{Z}$.

The dynamical convexity condition was introduced in \cite{hofer1998dynamics}.

\begin{definition}[Hofer, Wysocki and Zehnder]
A contact form $\lambda$ on a smooth closed $3$-manifold $M$ is called  dynamically convex if $c_{1}(\mathrm{ker}\lambda)$ vanishes on $\pi_{2}(M)$ and every contractible periodic orbit $P\in\mathcal{P}(\lambda)$ satisfies $\mu_{CZ}(P)\geq 3$.
\end{definition}

Dynamical convexity imposes obstructions on contact manifolds.

\begin{theorem}[Hofer, Wysocki and Zehnder \cite{hhofer1999characterization}]\label{teo:dynamical_tight}
If $\lambda$ is a dynamically convex contact form on a closed $3$-manifold $M$ then $\pi_{2}(M)$ vanishes and the contact structure $\ker \lambda$ is tight.
\end{theorem}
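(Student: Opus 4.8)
The plan is to argue by contradiction in each half of the statement: assuming $\lambda$ is dynamically convex and that one of the conclusions fails, I would manufacture a contractible periodic Reeb orbit $P$ with $\mu_{CZ}(P)\le 2$, which dynamical convexity forbids. Throughout, fix the symplectization $(W,\omega)=(\R\times M,\,d(e^{a}\lambda))$ together with an $\omega$-compatible almost complex structure $J$ adapted to $\lambda$, so that finite-energy $J$-holomorphic curves in $W$ have the standard asymptotic behaviour over periodic Reeb orbits. Since $c_1(\xi)$ vanishes on $\pi_2(M)$ by hypothesis, $\mu_{CZ}(P)$ is well defined for every contractible $P\in\mathcal{P}(\lambda)$.

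First I would treat tightness. Suppose $\xi=\ker\lambda$ is overtwisted and fix an overtwisted disk $D\hookrightarrow M$; after a $C^{0}$-small perturbation its characteristic foliation is standard, with a single positive elliptic singularity $e$ near which there is a germ of a Bishop family of $J$-holomorphic disks in $W$ with boundary on $\{0\}\times D$. Continuing this family over its maximal interval of existence, as in Hofer's proof of the Weinstein conjecture for overtwisted contact forms, it cannot close up into an embedded ball, since the holonomy of the characteristic foliation of an overtwisted disk obstructs this; hence SFT/Gromov compactness must fail. Exactness of $\omega$ rules out non-constant $J$-holomorphic spheres, so the limiting $J$-holomorphic building contains a non-constant finite-energy plane $\tilde u=(a,u)\colon\C\to W$, whose asymptotic limit at the puncture is a periodic Reeb orbit $P$; since $u$ extends continuously over the puncture, the loop underlying $P$ is contractible in $M$.

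The main obstacle is the index estimate $\mu_{CZ}(P)\le 2$. The plane $\tilde u$ is not arbitrary: it is extracted as a limit of a \emph{one-parameter} Bishop family, and after passing to the underlying somewhere-injective curve and accounting for the $\R$-action on $W$ together with the automorphisms of the domain, this forces the Fredholm index of $\tilde u$ to be small, which bounds $\mu_{CZ}(P)$ from above. Making this precise — identifying which finite-energy plane inside the broken building to use, controlling its winding numbers via the Hofer--Wysocki--Zehnder asymptotic analysis, and excluding degenerate breaking — is the technically demanding part of the argument and is where the compactness and asymptotic theory of pseudoholomorphic curves is essential. Granting it, the orbit $P$ contradicts dynamical convexity, so $\xi$ carries no overtwisted disk and is tight.

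Finally I would deduce $\pi_2(M)=0$. If $\pi_2(M)\ne 0$ then, by the Sphere Theorem, there is an embedded sphere $S\subset M$ with $0\ne[S]\in\pi_2(M)$, so $S$ bounds no embedded $3$-ball. Since $\xi$ is now known to be tight, $S$ may be made convex, whence Giroux's criterion forces its dividing set to be a single circle, and by Giroux flexibility we may normalise the characteristic foliation of $S$ to the model one, with exactly one positive and one negative elliptic singularity. Emanating from the positive singularity, run the Bishop family of $J$-holomorphic disks with boundary on $\{0\}\times S$: if it persisted over the whole parameter range it would sweep out an embedded $3$-ball bounded by $S$, contradicting $[S]\ne 0$; hence compactness again fails, and exactly as before one obtains a contractible periodic Reeb orbit with Conley--Zehnder index at most $2$, contradicting dynamical convexity. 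Therefore $\pi_2(M)=0$.
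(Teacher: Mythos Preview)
The paper does not supply a proof of this theorem; it is quoted as a result of Hofer, Wysocki and Zehnder from \cite{hhofer1999characterization} and used as background. Your outline is consistent with the broad strategy of the original source: both conclusions are obtained by contradiction via Bishop families of $J$-holomorphic disks (rooted at the elliptic singularity of an overtwisted disk for tightness, and of an essential embedded $2$-sphere for the vanishing of $\pi_2$), whose failure to compactify produces a finite energy plane asymptotic to a contractible closed Reeb orbit of small Conley--Zehnder index.

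That said, you explicitly concede the decisive step: the bound $\mu_{CZ}(P)\le 2$ for the asymptotic orbit of the bubbled plane. Your heuristic --- that the one-parameter nature of the Bishop family forces the limiting plane to have small Fredholm index --- does not by itself deliver this. One must identify which component of the limiting holomorphic building actually carries the constraint, verify somewhere-injectivity or treat multiple covers, and establish enough transversality so that virtual dimensions are actual dimensions; only then does a dimension count translate into an upper bound on $\mu_{CZ}(P)$. In HWZ's argument this is handled by tracking the Maslov data of the Bishop disks through the degeneration and invoking the precise asymptotic description of the plane near its puncture; that bookkeeping is the real substance of the theorem. Writing ``granting it'' at exactly this point means your proposal is a correct outline of the strategy rather than a proof.
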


A knot $K\hookrightarrow M$ is called $k$-unknotted, for some $k\in\mathbb{N}$, if there exists an immersion $u:\mathbb{D}\rightarrow M$, so that $u|_{\D \setminus \partial \D}$ is an embedding and $u|_{\partial\mathbb{D}}:\partial\mathbb{D}\rightarrow K$ is a $k$-covering map. The map $u$ is called a \textit{$k$-disk} for $K$. If $K$ is oriented then we say that $u$ induces the same orientation as $K$ if $u|_{\partial\mathbb{D}}$ preserves orientation, where $\partial\mathbb{D}$  has the counter-clockwise orientation. If the $k$-unknotted $K$ is transverse to $\xi$ then $K$ is oriented by $\lambda$ and there exists a well-defined rational self-linking number $\sl(K,u)\in\mathbb{Q}$, computed with respect to a $k$-disk $u$ for $K$, see \cite{baker2012rational}. If the first Chern class $c_{1}(\xi)$ vanishes on $\pi_{2}(M)$ then $\sl(K)=\sl(K,u)$ does not depend on the choice of $u$.

\begin{definition}
Let $\lambda$ be a defining contact form for a closed contact $3$-manifold $(M,\xi)$. Let $K \hookrightarrow M$ be a $k$-unknotted closed Reeb orbit of $\lambda$. A rational disk-like global surface of section bounded by $K$ is a $k$-disk $u:\mathbb{D}\rightarrow M$ for $K=u(\partial\mathbb{D})$ so that $u(\mathbb{D}\setminus\partial\mathbb{D})$ is transverse to $X_{\lambda}$, and every Reeb trajectory in $M\setminus K$ hits $u(\D \setminus \partial \D)$ infinitely many times in the past and in the future. In particular, the Reeb flow of $\lambda$ is encoded in the corresponding first return map $\psi: u(\D \setminus \partial \D) \to u(\D \setminus \partial \D)$.
\end{definition}

Let $(x_1,x_2,y_1,y_2)$ be coordinates in $\R^4$. Equip $\R^4$ with the standard symplectic form
$$
\omega_0 = \sum_{i=1}^2 dy_i \wedge dx_i.
$$
The Liouville form
\begin{equation*}
\lambda_{0}=\frac{1}{2}(y_idx_i - x_idy_i),
\end{equation*}
is a primitive of $\omega_0$ and restricts to a contact form on the $3$-sphere
\begin{equation*}
S^3=\{x_1^2+x_2^2+y_1^2+y_2^2=1\}.
\end{equation*}
The contact structure $\xi_{\rm std}=\ker \lambda_0$ is called the standard tight contact structure on $S^3$. It is well known that it is the unique tight contact structure on $S^3$ up to diffeomorphism.

Given relatively prime integers $p \geq q \geq 1$, there exists a free action of $\mathbb{Z}_{p}:=\mathbb{Z}/p\mathbb{Z}$ on $(S^{3}, \xi_{0})$ induced by $g_{p, q}:\C^2 \to \C^2$,
\begin{equation}\label{contactomorphism}
g_{p, q}(z_1=x_1+iy_1, z_2=x_2+iy_2)=\left(e^{2\pi i/p}z_1, e^{2\pi iq/p}z_2\right),
\end{equation} via the identification $\C^2 \equiv \R^4$.
The orbit space
\begin{equation*}
L(p, q):=S^3/\mathbb{Z}_{p},
\end{equation*}
is called a lens space. Since $g_{p,q}^*\lambda_0 = \lambda_0$ both  $\lambda_0$ and $\xi_{\rm std}$ descend to $L(p,q)$. They will be referred to as the Liouville form and the standard tight contact structure on $L(p,q)$, respectively, and will be still denoted by $\lambda_0$ and $\xi_{\rm std}$.

\begin{definition}
Let $\lambda$ be a defining contact form on $(L(p,q),\xi_{\rm std})$.  A rational open book decomposition with disk-like pages and binding orbit $K$ is a pair $(\pi,K)$ formed by a $p$-unknotted closed Reeb orbit $K\hookrightarrow L(p,q)$ and a smooth fibration $\pi:M \setminus K \to S^1$ so that the closure of each fiber $\pi^{-1}(t)$ is the image of a $k$-disk for $K$.
\end{definition}

Our main statement extends the main results in \cite{hofer1998dynamics,hryniewicz2016elliptic} for $(L(p,1),\xi_{\rm std})$.

\begin{theorem}\label{teo:lp}
 Let $\lambda$ be a  defining contact form on $(L(p, 1),\xi_{\rm std})$. If $\lambda$ is dynamically convex then its Reeb flow admits a $p$-unknotted closed Reeb orbit which is the binding of a rational open book decomposition. Each page of the open book is a rational disk-like global surface of section. Moreover, the Conley-Zehnder index of the $p$-th iterate of $P$ is $3$.
\end{theorem}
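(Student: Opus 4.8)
The plan is to lift everything to the tight three-sphere along the universal cover, run a $\Z_p$-equivariant version of the Hofer--Wysocki--Zehnder construction there, and then push the resulting structure back down to $L(p,1)$. Write $\pi\colon S^3\to L(p,1)=S^3/\Z_p$ for the covering induced by $g_{p,1}$ and set $\tilde\lambda=\pi^*\lambda$. Since $g_{p,1}^*\lambda_0=\lambda_0$ and $\lambda$ is a defining form on the quotient, $\tilde\lambda$ is a $\Z_p$-invariant defining contact form for $\xi_{\rm std}$ on $S^3$ whose Reeb field $X_{\tilde\lambda}$ is $\pi$-related to $X_\lambda$ and $\Z_p$-invariant. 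I would first check that $\tilde\lambda$ is dynamically convex: as $\pi_2(S^3)=0$, only the index bound matters. Given a contractible $\tilde P=(\tilde x,\tilde T)\in\P(\tilde\lambda)$, let $P=(x,T)\in\P(\lambda)$ be its image, so $x=\pi\circ\tilde x$ up to reparametrisation and $\tilde T=mT$ for some $m\in\N$. Then $x_{\tilde T}=\pi\circ\tilde x_{\tilde T}$ is the image of a contractible loop, hence contractible in $L(p,1)$, so $\mu_{CZ}(P^m)$ is defined; pushing a capping disk for $\tilde x_{\tilde T}$ in $S^3$ down to a capping disk for $x_{\tilde T}$ and using that $\pi$ is a local diffeomorphism pulling $(\xi_{\rm std},d\lambda)$ back to $(\xi_{\rm std},d\tilde\lambda)$ and intertwining the linearised Reeb flows, one gets $\mu_{CZ}(\tilde P)=\mu_{CZ}(P^m)\geq 3$ from dynamical convexity of $\lambda$. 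Hence $\tilde\lambda$ is dynamically convex on $S^3$, and by Theorem~\ref{teo:dynamical_tight} its contact structure is (re)confirmed tight.

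The core of the argument is a $\Z_p$-equivariant refinement of the main theorem of \cite{hofer1998dynamics}: a $\Z_p$-invariant dynamically convex contact form on $S^3$ admits a $\Z_p$-invariant, unknotted closed Reeb orbit $\tilde P_0$ with $\sl(\tilde P_0)=-1$ and $\mu_{CZ}(\tilde P_0)=3$ which is the binding of a $\Z_p$-invariant open book decomposition of $S^3$ whose pages are disk-like global surfaces of section. I would establish this by repeating the scheme of \cite{hofer1998dynamics} and \cite{hryniewicz2016elliptic} while carrying the symmetry throughout: average the nondegeneracy perturbation so that $\tilde\lambda$ can be made nondegenerate while staying $\Z_p$-invariant; fix a $d\tilde\lambda$-compatible $\Z_p$-invariant almost complex structure $J$ on $\R\times S^3$; and study the moduli space of finite-energy $J$-holomorphic planes of controlled Fredholm index and area asymptotic to simple closed Reeb orbits. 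Bubbling-off analysis, automatic transversality, and positivity of intersections produce a fast plane whose asymptotic limit $\tilde P_0$ has $\mu_{CZ}(\tilde P_0)=3$ and $\sl(\tilde P_0)=-1$, and the one-parameter family of such planes sweeps out an open book with binding $\tilde P_0$; since $\mathrm{id}_\R\times g_{p,1}$ maps $J$-holomorphic planes to $J$-holomorphic planes, uniqueness of this distinguished object (respectively of the associated foliation) forces $g_{p,1}(\tilde P_0)=\tilde P_0$ and invariance of the whole open book.

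Granting this, the descent is routine. Since $\Z_p$ acts freely on $S^3$ and preserves the simple orbit $\tilde P_0$, it acts on the circle $\tilde P_0$ freely of order $p$, so $P:=\pi(\tilde P_0)$ is a simple closed Reeb orbit of $\lambda$ and $\tilde P_0\to P$ is a $p$-fold covering; a page of the invariant open book is disjoint from its $\Z_p$-translates (a fixed page cannot be invariant, by Brouwer), so it descends to an embedded $p$-disk for $P$, whence $P$ is $p$-unknotted, $\sl(P)=\tfrac1p\sl(\tilde P_0)=-\tfrac1p$, and $\mu_{CZ}(P^{\,p})=\mu_{CZ}(\tilde P_0)=3$. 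The invariant fibration $S^3\setminus\tilde P_0\to S^1$ descends to a fibration $L(p,1)\setminus P\to S^1$ with $p$-disk pages transverse to $X_\lambda$, and since every Reeb trajectory in $S^3\setminus\tilde P_0$ meets every page infinitely often in the past and future, the same holds in $L(p,1)\setminus P$; thus each page is a rational disk-like global surface of section and we obtain the desired rational open book decomposition with binding $P$. Alternatively, once $P$ has been produced with $\sl(P)=-1/p$ and $\mu_{CZ}(P^{\,p})=3$, one verifies — exactly as in \cite{hryniewicz2016elliptic} — that the Reeb orbits contractible in $L(p,1)\setminus P$ satisfy the linking condition of \cite{HLS}, and invokes the characterisation there together with \cite{hryniewicz2016elliptic} to obtain the rational open book and the global sections directly.

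I expect the one real difficulty to be the equivariance in the middle step: running the full pseudoholomorphic-curve theory $\Z_p$-equivariantly — equivariant genericity and transversality for the relevant moduli spaces — and, above all, showing that the binding orbit and its open book come out genuinely invariant rather than merely permuted within a $\Z_p$-orbit of candidate objects. This should follow from uniqueness of the index-$3$, self-linking-$(-1)$ holomorphic plane (respectively of the associated transverse foliation) selected by the construction, which is precisely the mechanism exploited in \cite{hryniewicz2016elliptic} for $p=2$; the remaining work is to check that these uniqueness inputs and the accompanying compactness and intersection estimates persist for every $p>1$.
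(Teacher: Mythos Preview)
Your approach is genuinely different from the paper's, and the step you flag as ``the one real difficulty'' is a real gap, not a technicality. The binding orbit and open book produced by the Hofer--Wysocki--Zehnder construction on $S^3$ are \emph{not} unique: a dynamically convex contact form on $S^3$ may carry several unknotted closed Reeb orbits with $\mu_{CZ}=3$ and $\sl=-1$ (already on an irrational ellipsoid with $r_2^2/r_1^2\in(1,2)$ both Hopf circles qualify), and for each such orbit there is a one-parameter family of open books. With a $\Z_p$-invariant $J$ the group does act on the relevant moduli spaces, but the bubbling-off analysis outputs an individual curve, and nothing forces that curve --- or its asymptotic orbit --- to be a fixed point of the $\Z_p$-action rather than one member of a nontrivial $\Z_p$-orbit of candidates. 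Your assertion that this ``is precisely the mechanism exploited in \cite{hryniewicz2016elliptic} for $p=2$'' is incorrect: that paper, like the present one, works directly on the quotient $L(2,1)=\R P^3$ via a symplectic cobordism to an ellipsoid, and never appeals to uniqueness on $S^3$.

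The paper instead builds the cobordism on $L(p,1)$ itself, between $\lambda$ and the ellipsoid form $\lambda_E$, and studies the $2$-dimensional moduli space $\Theta'$ of embedded generalized finite-energy planes asymptotic to the contractible orbit $P_1^{\,p}$ (the $p$-th iterate of the short Hopf orbit on the quotient ellipsoid). A sequence in $\Theta'$ with $\min a_n\to-\infty$ SFT-converges to a bubbling-off tree which, after a careful index analysis, is shown to consist of exactly two vertices: a $\bar J$-holomorphic cylinder at the root and an embedded fast $\tilde J$-holomorphic plane $\tilde u_q$ at the leaf, asymptotic to a contractible orbit $P$ with $\mu_{CZ}(P)=3$. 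The new content for $p>2$ lies precisely in controlling the possible multiple covers: one needs a special trivialization $\beta$ along the non-contractible $P_1$ with $\mu_{CZ}(P_1^j,\beta^j)=2j-1$ and $\wind(\beta^p,\beta_{\rm disk})=2-p$ (Lemma~\ref{lema:trivializacao}), a structure lemma for non-simple index-$3$ orbits (Lemma~\ref{lema:potencia_da_orbita}), and a ``relatively prime puncture'' argument to ensure $u_q$ is an embedded $p$-disk. The characterisation of \cite{HLS} then forces the covering multiplicity to equal $p$ and yields the rational open book; the degenerate case follows by approximation. None of these steps is visible from the $S^3$ side without already having solved the invariance problem you left open.
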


\begin{remark} A similar result is expected to hold on $(L(p,q),\xi_{\rm std})$ for $q\neq 1$. However, the Conley-Zehnder index of the $p$-th iterate of the binding orbit $P$ might not necessarily be equal equal to $3$.
\end{remark}

See \cite{frauenfelder2016real} for the existence of other types of $\Z_2$-symmetric disk-like global surfaces of section for Reeb flows on the tight $3$-sphere.

\subsection{An application}A Hamiltonian function $H:\mathbb{R}^{4}\rightarrow\mathbb{R}$ determines the vector field $X_{H}$ by $$\imath_{X_{H}}\omega_0=-dH.$$
Its flow preserves each energy level $S_{E}=H^{-1}(E)$ and we say that $S_E$ is starshaped with respect to $0\in \R^4$ if $S_E\subset \R^4$ is an embedded $3$-sphere, the origin $0$  is contained in the bounded component of $\R^4 \setminus S_E$ and each ray issuing from $0$ intersects $S_E$ transversally at a unique point. In this case $\lambda:=\lambda_0|_{S_E}$ is a contact form on $S_E$ and its Reeb flow is equivalent to the Hamiltonian flow of $H$ on $S_E$.

Let us assume that $H$ is $\mathbb{Z}_{p}$-symmetric with respect to the $\Z_p$-action given in \eqref{contactomorphism}. Notice that the Liouville form $\lambda_0$ is also $\Z_p$-symmetric. It follows that the Reeb flow of $\lambda$ descends to a Reeb flow on  $S_{E}/\mathbb{Z}_{p}\simeq (L(p, 1),\xi_{\rm std})$. The following theorem is a corollary of Theorem \ref{teo:lp}.

\begin{theorem}\label{teo:aplic_lp}
Assume that $\lambda=\lambda_0|_{S_E}$ is a dynamically convex contact form on the starshaped hypersurface $S_E=H^{-1}(E)\subset \R^4$. Assume, moreover, that $H$ is $\Z_p$-symmetric with respect to the action generated by $g_{p,1}$, see \eqref{contactomorphism}. Then the Reeb flow of $\lambda$ admits a $\Z_p$-sym\-metric unknotted periodic orbit $P$ with Conley-Zehnder index $3$ which is the binding of an open book decomposition. Each page of the open book is a disk-like global surfaces of section. Moreover, this open book is the lift to $S_E$ of a rational open book decomposition adapted to the projected Reeb flow on $S_E/\Z_p \equiv ( L(p,1),\xi_{\rm std})$.
\end{theorem}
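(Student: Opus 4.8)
The plan is to transfer the problem to the quotient $S_E/\Z_p$, invoke Theorem~\ref{teo:lp} there, and then lift the resulting structure back to $S_E$.

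\emph{Reducing to the lens space.} Since $S_E$ is starshaped, the radial rescaling $\Phi\colon S^3\to S_E$, $\Phi(x)=r(x)\,x$, is a diffeomorphism pulling $\lambda=\lambda_0|_{S_E}$ back to a contact form defining $\xi_{\rm std}$ on $S^3$; and since $H$ is $\Z_p$-invariant under $g_{p,1}$, so is $r$, hence $\Phi$ is $\Z_p$-equivariant. Thus $g_{p,1}$ acts freely on $S_E$, the quotient $\bar S_E:=S_E/\Z_p$ is diffeomorphic to $L(p,1)$, and because $\lambda_0$ is $\Z_p$-invariant, $\lambda$ descends to a contact form $\bar\lambda$ on $\bar S_E$ with $\ker\bar\lambda=\xi_{\rm std}$; writing $\pi\colon S_E\to\bar S_E$ for the covering, $X_\lambda$ is $\pi$-related to $X_{\bar\lambda}$. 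I would then check that $\bar\lambda$ is dynamically convex: the $c_1$ condition is vacuous since $\pi_2(L(p,1))=0$, and if $\bar P=(\bar x,T)\in\P(\bar\lambda)$ is contractible, lifting $\bar x$ to a Reeb trajectory $x$ of $\lambda$ one finds $x(t+T)=g_{p,1}^k(x(t))$ for a fixed $k$, contractibility forcing $g_{p,1}^k=\mathrm{id}$, so that $x$ is $T$-periodic; lifting a capping disk together with a symplectic trivialization then gives $\mu_{CZ}(\bar P)=\mu_{CZ}(x,T)\geq3$ by dynamical convexity of $\lambda$.

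\emph{Applying Theorem~\ref{teo:lp} and lifting.} Theorem~\ref{teo:lp} applied to $\bar\lambda$ yields a $p$-unknotted closed Reeb orbit $\bar P_0$, the binding of a rational open book $(\pi_{\rm ob},\bar P_0)$ whose disk-like pages are rational global surfaces of section, with $\mu_{CZ}(\bar P_0^{\,p})=3$. Since the pages are disks, $\bar S_E\setminus\bar P_0$ is an open solid torus; its connected $p$-fold cover $S_E\setminus\pi^{-1}(\bar P_0)$ is then again an open solid torus, so $P_0:=\pi^{-1}(\bar P_0)$ is a single circle, unknotted in $S_E\cong S^3$, $\Z_p$-invariant, and, being $\pi$-related to $\bar P_0$, a closed Reeb orbit of $\lambda$ covering $\bar P_0$ exactly $p$ times; lifting a capping disk for $\bar P_0^{\,p}$ gives $\mu_{CZ}(P_0)=\mu_{CZ}(\bar P_0^{\,p})=3$. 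To lift the open book, I would note that $(\pi_{\rm ob}\circ\pi)_*$ carries $\pi_1(S_E\setminus P_0)\cong\Z$ onto the index-$p$ subgroup $p\Z\subset\Z=\pi_1(S^1)$ (as $\pi_*$ has index $p$ and $\pi_{{\rm ob},*}$ is an isomorphism), so $\pi_{\rm ob}\circ\pi$ admits a $p$-th root $\tilde\pi_{\rm ob}\colon S_E\setminus P_0\to S^1$ with $(\tilde\pi_{\rm ob})^p=\pi_{\rm ob}\circ\pi$. This $\tilde\pi_{\rm ob}$ is a fibration with connected open-disk fibers, and lifting a $p$-disk for $\bar P_0$ shows that the closure of each fiber is the image of a $1$-disk for $P_0$; hence $(\tilde\pi_{\rm ob},P_0)$ is an open book decomposition of $S_E$ with disk-like pages, by construction the lift of $(\pi_{\rm ob},\bar P_0)$.

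\emph{The pages are global surfaces of section.} Finally, a Reeb trajectory $\gamma$ of $\lambda$ in $S_E\setminus P_0$ projects to a Reeb trajectory $\bar\gamma$ of $\bar\lambda$ which meets a fixed page $F$ of $\pi_{\rm ob}$ infinitely often in past and future, going once around the open book $\pi_{\rm ob}$ between consecutive hits; since $\pi^{-1}(F)$ is the disjoint union of $p$ pages of $\tilde\pi_{\rm ob}$ and a degree-one loop downstairs lifts to a path advancing by $2\pi/p$ in $\tilde\pi_{\rm ob}$, the trajectory $\gamma$ cycles through all $p$ lifted pages, hitting each — in particular any prescribed one — infinitely often in the past and in the future. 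This proves all assertions. The step I expect to require the most care is the lifting of the open book: verifying that the rational open book on $L(p,1)$ possesses a genuine $p$-th-root open book upstairs, and that its pages, which register only every $p$-th crossing of $\pi^{-1}(F)$, remain global surfaces of section — this is where the covering-space bookkeeping (connectedness of $P_0$, the solid-torus structure of the binding complement, and the cyclic distribution of returns among the $p$ lifted pages) is concentrated.
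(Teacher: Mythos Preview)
Your proof is correct and follows precisely the route the paper intends: the paper states Theorem~\ref{teo:aplic_lp} as a direct corollary of Theorem~\ref{teo:lp} without giving an explicit argument, and you have supplied exactly the details implicit in that reduction --- descending the dynamically convex contact form to $(L(p,1),\xi_{\rm std})$, applying Theorem~\ref{teo:lp} there, and lifting the resulting rational open book back to $S_E\cong S^3$. Your covering-space bookkeeping (connectedness of $\pi^{-1}(\bar P_0)$ via the solid-torus complement, the $p$-th root of $\pi_{\rm ob}\circ\pi$, and the cyclic distribution of returns among the $p$ lifted pages) is sound and handles precisely the points the paper leaves to the reader.
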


\subsection{An example}Consider the decoupled Hamiltonian
\begin{equation*}
H=\frac{x_{2}^{2}+y_{2}^{2}}{2} + \frac{x_{1}^{2}+y_{1}^{2}}{2}+2(x_{1}^{2}+y_{1}^{2})(y_{1}x_1-
 x_{1}y_1)-4(x_{1}^{6}-3x_{1}^{4}y_{1}^{2}-3x_{1}^{2}y_{1}^{4}+y_{1}^{6}).
\end{equation*}
The origin $0\in \R^4$ is a nondegenerate local minimum of $H$. Hence for every $E>0$ sufficiently small, $H^{-1}(E)$ contains a strictly convex sphere-like subset $S_E$ close to $0\in \R^4$.

The Hamiltonian $H$ is invariant under the $\Z_4$-action generated by $g_{4,1}$, see \eqref{contactomorphism}.
Since $g_{4,1}^* \lambda_0 = \lambda_0$ the Hamiltonian flow on $S_E$ descends to a dynamically convex Reeb flow on $(L(4,1), \mathcal{\xi}_{\rm std})$. Theorem \ref{teo:aplic_lp} implies the existence of a rational open book decomposition adapted to the Reeb flow on $S_E / \Z_4$.

\begin{figure}
    \includegraphics[width=0.6\textwidth]{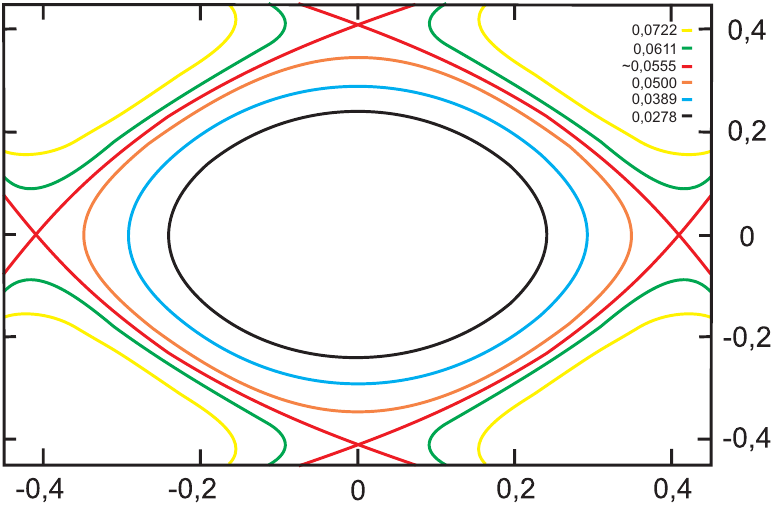}
    \caption{Projections of energy levels of $H$ to the $x_1y_1$-plane.}
    \label{fig:hill_proj}
\end{figure}

\section{The H\'enon-Heiles Hamiltonian}In 1964 H\'enon and Heiles \cite{henon1964applicability} studied the motion of a star in a galactic center with an axes of symmetry. They considered the following Hamiltonian
\begin{equation}\label{eq:henon}
H(x_{1}, x_{2}, y_{1}, y_{2})=\frac{y_{1}^{2}+y_{2}^{2}}{2}+V(x_{1}, x_{2}),
\end{equation}
where the  potential is the degree $3$ polynomial function
\begin{equation*}
V(x_1,x_2)=\frac{x_{1}^{2}+x_{2}^{2}}{2}+x_{1}^{2}x_{2}-\frac{x_{2}^ {3}}{3}.
\end{equation*}
Despite the simplicity of this Hamiltonian, its dynamics is very complex, having chaotic behavior and even multiple horseshoes  \cite{arioli2001symbolic,churchill1980pathology,ragazzo1994nonintegrability}. 

The Hamiltonian $H$ is invariant under the $\Z_3$-action on $\R^4$ generated by $\widehat g_{3,1}:\C^2 \to \C^2$
\begin{equation*}
(w_1:=x_1+ix_2, w_2:=y_1+iy_2) \mapsto \widehat  g_{3,1}(w_1,w_2)=\left(e^{\frac{2 \pi i}{3}}w_1,e^{\frac{2 \pi i}{3}}w_2 \right).
\end{equation*} Notice that $\widehat g_{3,1}$ differs from $g_{3,1}$ defined as in \eqref{contactomorphism}. Indeed  each $\C$-factor in the above definition is a Lagrangian subspace.
Despite this difference to $g_{3,1}$, the map $\widehat g_{3,1}$ also preserves $\lambda_0$. Although the quotient space $S^3/\Z_3$ is diffeomorphic to $L(3,1)$, the induced contact manifold $(S^3/\Z_3,\widehat \xi)$  is not contactomorphic to $(L(3,1),\xi_{\rm std})$. Indeed, one easily verifies that the orthogonal map $\psi:\R^4 \to \R^4$ defined by
$$
\psi:(x_1,x_2,y_1,y_2)= \frac{1}{\sqrt{2}}(x_1-y_2,-x_1-y_2,x_2+y_1,x_2-y_1),
$$ satisfies 
$$
\psi(S^3)=S^3, \ \ \ \psi^* \lambda_0 = \lambda_0 \  \ \mbox{ and } \ \ \psi \circ \widehat g_{3,1} = g_{3,2} \circ \psi.
$$Therefore,  $(S^3/\Z_3,\widehat \xi)$ is contactomorphic to $(L(3,2),\xi_{\rm std})$. 

For every $0< E <\frac{1}{6}$, the energy level $H^{-1}(E)$ contains a sphere-like component $S_E$. As we shall see below, $S_E$ is strictly convex  and hence the Hamiltonian flow on $S_E$ descends to a dynamically convex Reeb flow on $(L(3,2),\xi_{\rm std})$. Although  Theorem \ref{teo:aplic_lp} does not apply to the H\'enon-Heiles system, the rational open book decomposition on $S_{E}/\mathbb{Z}_{3}$ still exists as we shall explain below.

First we show that $S_E$ is strictly convex and, in particular, it is dynamically convex. In order to prove this fact we use a simple criterium found in \cite{salomao2004convex}.

\begin{theorem}[Salom\~ao \cite{salomao2004convex}] \label{teo:curv_positiva}Let $H=\frac{y_1^2+y_2^2}{2}+V(x_1,x_2)$ be a Hamiltonian function on $\R^4$ with smooth potential $V:\R^2 \to \R$.
Let $S_E\subset H^{-1}(E)$ be a sphere-like regular component of its energy level. Denote by $\pi:\R^4 \to \R^2$ the natural projection to the $x_1x_2$-plane.  Let $B_{E}=\pi(S_E)$. Then $S_{E}$ is strictly convex if and only if the function
\begin{equation*}
G_{E}:=2(E-V)(V_{x_{1}x_{1}}V_{x_{2}x_{2}}-V_{x_{1}x_{2}}^{2})+V_{x_{1}x_{1}}V_{x_{2}}^{2}+
V_{x_{2}x_{2}}V_{x_{1}}^{2}-2V_{x_{1}}V_{x_{2}}V_{x_{1}x_{2}}
\end{equation*}
is positive on $B_{E}$.
\end{theorem}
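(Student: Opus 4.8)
The plan is to translate the strict convexity of $S_E$ into a pointwise positivity condition on the Hessian of the defining function $H-E$ and then to recognize that condition as $G_E>0$. Recall the standard characterization: a compact regular level set $S=F^{-1}(0)$ bounding a domain $\Omega$ on which $F<0$ is strictly convex if and only if $\mathrm{Hess}\,F_p$ is positive definite on $T_pS=\ker dF_p$ for every $p\in S$, and this does not depend on the choice of $F$, since replacing $F$ by $gF$ with $g>0$ merely rescales $\mathrm{Hess}\,F_p|_{T_pS}$ by the positive factor $g(p)$ (the remaining terms vanish because $F(p)=0$ and $dF_p$ restricts to zero on $T_pS$). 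I would apply this with $F=H-E$, taking $\Omega$ to be the bounded component of $\{H<E\}$ whose boundary is $S_E$. In the coordinates $(x_1,x_2,y_1,y_2)$ the Hessian is block-diagonal, $\mathrm{Hess}\,H=\mathrm{diag}(\mathrm{Hess}\,V,\mathrm{Id}_2)$, and at $p=(x,y)\in S_E$ one has $T_pS_E=\{(\delta x,\delta y):\nabla V(x)\cdot\delta x+y\cdot\delta y=0\}$. Thus $S_E$ is strictly convex if and only if
\[
\mathrm{Hess}\,V(x)[\delta x,\delta x]+|\delta y|^2>0
\]
for every $p=(x,y)\in S_E$ and every nonzero $(\delta x,\delta y)$ with $\nabla V(x)\cdot\delta x+y\cdot\delta y=0$.

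The next step is to eliminate $\delta y$. Along $S_E$ one has $|y|^2=2(E-V(x))$, which is positive on $\mathrm{int}(B_E)$ and vanishes on $\partial B_E$, where moreover $\nabla V\neq0$ because $E$ is a regular value. If $x\in\mathrm{int}(B_E)$ (so $y\neq0$), then for fixed $\delta x$ the minimum of $|\delta y|^2$ subject to $y\cdot\delta y=-\nabla V(x)\cdot\delta x$ equals $(\nabla V(x)\cdot\delta x)^2/|y|^2$, so the displayed inequality holds for all admissible nonzero $(\delta x,\delta y)$ if and only if the quadratic form on $\R^2$
\[
q_x(\delta x):=\mathrm{Hess}\,V(x)[\delta x,\delta x]+\frac{(\nabla V(x)\cdot\delta x)^2}{2(E-V(x))}
\]
is positive definite. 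If $x\in\partial B_E$ (so $y=0$) the constraint becomes $\nabla V(x)\cdot\delta x=0$ with $\delta y$ free, and the condition reduces to positivity of $\mathrm{Hess}\,V(x)$ on the line $\nabla V(x)^{\perp}$, that is, $V_{x_1x_1}V_{x_2}^2+V_{x_2x_2}V_{x_1}^2-2V_{x_1}V_{x_2}V_{x_1x_2}>0$; this is exactly $G_E(x)>0$, since there the first summand of $G_E$ carries the vanishing factor $E-V$. A direct expansion of the $2\times2$ determinant (the terms with $(E-V)^{-2}$ cancel) gives
\[
\det q_x=\frac{G_E(x)}{2(E-V(x))},
\]
so on $\mathrm{int}(B_E)$ one has $\det q_x>0\iff G_E(x)>0$. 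This already settles the forward implication: if $S_E$ is strictly convex then $q_x$ is positive definite, hence $G_E>0$, on $\mathrm{int}(B_E)$, while the boundary analysis gives $G_E>0$ on $\partial B_E$; since $B_E=\pi(S_E)$, it follows that $G_E>0$ on all of $B_E$.

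For the converse the subtle point — which I expect to be the main obstacle — is that $\det q_x>0$ does not by itself force $q_x$ to be positive definite rather than negative definite. Here I would use that $S_E$ is sphere-like, so that $B_E$ is a closed topological $2$-disk and $\mathrm{int}(B_E)$ is connected: assuming $G_E>0$, we get $\det q_x>0$ on all of $\mathrm{int}(B_E)$, so the two eigenvalues of $q_x$ have a common sign, and that common sign is locally constant in $x$, hence constant on the connected set $\mathrm{int}(B_E)$. It then suffices to exhibit one point where $q_x$ is positive definite, and this is forced near the boundary: as $x\to\partial B_E$ one has $E-V(x)\to0^+$ with $\nabla V(x)\neq0$, so
\[
\mathrm{tr}\,q_x=\Delta V(x)+\frac{|\nabla V(x)|^2}{2(E-V(x))}\longrightarrow+\infty,
\]
which, together with $\det q_x>0$, makes $q_x$ positive definite on a collar of $\partial B_E$ inside $\mathrm{int}(B_E)$. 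By connectedness $q_x$ is positive definite throughout $\mathrm{int}(B_E)$, and combined with the boundary case (where $G_E>0$ is literally the required inequality) we obtain that $\mathrm{Hess}\,H$ is positive definite on $T_pS_E$ for every $p\in S_E$, i.e.\ $S_E$ is strictly convex. Apart from the determinant expansion and the elementary constrained minimization of $|\delta y|^2$, no further computation is needed.
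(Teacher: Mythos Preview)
The paper does not prove this theorem; it is quoted from \cite{salomao2004convex} and used as a black box to verify convexity of the H\'enon--Heiles levels. So there is no ``paper's own proof'' to compare against.

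That said, your argument is correct and self-contained. The reduction to positive definiteness of the rank-one perturbation $q_x=\mathrm{Hess}\,V+\tfrac{1}{2(E-V)}\nabla V\,\nabla V^{T}$ is the natural one, and the determinant computation $\det q_x=G_E/(2(E-V))$ is exactly the matrix determinant lemma (the $\alpha^2$ cancellation you mention). Your treatment of the boundary case and of the sign ambiguity in the converse is clean: the connectedness of $\mathrm{int}(B_E)$ follows, for instance, from the fact that $\pi^{-1}(\partial B_E)\cong\partial B_E$ sits as a codimension-$2$ submanifold in the connected $3$-sphere $S_E$, so its complement is connected and projects onto $\mathrm{int}(B_E)$; the trace blow-up near $\partial B_E$ then pins down the sign. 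Nothing is missing.
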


\begin{figure}
    \includegraphics[width=0.6\textwidth]{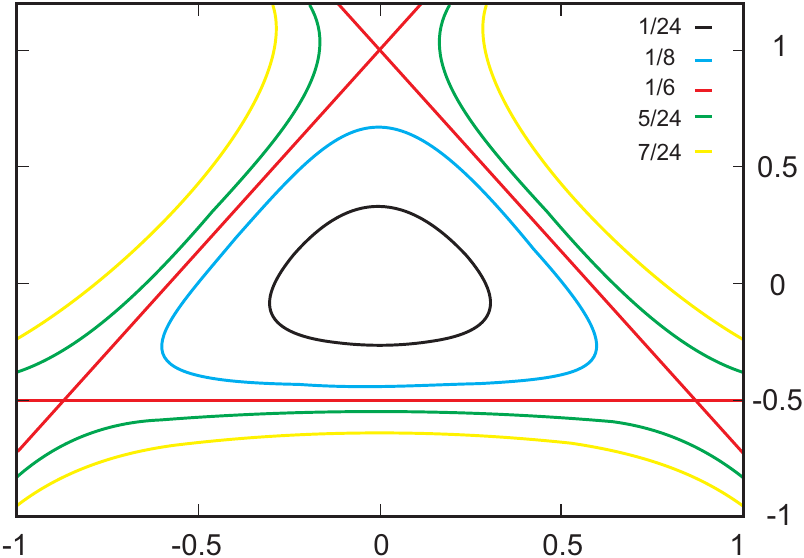}
    \caption{The boundary of Hill's region of the H\'enon-Heiles Hamiltonian for various values of energy.}
    \label{fig:hh_proj}
\end{figure}

We use Theorem \ref{teo:curv_positiva} to prove that $S_E$ is strictly convex.

\begin{theorem}\label{teo:henon_dinami_convexo}
For every $0<E<\frac{1}{6}$, $S_E$ is a strictly convex hypersurface. In particular, the Hamiltonian flow on $S_E$ is dynamically convex.
\end{theorem}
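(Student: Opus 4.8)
The plan is to apply the criterion of Theorem~\ref{teo:curv_positiva}: since $B_E=\pi(S_E)$ is the component of $\{V\le E\}$ containing the origin, it suffices to show that $G_E>0$ on $B_E$. First I would compute the partial derivatives of the H\'enon--Heiles potential, namely $V_{x_1}=x_1(1+2x_2)$, $V_{x_2}=x_2+x_1^2-x_2^2$, $V_{x_1x_1}=1+2x_2$, $V_{x_2x_2}=1-2x_2$, $V_{x_1x_2}=2x_1$, substitute into the defining expression for $G_E$, and simplify. Completing the square in the part of $G_E$ that is quadratic in $(V_{x_1},V_{x_2})$ and using $V(x_1,x_2)-V(0,x_2)=\tfrac12(1+2x_2)x_1^2$, the expression should collapse to the compact form
\begin{equation*}
G_E=2\bigl(E-V(0,x_2)\bigr)\bigl(1-4x_1^2-4x_2^2\bigr)+(1+2x_2)\bigl(x_2-x_1^2-x_2^2\bigr)^2 .
\end{equation*}
Obtaining this identity is the first real step, and the rest of the argument is organized around it.

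Next I would record the geometry of $B_E$ that makes the two summands usable. Since $V(x_1,-\tfrac12)\equiv\tfrac16$, the line $\{x_2=-\tfrac12\}$ is contained in the regular level set $\{V=\tfrac16\}$, so for $E<\tfrac16$ the connected set $B_E$, which contains the origin, must lie in $\{x_2>-\tfrac12\}$; in particular $1+2x_2>0$ on $B_E$. Because $\tfrac{d}{ds}V(sx_1,x_2)=sx_1^2(1+2x_2)\ge 0$ on $B_E$, the horizontal segment from any $(x_1,x_2)\in B_E$ to $(0,x_2)$ stays in $B_E$, whence $V(0,x_2)\le E$ and $A:=2\bigl(E-V(0,x_2)\bigr)\ge 0$ on $B_E$. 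Finally, since $V(0,\cdot)$ decreases on $(-\tfrac12,0]$, increases on $[0,1)$ and equals $\tfrac16$ at $-\tfrac12$ and at $1$, the same segment argument gives $B_E\subset\{-\tfrac12<x_2<1\}$.

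For the estimate itself I would slice $B_E$ by the horizontal lines $\{x_2=\text{const}\}$. At height $x_2$ the slice is $\{x_1^2\le T\}$ with $T:=A/(1+2x_2)\ge 0$, the equality $x_1^2=T$ holding precisely on $\partial B_E$. Writing $t=x_1^2$ and $w=x_2-x_2^2$, the identity above becomes a quadratic
\begin{equation*}
g(t)=(1+2x_2)(w-t)^2+A(1-4x_2^2)-4At ,
\end{equation*}
opening upward, with vertex $t^\ast=w+2A/(1+2x_2)\ge w$. I would bound $\min_{[0,T]}g$ by the position of $t^\ast$. (i) If $t^\ast\le 0$ then $w\le 0$, hence $x_2\le 0$, and necessarily $x_2<0$ (else $A=0$ forces $E=0$); the minimum is $g(0)=(1+2x_2)w^2+A(1-4x_2^2)>0$ since $w\ne 0$ and $|x_2|<\tfrac12$. (ii) If $0<t^\ast<T$ then $w<0$, so again $x_2<0$, and $A>0$; a direct computation gives $g(t^\ast)=A\bigl(1-4x_2-\tfrac{4A}{1+2x_2}\bigr)$, and $t^\ast<T$ forces $\tfrac{4A}{1+2x_2}<4(x_2^2-x_2)$, whence $g(t^\ast)>A(1-4x_2^2)>0$. (iii) If $t^\ast\ge T$ the minimum is the boundary value $g(T)$, which a short computation identifies with
\begin{equation*}
\widetilde g(A):=-\frac{3A^2}{1+2x_2}+(1-2x_2-2x_2^2)A+(1+2x_2)x_2^2(1-x_2)^2 .
\end{equation*}

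Case (iii) is the crux, and the only place where the threshold $\tfrac16$ enters sharply. The map $A\mapsto\widetilde g(A)$ is a downward parabola; I would check the two values $\widetilde g(0)=(1+2x_2)x_2^2(1-x_2)^2\ge 0$ and, crucially, $\widetilde g(M)=0$, where $M:=2\bigl(\tfrac16-V(0,x_2)\bigr)=\tfrac13(1-x_2)^2(1+2x_2)$ --- the latter being a one-line polynomial identity. Since $0\le A<M$ on $B_E$ exactly because $0<E<\tfrac16$, concavity gives $\widetilde g(A)\ge\tfrac{M-A}{M}\,\widetilde g(0)>0$ whenever $\widetilde g(0)>0$, i.e. whenever $x_2\ne 0$ (recall $x_2\ne 1$ on $B_E$); the single residual value $x_2=0$ is handled by hand, where $\widetilde g(2E)=2E(1-6E)>0$. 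Combining (i)--(iii) gives $G_E>0$ on $B_E$, so $S_E$ is strictly convex by Theorem~\ref{teo:curv_positiva}, and dynamical convexity then follows from the result of Hofer, Wysocki and Zehnder \cite{hofer1998dynamics} that strictly convex hypersurfaces in $\R^4$ are dynamically convex. I expect the extraction of the clean form of $G_E$ in the first step and the bookkeeping of case (iii) to carry essentially all of the content.
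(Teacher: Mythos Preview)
Your argument is correct, but it takes a substantially more laborious route than the paper's. You and the author arrive at different algebraic normal forms for $G_E$: you obtain
\[
G_E=2\bigl(E-V(0,x_2)\bigr)(1-4x_1^2-4x_2^2)+(1+2x_2)(x_2-x_1^2-x_2^2)^2,
\]
whereas the paper finds the cleaner identity
\[
G_E=2(E-V)(1-4x_1^2-4x_2^2)+(1-6V)(x_1^2+x_2^2).
\]
From this second form the positivity is almost immediate: one notes that $B_E$ lies in the interior of the triangle $B_{1/6}$ (hence inside the open unit disk), uses $1>6E$ to bound the second summand from below by $6(E-V)(x_1^2+x_2^2)$, and the two terms combine to $2(E-V)(1-x_1^2-x_2^2)\ge 0$, with strictness handled in one line. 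No case analysis, no quadratic in $t=x_1^2$, no boundary parabola $\widetilde g$.

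Your normal form has the virtue that the second summand is manifestly a nonnegative square, but the price is that the first factor $1-4x_1^2-4x_2^2$ changes sign on $B_E$, forcing the three-case minimization you carry out. The payoff of the paper's identity is that the potentially sign-indefinite piece is absorbed into a term proportional to $E-V$, which is nonnegative on $B_E$ by definition. Both arguments need the geometric confinement $B_E\subset\{-\tfrac12<x_2<1\}$; yours additionally makes explicit the horizontal-slice convexity of $B_E$, which is pleasant but not needed in the paper's version. If you want to shorten your write-up, substituting $V-V(0,x_2)=\tfrac12(1+2x_2)x_1^2$ back into your identity and regrouping recovers the paper's form directly.
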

\begin{proof}
A direct computation shows that
\begin{equation*}
G_{E}=2(E-V)\left(1-4x_{1}^{2}-4x_{2}^{2}\right)+I,
\end{equation*}
where
\begin{equation}\label{eq:bordo}
I:=x_{2}^{2}-3x_{2}^{4}+2x_{2}^{5}+x_{1}^{2}\left(1-6x_{2}^{2}-4x_{2}^{3}+
x_{1}^{2}(-3-6x_{2})\right).
\end{equation}

For every $0<E<\frac{1}{6}$,  $B_{E}$ is contained in the interior of the triangle $B_{\frac{1}{6}}:=\pi(S_{\frac{1}{6}})$,  whose vertices are the points
\begin{equation*}
\left(-\frac{\sqrt{3}}{2},\  -\frac{1}{2}\right), \ (0, 1) \ \mbox{ and } \ \left(\frac{\sqrt{3}}{2}, -\frac{1}{2}\right),
\end{equation*}
see Figure \ref{fig:hh_proj}. In particular, every point $(x_1,x_2) \in B_E$ satisfies
\begin{equation}\label{circ}
x_1^2 + x_2^2 < 1.
\end{equation}
Since $6V = 3x_1^2+3x_2^2 + 6x_1^2x_2 -2 x_2^3$, we have
\begin{equation}\label{b1}
x_1^2 (3+6x_2)=6V-3x_2^2+2x_2^3.
\end{equation}
This implies
\begin{equation*}
\begin{aligned}
I & = x_2^2-3x_2^4+2x_2^5 +x_1^2(1-6x_2^2-4x_2^3 - 6V+3x_2^2-2x_2^3)\\
& = x_2^2-3x_2^4+2x_2^5 +x_1^2(1-6V-3x_2^2-6x_2^3)\\
& = x_2^2 -3x_2^4+2x_2^5+x_1^2(1-6V) -6Vx_2^2+3x_2^4-2x_ 2^5\\
& = (1-6V)(x_1^2+x_2^2).
\end{aligned}
\end{equation*}
Using \eqref{circ} and the inequality $1>6E$ we obtain
\begin{equation*}
\begin{aligned}
G_E & =2(E-V)(1-4x_1^2-4x_2^2) +(1-6V)(x_1^2+x_2^2)\\
& \geq  2(E-V)(1-4x_1^2-4x_2^2) +(6E-6V)(x_1^2+x_2^2)\\
& = 2(E-V)(1-x_1^2-x_2^2)\\
& \geq 0.
\end{aligned}
\end{equation*}
Observe that the first inequality above is strict if $(x_1,x_2)\neq (0,0)$. Since $V(0,0)=0 < E$, the last inequality above is strict if $(x_1,x_2)=(0,0)$. We conclude that $G_E>0$ on $B_E$, as desired. An application of Theorem \ref{teo:curv_positiva} finishes the proof of this theorem.
\end{proof}

Theorem \ref{teo:henon_dinami_convexo} implies that the Hamiltonian flow restricted to $S_{E}$ is dynamically convex and descends to a dynamically convex Reeb flow on $S_{E}/\mathbb{Z}_{3}\equiv (L(3, 2), \mathcal{\xi}_{\rm std})$.

In \cite{churchill1979survey}, Churchill, Pecelli and Rod explain how one can use a shooting argument to obtain at least $8$ periodic orbits on $S_E$, denoted $\Pi_i,i=1\ldots8$. See Figure \ref{fig:hh_proj_simet} for their projections to the $x_1x_2$-plane. The closed orbits $\Pi_7$ and $\Pi_8$ are $\Z_3$-symmetric and project to simple closed curves on the $x_1x_2$-plane. Hence both descend to $3$-unknotted closed Reeb orbits $\widehat \Pi_7$ and $\widehat \Pi_8$ on $L(3,2)$. Moreover, their rational self-linking numbers are $-1/3$.

\begin{figure}
    \includegraphics[width=0.5\textwidth]{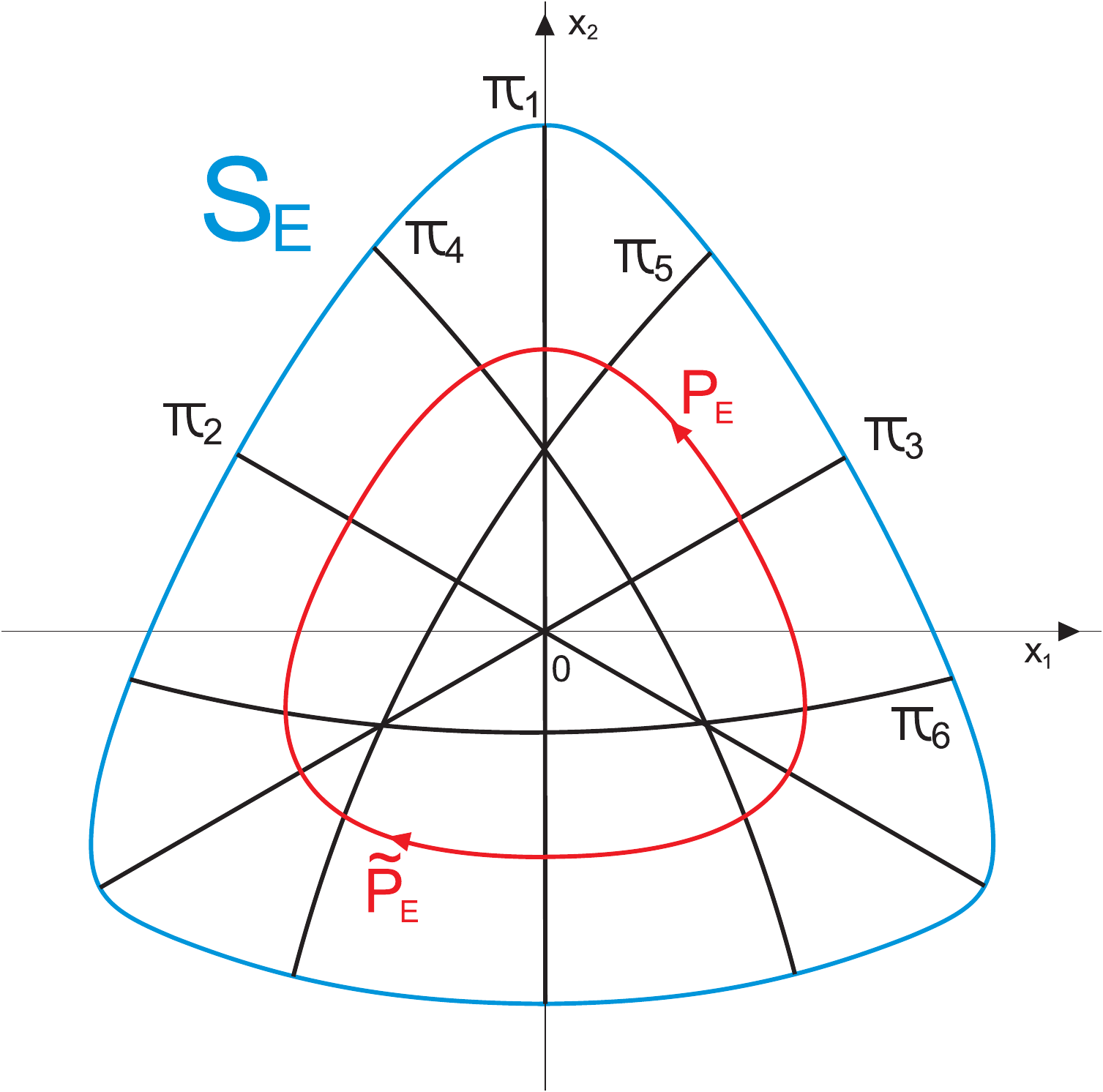}
    \caption{The projections of the $8$ periodic orbits on $S_{E}$.}
    \label{fig:hh_proj_simet}
\end{figure}

The following theorem proved by Hryniewicz and Salom\~ao guarantees that each $\widehat \Pi_i, i=7,8$ is the binding of a rational open book decomposition whose pages are rational disk-like global surfaces of section for the Reeb flow on $S_E/\Z_3 \equiv (L(3,2),\xi_{\rm std})$.

\begin{theorem}[Corollary 1.8, \cite{hryniewicz2016elliptic}] \label{theo:existence_p_disk}
If $\lambda$ is a dynamically convex contact form on $(L(p,q),\xi_{\rm std})$ then every $p$-unknotted closed Reeb orbit $K$ of $\lambda$, which has rational self-linking number $-1/p$, is the bound of a $p$-disk which is a rational global surface of section for the Reeb flow. Moreover, this $p$-disk is a page of a rational open book decomposition of $L(p, q)$ with binding $K$ such that every page is a rational disk-like global surfaces of section.
\end{theorem}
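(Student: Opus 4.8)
The plan is to deduce Theorem~\ref{theo:existence_p_disk} from the analogous statement on the tight $3$-sphere by working on the $\mathbb{Z}_p$-cover $\rho\colon S^3\to L(p,q)$ induced by $g_{p,q}$, carrying out the pseudoholomorphic-curve construction there $\mathbb{Z}_p$-equivariantly, and then pushing the resulting open book down to $L(p,q)$. First I would lift the data. Set $\tilde\lambda:=\rho^*\lambda$, a $\mathbb{Z}_p$-invariant contact form on $S^3$ with $\ker\tilde\lambda=\xi_{\rm std}$. I claim $\tilde\lambda$ is again dynamically convex: a closed Reeb orbit $\tilde P$ of $\tilde\lambda$ is automatically contractible in $S^3$, it projects under $\rho$ to a closed Reeb orbit $P$ of $\lambda$ some iterate $P^m$ of which is contractible in $L(p,q)$, and $\mu_{CZ}(\tilde P)$ equals the Conley--Zehnder index of $P^m$ computed in a capping disk; both indices are well defined because $c_1$ of the contact structure vanishes on $\pi_2$ of both manifolds, and dynamical convexity of $\lambda$ then gives $\mu_{CZ}(\tilde P)\geq 3$. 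Second, since $K$ is $p$-unknotted, a $p$-disk $u\colon\mathbb{D}\to L(p,q)$ for $K$ lifts through $\rho$ to an immersion $\tilde u\colon\mathbb{D}\to S^3$ which is an embedding on $\mathbb{D}\setminus\partial\mathbb{D}$ and restricts to a diffeomorphism from $\partial\mathbb{D}$ onto $\tilde K:=\rho^{-1}(K)$; hence $\tilde K$ is a connected, $\mathbb{Z}_p$-invariant, unknotted closed Reeb orbit of $\tilde\lambda$, the covering $\rho|_{\tilde K}\colon\tilde K\to K$ is $p$-to-$1$, and by naturality of the rational self-linking number under covers \cite{baker2012rational} one obtains $\sl(\tilde K,\tilde u)=p\cdot\sl(K,u)=-1$.

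Next I would invoke the tight-sphere theory. The contact form $\tilde\lambda$ is dynamically convex on $(S^3,\xi_{\rm std})$ and $\tilde K$ is an unknotted closed Reeb orbit with $\sl(\tilde K)=-1$; as every closed orbit of $\tilde\lambda$ has $\mu_{CZ}\geq 3$, the hypotheses needed to produce a disk-like global surface of section bounded by a prescribed closed Reeb orbit are met, so by \cite{hofer1996characterisation,hryniewicz2012fast,hs_ontheexistenceofdisk2011}, running the construction of \cite{hofer1998dynamics} --- the Bishop family of fast finite-energy planes asymptotic to $\tilde K$ together with the SFT-type compactness and the Fredholm and transversality analysis --- one gets an open book decomposition $\tilde\pi\colon S^3\setminus\tilde K\to S^1$ with binding $\tilde K$ whose pages are disk-like global surfaces of section for the Reeb flow of $\tilde\lambda$. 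The essential point is to do this equivariantly: choosing the auxiliary $d\tilde\lambda$-compatible almost complex structure on $\mathbb{R}\times S^3$ to be $\mathbb{Z}_p$-invariant and generic among such structures, the whole foliation one constructs is $\mathbb{Z}_p$-invariant, and by uniqueness of the resulting open book $\mathbb{Z}_p$ permutes its pages, acting on the base $S^1$ by a free rotation of order $p$; in particular no page is fixed by a nontrivial deck transformation.

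Finally I would descend. Quotienting by $\mathbb{Z}_p$, the projection $\tilde\pi$ induces a fibration $\pi\colon L(p,q)\setminus K\to S^1/\mathbb{Z}_p\cong S^1$; a generic fiber of $\pi$ is carried diffeomorphically by $\rho$ from a page of $\tilde\pi$, and upon taking closures this fiber is the image of a $p$-disk for $K$, the boundary wrapping $p$ times around $K$ because $\rho|_{\tilde K}$ is a $p$-fold cover, so $(\pi,K)$ is a rational open book decomposition of $L(p,q)$ with disk-like pages. Each page is a rational global surface of section, since a Reeb trajectory of $\lambda$ in $L(p,q)\setminus K$ lifts to one of $\tilde\lambda$ in $S^3\setminus\tilde K$ that meets every page of $\tilde\pi$ infinitely often in the past and the future, hence its projection meets every page of $\pi$ infinitely often; moreover the rational self-linking number of $K$ with respect to any of these $p$-disks equals $-1/p$, as required. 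I expect the main obstacle to be the equivariant step: verifying that the Fredholm theory and compactness arguments of \cite{hofer1998dynamics} survive the restriction to $\mathbb{Z}_p$-invariant almost complex structures, that transversality is still attainable within that class, and that the induced $\mathbb{Z}_p$-action on the base of the open book is indeed free --- so that no exceptional $\mathbb{Z}_p$-fixed curve occurs --- while simultaneously keeping careful track of the Conley--Zehnder indices and of the rational self-linking number under the identifications induced by $\rho$.
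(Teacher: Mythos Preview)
This statement is not proved in the paper: it is quoted as Corollary~1.8 of \cite{hryniewicz2016elliptic} and used as a black box, both to obtain Theorem~\ref{thm:2_orbitas} and to finish the degenerate case at the end of Section~\ref{dem_lp}. So there is no in-paper proof to compare your proposal to.

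For what it is worth, your equivariant-lift strategy is \emph{not} the route taken in \cite{hryniewicz2016elliptic}. That paper works directly in the symplectization $\mathbb{R}\times L(p,q)$, producing fast finite-energy planes asymptotic to the contractible iterate $K^p$ and showing their $L(p,q)$-projections are embedded $p$-disks for $K$. Several of the tools the present paper imports from \cite{hryniewicz2016elliptic} --- the notion of a relatively prime puncture (Definition~\ref{furos_primos}) and Lemma~\ref{mergulho} --- exist precisely to control the behaviour of a plane near a multiply covered asymptotic limit intrinsically on the lens space; they would be superfluous if one were merely descending an honest open book from $S^3$. The intrinsic approach buys you the result without ever confronting equivariant transversality; your approach, if it goes through, is conceptually cleaner but has exactly the technical cost you identify.

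Two points in your sketch need more care. First, the claim that $\tilde K=\rho^{-1}(K)$ is connected (equivalently, that $[K]$ has order exactly $p$ in $\pi_1(L(p,q))$) is not automatic from $K$ being $p$-unknotted with $\sl(K)=-1/p$; you should justify it. Second, your assertion that the induced $\mathbb{Z}_p$-action on the $S^1$ of pages is free deserves an argument: a nontrivial deck transformation fixing a page setwise would restrict to a fixed-point-free self-homeomorphism of a closed disk, contradicting Brouwer's fixed point theorem --- this is easy, but should be said.
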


The following theorem is a direct consequence of Theorems \ref{teo:henon_dinami_convexo} and \ref{theo:existence_p_disk}.

\begin{theorem}\label{thm:2_orbitas}
For every $0<E<\frac{1}{6}$, the Reeb flow on $S_E/\Z_3 \equiv (L(3,2),\xi_{\rm std})$ admits a rational open book decomposition with binding $\widehat \Pi_7$ and whose pages a rational disk-like global surfaces of section. A similar statement holds for $\widehat \Pi_8$.
\end{theorem}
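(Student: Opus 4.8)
The plan is to obtain this statement as a direct verification of the hypotheses of Theorem \ref{theo:existence_p_disk} with $p=3$ and $q=2$. Fix $E\in(0,1/6)$. By Theorem \ref{teo:henon_dinami_convexo} the component $S_E$ is strictly convex, hence the Hamiltonian flow on $S_E\cong S^3$ is dynamically convex: every closed Reeb orbit of $\lambda_0|_{S_E}$ has Conley-Zehnder index at least $3$.

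First I would transfer this to the quotient. Since $H$ is invariant under the free $\Z_3$-action generated by $\widehat g_{3,1}$ and $\widehat g_{3,1}^*\lambda_0=\lambda_0$, the Reeb flow descends to the Reeb flow of a contact form $\lambda$ on $S_E/\Z_3$, and by the computation with the orthogonal map $\psi$ already recorded above, $(S_E/\Z_3,\ker\lambda)$ is contactomorphic to $(L(3,2),\xi_{\rm std})$. To check that $\lambda$ is dynamically convex, note that $\pi_2(L(3,2))=\pi_2(S^3)=0$, so the condition $c_1(\ker\lambda)|_{\pi_2}=0$ holds vacuously, and let $P$ be a contractible closed Reeb orbit of $\lambda$. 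Contractibility means $P$ has trivial holonomy in $\pi_1(L(3,2))=\Z_3$, so $P$ lifts to an honest closed Reeb orbit $\widetilde P$ of $\lambda_0|_{S_E}$ (the lift closes up after a single turn), and a nullhomotopy of $P$ lifts to one of $\widetilde P$; since the covering map pulls $\ker\lambda$ back to $\ker\lambda_0|_{S_E}$ and intertwines the linearized Reeb flows, $\mu_{CZ}(\widetilde P)=\mu_{CZ}(P)$. By the previous paragraph $\mu_{CZ}(\widetilde P)\geq 3$, hence $\mu_{CZ}(P)\geq 3$; thus $\lambda$ is a dynamically convex contact form on $(L(3,2),\xi_{\rm std})$.

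It remains to feed in the binding orbits. The $\Z_3$-symmetric periodic orbits $\Pi_7$ and $\Pi_8$ on $S_E$, supplied by the shooting argument of Churchill, Pecelli and Rod, project to embedded curves in the $x_1x_2$-plane and therefore descend to closed Reeb orbits $\widehat\Pi_7,\widehat\Pi_8$ on $L(3,2)$ which are $3$-unknotted with rational self-linking number $-1/3$. Applying Theorem \ref{theo:existence_p_disk} with $p=3$, $q=2$ and $K=\widehat\Pi_i$, $i=7,8$, now produces a $3$-disk bounded by $\widehat\Pi_i$ that is a rational disk-like global surface of section and a page of a rational open book decomposition of $L(3,2)$ with binding $\widehat\Pi_i$, every page of which is a rational disk-like global surface of section. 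This is exactly the assertion of the theorem. The argument is essentially a check of hypotheses, so I do not expect a real obstacle; the one step worth a careful sentence is the descent of dynamical convexity, where it is important that a contractible orbit in $L(3,2)$ lifts to a genuine closed orbit in $S_E$ (rather than one closing up only after three iterates), so that the index estimate coming from strict convexity of the $3$-sphere $S_E$ applies.
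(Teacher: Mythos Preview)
Your proposal is correct and follows exactly the route the paper takes: the paper states Theorem~\ref{thm:2_orbitas} as a direct consequence of Theorems~\ref{teo:henon_dinami_convexo} and~\ref{theo:existence_p_disk}, using the facts recorded just before the statement that $\widehat\Pi_7,\widehat\Pi_8$ are $3$-unknotted with self-linking number $-1/3$. Your write-up simply makes explicit the descent of dynamical convexity to the quotient, which the paper asserts without detail.
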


Combining Theorem \ref{thm:2_orbitas} and a result of J. Franks in \cite{franks1992geodesics} we obtain the following corollary.

\begin{corollary}
For every $0<E<\frac{1}{6}$, $S_E$ admits infinitely many periodic orbits.
\end{corollary}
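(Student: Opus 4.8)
The plan is to combine Theorem~\ref{thm:2_orbitas} with Franks' theorem on periodic points of annulus homeomorphisms \cite{franks1992geodesics}, using the periodic orbits of Churchill, Pecelli and Rod \cite{churchill1979survey} to verify its hypotheses. Fix $0<E<\tfrac16$, write $\Sigma := S_E/\Z_3 \equiv (L(3,2),\xi_{\rm std})$ and let $\lambda$ be the induced contact form. By Theorem~\ref{thm:2_orbitas} the Reeb flow of $\lambda$ admits a rational disk-like global surface of section $u:\D\to\Sigma$ with binding $\widehat\Pi_7$; set $\mathring{\D}=\D\setminus\partial\D$ and let $\psi:\mathring{\D}\to\mathring{\D}$ be the first return map, transported to $\mathring{\D}$ via $u|_{\mathring{\D}}$. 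By the definition of a global surface of section $\psi$ is a well-defined orientation-preserving homeomorphism; it preserves the area form $\omega=u^*d\lambda$, which is positive on $\mathring{\D}$ with finite total area $\int_{\mathring{\D}}\omega=\int_{\partial\D}u^*\lambda<\infty$; and the closed Reeb orbits of $\lambda$ lying in $\Sigma\setminus\widehat\Pi_7$ are in bijection with the periodic orbits of $\psi$.

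First I would invoke the classical fact recalled in the introduction that a finite-area-preserving homeomorphism of the open disk has a fixed point $z_0\in\mathring{\D}$ (via Brouwer's plane translation theorem together with Poincaré recurrence); it produces a second closed Reeb orbit $P_1\subset\Sigma\setminus\widehat\Pi_7$, so in particular $P_1\neq\widehat\Pi_7$. Next I would exhibit a third closed Reeb orbit on $\Sigma$. The orbits $\Pi_1,\dots,\Pi_8$ of \cite{churchill1979survey} descend to closed Reeb orbits on $\Sigma$, and $\widehat\Pi_7,\widehat\Pi_8,\widehat\Pi_1$ are pairwise distinct: $\widehat\Pi_7\neq\widehat\Pi_8$ because $\Pi_7\neq\Pi_8$ are both $\Z_3$-invariant, and $\widehat\Pi_1$ differs from both because $\Pi_1$ is not a $\Z_3$-translate of $\Pi_7$ or $\Pi_8$. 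Since $\widehat\Pi_7$ is the binding and $P_1$ is a single orbit in $\Sigma\setminus\widehat\Pi_7$, at most one of $\widehat\Pi_8,\widehat\Pi_1$ can equal $P_1$; choose $Q\in\{\widehat\Pi_8,\widehat\Pi_1\}$ with $Q\neq\widehat\Pi_7$ and $Q\neq P_1$. Then $Q$ meets $\mathring{\D}$ in a periodic orbit $\mathcal O$ of $\psi$ with $\mathcal O\neq\{z_0\}$, and since $z_0$ is fixed this forces $z_0\notin\mathcal O$, so $\mathcal O\subset A:=\mathring{\D}\setminus\{z_0\}$, an open annulus.

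Finally I would apply Franks' theorem \cite{franks1992geodesics} to $\psi|_A$: it is an orientation-preserving homeomorphism of the open annulus that does not interchange the two ends (the puncture $z_0$ is fixed), it preserves the finite Borel measure $\omega|_A$ of full support, and it has the periodic orbit $\mathcal O$. Hence $\psi|_A$, and therefore $\psi$, has infinitely many periodic orbits, which correspond to infinitely many closed Reeb orbits on $\Sigma$. Pulling these back through the $3$-fold covering $S_E\to\Sigma$ — each closed Reeb orbit downstairs having a nonempty preimage that is a union of closed Reeb orbits upstairs, and distinct orbits downstairs having disjoint preimages — yields infinitely many periodic orbits of the Hamiltonian flow on $S_E$, as claimed.

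I expect the main obstacle to be ensuring the existence of a closed Reeb orbit on $\Sigma$ distinct \emph{both} from the binding $\widehat\Pi_7$ \emph{and} from the fixed-point orbit $P_1$: this is precisely the point at which one needs that some orbit from \cite{churchill1979survey} survives as a genuinely new orbit after the $\Z_3$-quotient, which rests on $\Pi_7,\Pi_8$ being $\Z_3$-invariant while at least one of the remaining orbits is not among their $\Z_3$-translates. The other ingredients — global definedness of $\psi$ on $\mathring{\D}$, checking the precise hypotheses of Franks' theorem for $\psi|_A$, and the elementary lifting step — are routine.
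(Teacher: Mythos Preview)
Your proposal is correct and follows exactly the route the paper sketches: Theorem~\ref{thm:2_orbitas} supplies the rational disk-like global section with binding $\widehat\Pi_7$, Brouwer gives a fixed point of the return map, the Churchill--Pecelli--Rod orbits provide a third closed Reeb orbit on $S_E/\Z_3$, and Franks' theorem on the punctured disk then yields infinitely many periodic points, which lift to $S_E$. The paper only records the one-line combination of Theorem~\ref{thm:2_orbitas} with \cite{franks1992geodesics}; you have spelled out the same argument in full detail.
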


\noindent\textbf{Acknowledgements.} I would like to thank my PhD advisor Pedro Salom\~ao for proposing this problem to me and for all his support and motivation. Without his help this work would probably not be done. I also thank the anonymous referee for pointing out a mistake in the application of our results to the H\'enon-Heiles system. AS was partially supported by CAPES grant 1526852 and CNPq grant 142059/2016-1.

\section{Preliminaries}\label{preliminares}
In this section we remind some definitions in contact and symplectic geometry such as Conley-Zehnder index, transverse rotation number and self-linking number. After that, we introduce pseudo-holomorphic curves in symplectizations and some related algebraic invariants.

\subsection{The Conley-Zehnder index}
Let $S:\R / \Z\rightarrow\mathbb{R}^{2\times 2}$ be a smooth path of $2 \times 2$ symmetric matrices. Identify $\R^{2}\simeq \C$, $\mathbb{R}^{2\times 2}\simeq\mathcal{L}_{\mathbb{R}}(\mathbb{C})$ and consider the unbounded self-adjoint operator
\begin{equation}\label{op:auto_adjunto}
L_{S}=-i\partial_{t}-S(t),
\end{equation}
defined on the Hilbert space $L^{2}(\mathbb{R}/\mathbb{Z}, \mathbb{C})$, with the structure induced by the euclidian inner product in $\mathbb{C}$. In \cite{properties_2}, Hofer, Wysocky and Zehnder present some important properties of the operator (\ref{op:auto_adjunto}). Its spectrum $\sigma(L_{S})$ consists of countably many real eigenvalues which accumulate only at $\pm\infty$.

For each $\eta\in\sigma(L_{S})$,  every non-trivial $\eta$-eigensection $e:\mathbb{R}/\mathbb{Z}\rightarrow\mathbb{C}$  never vanishes and so it has a well-defined  winding number
\begin{equation*}
\mathrm{wind}(\eta)=\frac{1}{2\pi}(\theta(1)-\theta(0))\in\mathbb{Z},
\end{equation*}
where $\theta:[0, 1]\rightarrow\mathbb{R}$ is any continuous function satisfying $e(t)\in\mathbb{R}^{+}e^{i\theta(t)}$. It can be shown that ${\rm wind}(\eta)$ does not depend on $e$. If $\eta_{1}\leq\eta_{2}\in \sigma(L_S)$, then $\mathrm{wind}(\eta_{1})\leq\mathrm{wind}(\eta_{2})$, and for each $k\in\mathbb{Z}$, there exist precisely two eigenvalues (multiplicities counted) whose winding number is $k$.

Let ${\rm Sp}(2)$ be the group of $2\times 2$ symplectic matrices and consider the set $\Sigma$ of smooth maps $\varphi:\mathbb{R}\rightarrow {\rm Sp}(2)$ starting at the identity and satisfying
\begin{equation*}
\varphi(t+1)=\varphi(t)\varphi(1), \ \  \forall t.
\end{equation*}
It follows that $S(t):=-i\varphi'(t)\varphi(t)^{-1}$ is a smooth path of $1$-periodic symmetric matrices and one can associate to the path $\varphi$ an unbounded self-adjoint operator $L_{S}$ as in (\ref{op:auto_adjunto}).

Consider the special eigenvalues
\begin{equation*}
\begin{aligned}
\eta^{<0}:=\mathrm{max}\{\eta\in\sigma(L_{S})|\eta<0\},\\
\eta^{\geq0}:=\mathrm{min}\{\eta\in\sigma(L_{S})|\eta\geq0\}.
\end{aligned}
\end{equation*}

\begin{definition}
The \textit{Conley-Zehnder index} of the path $\varphi$ is defined as
\begin{equation*}
\mu_{CZ}(\varphi):=\mathrm{wind}(\eta^{<0})+\mathrm{wind}(\eta^{\geq0}).
\end{equation*}
\end{definition}

Let $\Pi: E\rightarrow\mathbb{R}/\mathbb{Z}$ be an oriented real vector bundle satisfying $\mathrm{rank}_{\mathbb{R}}(E)=2$. Denote by $\Omega_{E}^{+}$ the set of homotopy classes of oriented trivializations of $E$. For any continuous non-vanishing section $t\in\mathbb{R}/\mathbb{Z}\mapsto Z(t)\in\Pi^{-1}(t)$, there exists another continuous non-vanishing section $Z'$, such that $\{Z(t), Z'(t)\}$ is an oriented basis of $\Pi^{-1}(t) \forall t$. The frame $\{Z, Z'\}$ determines an oriented trivialization and its homotopy class $\beta\in\Omega_{E}^{+}$ depends only on $Z$, up to homotopy through non-vanishing sections. It is called the homotopy class induced by $Z$, see \cite{HLS}.
Given a continuous non-vanishing section $W$ of $E$, it follows that $W(t)=a(t)Z(t)+b(t)Z'(t)$ for unique continuous  functions $a$, $b$ and
\begin{equation}\label{preliminaries_wind}
\mathrm{wind}(W, Z):=\frac{1}{2\pi}(\theta(1)-\theta(0))\in\mathbb{Z},
\end{equation}
where $\theta:[0, 1]\rightarrow\mathbb{R}$ is a continuous function satisfying $a(t)+ib(t)\in\mathbb{R}^{+}e^{i\theta(t)}$. This integer depends only on the homotopy classes of non-vanishing sections $Z$ and $W$. Denoting by $\beta'\in\Omega_{E}^{+}$ the homotopy class of oriented trivializations induced by $W$, we may write $\mathrm{wind}(\beta', \beta)=\mathrm{wind}(W, Z)$.

Now let $P=(x, T)\in \P(\lambda)$ be a closed Reeb orbit of the Reeb flow of a contact $\lambda$ on a closed $3$-manifold $M$. The contact structure $\xi=\ker \lambda$ is preserved by the flow. Consider the map $x_T:\R / \Z \to M$ defined by $x_T:=x(T\cdot)$. Then the bundle $x_{T}^{*}\xi\rightarrow\mathbb{R}/\mathbb{Z}$ is oriented by $d\lambda$. Choose a $d\lambda$-symplectic trivialization $\Psi:x_{T}^{*}\xi\rightarrow\mathbb{R}/\mathbb{Z}\times\mathbb{R}^{2}$ representing a class $\beta\in\Omega_{x_{T}^{*}\xi}^{+}$. Define the path of symplectic matrices $\varphi\in\Sigma$ by
\begin{equation}\label{caminho_matriz_simpletica}
\varphi(t):=\Psi_{t}\circ d\phi_{Tt}\circ \Psi_{0}^{-1},
\end{equation}
where $\Psi_{t}$ is the restriction of $\Psi$ to the fiber over $t$. The \textit{Conley-Zehnder index} of $(P, \beta)$ is  defined as
\begin{equation}\label{ind_conley_zehnder}
\mu_{CZ}(P, \beta):=\mu_{CZ}(\varphi) \in \Z.
\end{equation}
It does not depend on the choice of $\Psi$ in class $\beta$. Moreover,
\begin{equation}\label{relacao_conley_bases}
\mu_{CZ}(P, \beta)=\mu_{CZ}(P, \beta')+2\mathrm{wind}(\beta', \beta),
\end{equation} for every $\beta, \beta' \in \Omega^+_{x_T^*\xi}$.

If $P\subset M$ is contractible  and $c_{1}(\xi)|_{\pi_{2}(M)}\equiv 0$  then there exists a special class $\beta_{\rm disk}\in\Omega_{x_{T}^{*}\xi}^{+}$ induced by any trivialization of $x_T^*\xi$ which extends to a trivialization of $u^*\xi$, where $u:\D \to M$ is a capping disk for $P$, i.e., $u$ is continuous and $u(e^{2 \pi i t}) =x_T(t) \forall t$.  We may denote $\mu_{CZ}(P,\beta_{\rm disk})$ simply by $\mu_{CZ}(P)$.

Let $\varphi$ be the path of $2\times 2$ symplectic matrices defined as in (\ref{caminho_matriz_simpletica}), which is associated to the closed Reeb orbit $P\in \P(\lambda)$ and the homotopy class $\beta\in \Omega^+_{x_T^*\xi}$. In \cite{hofer2003finite} the authors present a more geometrical definition of  $\mu_{CZ}(P)$. Given  $0\neq v\in\mathbb{R}^{2}$, choose a continuous argument $\theta_{v}(t)$ of $\varphi_t v$. Let $\Delta(v):=(\theta_{v}(1)-\theta_{v}(0))/2\pi$ and consider the interval $J:=\{\Delta(v):0\neq v\in\mathbb{R}^{2}\}$.  It is possible to show that $J$  has length $<1/2$. Moreover, for every $\epsilon>0$ sufficiently small, we have
\begin{equation}\label{indice_geometrico}
\mu_{CZ}(P, \beta)= \left\{
\begin{array}{ll}
\displaystyle 2k,\hspace{0.2cm}\mathrm{if}\hspace{0.2cm}k\in J_{\epsilon} \\
\displaystyle 2k-1,\hspace{0.2cm}\mathrm{if}\hspace{0.2cm}J_{\epsilon}\subset(k-1, k),
\end{array}
\right.
\end{equation}
where $J_\epsilon:=J - \epsilon$.

The transverse rotation number of $P$ is defined as
\begin{equation*}
\rho(P, \beta):=\lim_{t\rightarrow\infty}\frac{\theta_{v}(t)}{2\pi t}.
\end{equation*}
The limit above does not depend on $v$. It satisfies
\begin{equation*}
\rho(P, \beta)=\rho(P,\beta')+{\rm wind}(\beta',\beta),
\end{equation*}
for every $\beta, \beta' \in\Omega^+_{x_T^*\xi}$.

Given $n\in \N$, denote by $P^{n}:=(x, nT)$ the $n$-th iterate of $P$. It follows that
\begin{equation}\label{eq:numero_rot}
\rho(P^{n}, \beta^{n})=n\rho(P, \beta),
\end{equation}
where $\beta^{n}\in\Omega_{x_{nT}^{*}\xi}^{+}$ is the homotopy class induced by $\beta\in\Omega_{x_{T}^{*}\xi}^{+}$. If  $P^{n}$ is non-degenerate, then the following relation is verified	
\begin{equation}\label{relacao_conley_rotacao}
\mu_{CZ}(P^{n}, \beta^{n})=\left\{
\begin{array}{ll}
 2n \rho(P, \beta), & \mbox{ if }P^{n}\mbox{ is }\mbox{ hyperbolic.} \\
 2\lfloor n\rho(P, \beta)\rfloor+1, & \mbox{ if }P^{n} \mbox{ is }\mbox{ elliptic.}
\end{array}
\right.
\end{equation}
Hence $\mu_{CZ}(P,\beta)=2$ if and only if $P$ is hyperbolic and $\rho(P,\beta)=1$.

A $d\lambda$-compatible complex structure $J$  on $x_{T}^* \xi$ determines the inner product
\begin{equation*}
\int_{\mathbb{R}/\mathbb{Z}}d\lambda_{x_{T}(t)}(Z(t), J_{t}\cdot W(t))dt, \ \ \ \forall Z,W \in L^2(x_{T}^{*}\xi).
\end{equation*}
Here $J_{t}$ is the restriction of $J$ to the fiber over $t$. Using an auxiliary symmetric connection $\nabla$ on $TM$ we define the so called asymptotic operator $A_P$ as the unbounded self-adjoint operator
\begin{equation*}
A_{P}:\eta\mapsto-J_{t} \cdot (\nabla_{t}\eta-T\nabla_{\eta}X_{\lambda}),
\end{equation*}
defined on $W^{1,2}(x_T^*\xi)\subset L^{2}(x_{T}^{*}\xi)$, where $X_{\lambda}$ is the Reeb vector field of $\lambda$ and $\nabla_{t}$ is the covariant derivative along  $x_{T}$. The operator $A_{P}$ does not depend on $\nabla$.

Choosing a $d\lambda$-symplectic trivialization $\Psi$ of $x_{T}^*\xi$ in class $\beta\in\Omega_{x_{T}^{*}\xi}^{+}$, the operator $A_{P}$ is represented as $-J(t)\partial_{t}-S(t)$, for smooth curves $t\in\mathbb{R}/\mathbb{Z}\mapsto J(t)$, $S(t)\in\mathcal{L}_{\mathbb{R}}(\mathbb{C})$, satisfying $J(t)^{2}=-I$ and $\mathrm{det}J(t)=1 \forall t$. If $\Psi$ is unitary then $J(t)\equiv  i$ and $S(t)^{T}=S(t) \forall t$. Therefore, the operator $A_{P}$ takes the form $L_{S}$ as in (\ref{op:auto_adjunto}).  Given  $\eta\in\sigma(A_{P})=\sigma(L_{S})$, a non-vanishing $\eta$-eigensection  is represented in this trivialization by an eigenvector $e\in\mathrm{ker}(L_{S}-\eta I)$ which has a winding number $\mathrm{wind}(\eta, \beta):=\mathrm{wind}(\eta)$.

Defining
\begin{equation*}
\begin{aligned}
\mathrm{wind}^{<0}(A_{P}, \beta):=\mathrm{wind}(\eta^{<0}),\\
\mathrm{wind}^{\geq 0}(A_{P}, \beta):=\mathrm{wind}(\eta^{\geq 0}),
\end{aligned}
\end{equation*}
it follows that
\begin{equation*}
\mu_{CZ}(P, \beta)=\mathrm{wind}^{<0}(A_{P}, \beta)+\mathrm{wind}^{\geq0}(A_{P}, \beta).
\end{equation*}
As before, it does not depend on $\Psi$ in class $\beta$.

\subsection{The rational self-linking number}
Let $(M,\xi=\ker \lambda)$ be a co-oriented contact manifold and let $K\subset M$ be a $k$-unknot transverse to $\xi$ oriented by $\lambda$. Let $u:\D \to M$ be a $k$-disk for $K$ inducing the same orientation and let $Z$ be a smooth non-vanishing section of $u^*\xi$. Given $\epsilon>0$ small, consider the loop $Z_{\epsilon}:\mathbb{R}/\mathbb{Z}\rightarrow M\setminus K$ defined by $Z_{\epsilon}(t):=\mathrm{exp}_{u(e^{2\pi i t})}(\epsilon Z(e^{2\pi it}))$, for some exponential map ${\rm exp}$.
\begin{definition}
The \textit{rational self-linking number} ${\rm sl}(K, u)\in\mathbb{Q}$ is defined as the oriented intersection number
\begin{equation*}
sl(K, u)=\frac{1}{k^{2}} Z_\epsilon\cdot u,
\end{equation*}where $u$  has the orientation induced by $\D$ and $Z_\epsilon$ inherits the orientation of $Z$. The ambient manifold $M$ is oriented by $\lambda \wedge d \lambda$.
\end{definition}
The integer ${\rm sl}(K, u)$ does not depend on $\epsilon$, $\mathrm{exp}$ and $Z$. If $c_{1}(\xi)|_{\pi_2(M)} \equiv 0$ then it does not depend on the $k$-disk $u$ as well. In this case, it is simply denoted by ${\rm sl}(K)$.

\subsection{Pseudo-holomorphic curves in symplectizations}
Let $(M,\xi = \ker \lambda)$ be a co-oriented contact $3$-manifold. Its symplectization is the symplectic manifold $(\mathbb{R}\times M, d(e^{a}\lambda))$, where $a$ is the $\R$-coordinate. Denote by $\pi_{\lambda}:TM\rightarrow\xi$ the projection  along the Reeb direction $X_{\lambda}$, i.e., $\pi(v)=v-\lambda(v)X_\lambda, \forall v\in TM$.

Let $\mathcal{J}_{+}(\xi)$ be the set of $d\lambda$-compatible complex structures on $\xi$. Given $J\in\mathcal{J}_{+}(\xi)$ consider the almost-complex structure $\tilde{J}$ on $\mathbb{R}\times M$ defined by
\begin{equation}\label{estrutura_quase_complexa}
\tilde{J}_{a}\cdot \partial_a = X_{\lambda} \ \ \mbox{ and }\ \ \tilde{J}|_{\xi}\equiv J.
\end{equation}
Let $(S, j)$ be a closed Riemann surface and let $\Gamma\subset S$ be a finite set. A non-constant map $\tilde{u}=(a,u):S\setminus\Gamma\rightarrow\mathbb{R}\times M$ is called a finite energy $\tilde J$-holomorphic curve if it satisfies
\begin{equation*}
\bar{\partial}_{\tilde{J}}(\tilde{u}):=\frac{1}{2}(d\tilde{u}+\tilde{J}(\tilde{u})\circ d\tilde{u}\circ j)=0,
\end{equation*}
and has finite Hofer's energy
\begin{equation*}
E(\tilde{u}):=\sup_{\phi\in\Lambda}\int_{S\setminus\Gamma}\tilde{u}^{*}d(\phi\lambda)<\infty,
\end{equation*}
where $\Lambda:=\{\phi\in C^{\infty}(\mathbb{R}, [0, 1]), \phi'\geq0\}$.

\begin{definition}
The map $\tilde{u}:S\setminus\Gamma\rightarrow\mathbb{R}\times M$ is called somewhere injective if there exists $z_{0}\in S \setminus\Gamma$ so that $d\tilde{u}(z_{0})\neq0$ and $\tilde{u}^{-1}(\tilde{u}(z_{0}))=\{z_{0}\}$.
\end{definition}

The elements in $\Gamma$ are called punctures. A puncture $z\in\Gamma$ is called  positive if $a(\zeta)\to+\infty$ as $\zeta\rightarrow z$. It is called negative if $a(\zeta)\to-\infty$ as $\zeta\rightarrow z$. It is called removable if $a$ is bounded near $z$. If $z$ is removable then $\tilde{u}$ can be smoothly extended over $z$ \cite{hofer1993pseudoholomorphic}. We always assume that no puncture is removable. In this case it is known that every puncture is either positive or negative. If $\tilde u$ is non-constant then Stokes's theorem implies that there always exists a positive puncture.

Let $z\in\Gamma$. Take a holomorphic chart $\varphi:(\D, 0)\rightarrow(U\subset S, z)$ on a neighborhood $U$ of $z$. Then we have positive cylindrical coordinates $(s, t)\in[0, +\infty)\times\mathbb{R}/\mathbb{Z}$ near $z$ given by $(s, t)\simeq\varphi(e^{-2\pi(s+it)})$ and negative cylindrical coordinates $(s, t)\in(-\infty,0]\times\mathbb{R}/\mathbb{Z}$ near $z$ given by $(s, t)\simeq\varphi(e^{2\pi(s+it)})$.

The following result due to Hofer establishes a deep connection between finite energy $\tilde J$-holomorphic curves and closed Reeb orbits.

\begin{theorem}[Hofer \cite{hofer1993pseudoholomorphic}]
Let $(s, t)\in [0,+\infty) \times \R / \Z$ be positive cylindrical coordinates near a puncture $z\in \Gamma$. For each sequence $s_{n}\rightarrow+\infty$, there exists a subsequence, still denoted $s_n$, and a closed Reeb orbit $P=(x,T)$ so that
\begin{equation*}
u(s_n, \cdot )\to x(\epsilon_z T\cdot) \mbox{ in } C^{\infty}_{\rm loc}(\R/ \Z,M) \mbox{ as } n \to +\infty.
\end{equation*}
Here $\epsilon_z=+1$  ($\epsilon_z=-1$) if $z$ is a positive (negative) puncture.
\end{theorem}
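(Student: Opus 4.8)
The plan is to carry out the standard bubbling-off analysis on the cylindrical end around $z$, following Hofer's original argument. I would fix positive cylindrical coordinates $(s,t)\in[0,+\infty)\times\R/\Z$ centered at $z$ (the negative-puncture case being symmetric, using negative cylindrical coordinates), and write $\tilde u=(a,u)$ in these coordinates, so that $\bar\partial_{\tilde J}\tilde u=0$ becomes the system $\pi_\lambda\partial_s u+J(u)\pi_\lambda\partial_t u=0$, $\partial_s a=\lambda(\partial_t u)$, $\partial_t a=-\lambda(\partial_s u)$. Since $J\in\mathcal J_+(\xi)$ is $d\lambda$-compatible, the pullback $u^*d\lambda$ is pointwise nonnegative; hence the charge $m(s):=\int_{\{s\}\times\R/\Z}u^*\lambda$ satisfies $m'(s)=\int_{\{s\}\times\R/\Z}u^*d\lambda\geq 0$, so $m$ is nondecreasing, and finiteness of $E(\tilde u)$ bounds it from above. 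Thus $m(s)\to T\geq 0$ and the tail $d\lambda$-energy $\int_{[s,\infty)\times\R/\Z}u^*d\lambda\to 0$ as $s\to\infty$. Because $z$ is assumed non-removable (as stipulated in the excerpt), one checks $T>0$: if $m\equiv 0$ then $u^*d\lambda\equiv 0$ on the end, which forces $\pi_\lambda\partial_t u\equiv 0$, hence $a$ bounded, i.e. $z$ removable, a contradiction.

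The analytic heart is a uniform gradient bound $\sup_{s\geq s_0}\|d\tilde u(s,\cdot)\|_{\infty}<\infty$. I would argue by contradiction: if the gradient blows up along points $\zeta_n$ with $s(\zeta_n)\to\infty$, then a standard almost-maximum principle (Hofer's lemma) produces suitable rescaling centers, and the rescaled maps $\tilde v_n(w):=\big(a(\zeta_n+w/R_n)-a(\zeta_n),\,u(\zeta_n+w/R_n)\big)$, where $R_n:=|\nabla\tilde u(\zeta_n)|\to\infty$, are $\tilde J$-holomorphic by the $\R$-translation invariance of $\tilde J$, have $|\nabla\tilde v_n(0)|=1$, uniformly bounded gradient, and uniformly bounded energy. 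Elliptic estimates together with Ascoli–Arzel\`a then give $C^\infty_{\rm loc}$-convergence to a non-constant finite-energy plane $\tilde v=(b,v):\C\to\R\times M$. But the tail energy decay established above forces $\int_{B_R(0)}v^*d\lambda=0$ for every $R$, since the energy of $\tilde v_n$ over $B_R(0)$ equals the $d\lambda$-energy of $\tilde u$ over a ball of radius $R/R_n$ lying in the region $\{s\geq s_n-1\}$. Hence $\int_\C v^*d\lambda=0$, and $v^*d\lambda\geq 0$ yields $\pi_\lambda\circ dv\equiv 0$, so the image of $v$ lies in a single Reeb trajectory; such a plane must be constant, contradicting $|\nabla\tilde v(0)|=1$. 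This contradiction establishes the gradient bound on the end.

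With the uniform gradient bound in hand I would extract the limit. For the given sequence $s_n\to+\infty$ the loops $t\mapsto u(s_n,t)$ are equicontinuous with uniformly bounded derivatives, so after passing to a subsequence they converge in $C^\infty(\R/\Z,M)$ to a loop $\gamma$. The tail energy decay gives $\int_0^1|\pi_\lambda\partial_t u(s_n,t)|^2\,dt\to 0$, whence in the limit $\pi_\lambda\dot\gamma\equiv 0$, i.e. $\dot\gamma=c\,X_\lambda(\gamma)$ for a constant $c$. Pairing with $\lambda$ and using $\lambda(X_\lambda)=1$ together with $m(s_n)\to T$ identifies $c=\epsilon_z T$, where $\epsilon_z=+1$ at the positive puncture (and $\epsilon_z=-1$ in the negative-puncture case, where the orientation of the coordinates flips the sign). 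Therefore $\gamma(t)=x(\epsilon_z T t)$ for a $T$-periodic Reeb orbit $P=(x,T)$, and elliptic bootstrapping upgrades the convergence to $C^\infty_{\rm loc}$, giving $u(s_n,\cdot)\to x(\epsilon_z T\cdot)$ as claimed.

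I expect the main obstacle to be the Step-3 gradient bound: controlling the blow-up requires the careful almost-maximum lemma combined with the no-bubbling statement ruling out non-constant finite-energy planes with vanishing $d\lambda$-energy, and verifying that the concentrated energy indeed vanishes in the rescaled limit. Everything downstream—Ascoli–Arzel\`a subconvergence, the identification of the limit loop as a Reeb orbit, and the elliptic regularity promoting $C^0$ to $C^\infty_{\rm loc}$ convergence—is comparatively routine once the gradient is uniformly bounded on the cylindrical end.
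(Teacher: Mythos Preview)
The paper does not give a proof of this theorem; it is stated as a result of Hofer with a citation to \cite{hofer1993pseudoholomorphic} and used as a black box. There is therefore nothing in the paper to compare your argument against.

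That said, your outline is a faithful sketch of Hofer's original bubbling-off proof, and the essential steps---monotonicity of the $\lambda$-mass $m(s)$, the uniform gradient bound via a rescaling/no-bubble argument exploiting the vanishing tail $d\lambda$-energy, and the identification of the $C^\infty$-subsequential limit loop as a closed Reeb orbit of period $T=\lim_{s\to\infty} m(s)$---are correctly identified. One small wrinkle: your justification that $T>0$ is not quite right as stated. The condition $T=0$ does not by itself give $m\equiv 0$ (only $m(s)\leq 0$), and even if $\pi_\lambda\,du\equiv 0$ on the end this does not directly force $a$ to be bounded (think of a trivial half-cylinder over a periodic orbit). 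The usual route is to first establish the gradient bound unconditionally, and then use $T=0$ together with finite energy and an isoperimetric-type estimate to conclude that $\tilde u$ is bounded near the puncture, hence removable. This is a matter of ordering and detail rather than a genuine obstruction to your strategy.
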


\begin{definition}
Let $z\in \Gamma$ be a puncture of a finite energy $\tilde J$-holomorphic curve $\tilde{u}$. Let $\epsilon_{z}\in\{-1, 1\}$ be its sign and let $(s, t)\in [0,+\infty) \times \R / \Z$ be positive cylindrical coordinates near $z$. We say that $z$ is  non-degenerate if the following hold:
\begin{itemize}
\item There exists a closed Reeb orbit $P=(x, T)\in \P(\lambda)$ and $c\in \R$  so that $u(s, t)\to x(\epsilon_{z}Tt)$ and $|a(s,t)-\epsilon_z Ts-c|\to 0$, uniformly in $t$, as $s\to+\infty$.
\item If $\zeta(s, t)\in\xi|_{x(\epsilon_{z}Tt)}$ is defined by $u(s, t)=\mathrm{exp}_{x(\epsilon_{z}Tt)}\zeta(s, t)$ for $s\gg1$ then there exists $b>0$ so that $e^{bs}|\zeta(s, t)|\to 0$, uniformly in $t$, as $s\rightarrow+\infty$.
\item If $\pi\circ du$ does not vanish identically then it does not vanish for $s$ large enough.
\end{itemize}
\end{definition}
The closed Reeb orbit above is called the asymptotic limit of $\tilde u$ at $z$.
If $\lambda$ is non-degenerate then every puncture of $\tilde{u}$ is non-degenerate \cite{properties_1}.

\begin{definition}
Let $P=(x, T)\in\mathcal{P}(\lambda)$ be a closed Reeb orbit. Denote by $T_{\rm min}>0$ the least period of $x$. A Martinet tube for $P$ is a pair $(\mathcal{U}, \Psi)$, where $\mathcal{U}$ is a neighborhood of $x(\mathbb{R})$ in $M$ and $\Psi:\mathcal{U}\rightarrow\mathbb{R}/\mathbb{Z}\times B$ is a diffeomorphism ($B\subset\mathbb{R}^{2}$ is a ball centered at the origin and $\mathbb{R}/\mathbb{Z}\times B$ is provided with coordinates $(\theta, x_{1}, x_{2})$) satisfying
\begin{itemize}
\item $\Psi^{*}(f\cdot (d\theta+x_{1}dx_{2}))=\lambda$, where  $f:\mathbb{R}/\mathbb{Z}\times B\to \mathbb{R}^{+}$ is smooth and satisfies $f|_{\mathbb{R}/\mathbb{Z}\times \{0\}}\equiv T_{\rm min}$ and $df|_{\mathbb{R}/\mathbb{Z}\times \{0\}}\equiv 0$.
\item$\Psi(x(T_{\rm min}t))=(t, 0, 0) \forall t$.
\end{itemize}
\end{definition}

Let $\tilde{u}=(a, u):(S\setminus\Gamma, j)\rightarrow(\mathbb{R}\times M, \tilde{J})$ be a finite energy $\tilde J$-holomorphic curve.  Let $z_0\in \Gamma$ be a puncture of $\tilde u$ with sign $\epsilon\in\{-1,1\}$. Consider positive cylindrical coordinates $(s, t)\in[0,+\infty) \times \R / \Z$ near $z_0$ if $z_{0}$ is positive or consider negative cylindrical coordinates near $z_0$ if $z_{0}$ is negative. Assume that $\lambda$ is nondegenerate and let $P=(x,T)\in \P(\lambda)$ be the asymptotic limit of $\tilde{u}$ at $z_{0}$. Choose a Martinet tube $(\mathcal{U}, \Psi)$ for $P$. For $\epsilon s\gg1$ there are well defined functions $\theta(s, t)\in\mathbb{R}/\mathbb{Z}$, $x_{1}(s, t)$, $x_{2}(s, t)\in\mathbb{R}$,  so that
\begin{equation*}
\Psi\circ u(s, t)=(\theta(s, t), z(s,t))\in \R/\Z \times B \mbox{ with }
z(s, t):=(x_{1}(s, t), x_{2}(s, t)).
\end{equation*}
We still denote by $\theta(s, t)$ its lift to $\mathbb{R}$. Hence it satisfies $\theta(s, t+1)=\theta(s, t)+k$, $\forall (s,t)$, where $k \in \N$ is the covering multiplicity of $P$ determined by $T=kT_{\rm min}$. We may assume that $\theta(s, 0)\to 0$ as $s\to +\infty$. We also assume that $\pi \circ du$ does not vanish near $z_0$.

\begin{theorem}[Hofer, Wysocki and Zehnder \cite{properties_1}, Siefring \cite{siefring2008relative}]\label{theorem:eigen_asymp}
There exist an eigenvalue $\eta$ of $A_{P}$ satisfying $\epsilon\eta<0$, a non-vanishing $\eta$-eigensection $v$, a function $R(s, t)\in\mathbb{R}^{2}$ defined for $\epsilon s\gg1$, and constants $c\in\mathbb{R}$, $r>0$, such that the following hold:
\begin{equation*}
\lim_{\epsilon s\rightarrow+\infty}e^{r|s|}||\partial_{s}^{\beta_{1}}\partial_{t}^{\beta_{2}}[a(s, t)-Ts-c](s, \cdot)||_{L^{\infty}(\mathbb{R}/\mathbb{Z})}=0
\end{equation*}
\begin{equation*}
\lim_{\epsilon s\rightarrow+\infty}e^{r|s|}||\partial_{s}^{\beta_{1}}\partial_{t}^{\beta_{2}}[\theta(s, t)-kt](s, \cdot)||_{L^{\infty}(\mathbb{R}/\mathbb{Z})}=0,
\end{equation*}
 $\forall (\beta_{1}, \beta_{2}) \in \N \times \N$. Moreover, if $t\in\mathbb{R}/\mathbb{Z}\mapsto e(t)\in\mathbb{R}^{2}$ is the representation of $v$ in the frame $\{\partial_{x_{1}}, \partial_{x_{2}}\}$ along $x(T\cdot)$  then
\begin{equation}
z(s, t)=e^{\eta s}(e(t)+R(s, t))\ \mbox{ and } \ \lim_{\epsilon s\rightarrow+\infty}||\partial_{s}^{\beta_{1}}\partial_{t}^{\beta_{2}}R(s, \cdot)||_{L^{\infty}(\mathbb{R}/\mathbb{Z})}=0.
\end{equation}
\end{theorem}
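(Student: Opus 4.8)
The plan is to deduce the statement from the asymptotic analysis of finite energy pseudoholomorphic half-cylinders developed by Hofer, Wysocki and Zehnder \cite{properties_1} and refined by Siefring \cite{siefring2008relative}. I treat the case of a positive puncture, so $\epsilon = +1$ and the relevant coordinates near $z_0$ are $(s,t) \in [0,+\infty) \times \R/\Z$; the negative case is obtained by reversing the sign of $s$. The first step is to promote the $C^\infty_{\rm loc}$-convergence $u(s,\cdot) \to x(T\cdot)$ furnished by Hofer's theorem \cite{hofer1993pseudoholomorphic} to convergence that is uniform in $t$ with an exponential rate in $s$. Here one uses that the puncture is non-degenerate and that, $\lambda$ being non-degenerate, the asymptotic operator $A_P$ is invertible: a bubbling-off argument bounds $|du(s,\cdot)|$ uniformly, interior elliptic estimates give uniform $C^k$-bounds for every $k$, and a differential inequality for the Hofer energy of $\tilde u$ over $[s,+\infty)\times\R/\Z$ forces this energy to decay exponentially in $s$; elliptic estimates then turn this into exponential $C^k$-decay of $u(s,\cdot) - x(T\cdot)$. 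Granting this, the two displayed limits for $a(s,t) - Ts - c$ and $\theta(s,t) - kt$ follow: in a Martinet tube $(\mathcal U, \Psi)$ for $P$ these quantities satisfy inhomogeneous Cauchy--Riemann type equations whose right-hand sides vanish to the appropriate order in the transverse component $z(s,t)$, so once $z$ decays exponentially one integrates along $s$, the constant $c$ being the limit of $a(s,t) - Ts$.

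For the transverse estimate, write $\Psi \circ u(s,t) = (\theta(s,t), z(s,t))$ and use the normal form $\Psi_*\lambda = f\,(d\theta + x_1\,dx_2)$. Then $\bar\partial_{\tilde J}\tilde u = 0$ yields, for $s$ large, a perturbed Cauchy--Riemann system
\begin{equation*}
\partial_s z + M(s,t)\,\partial_t z + S(s,t)\,z = 0,
\end{equation*}
with $M(s,t) \to J_\infty(t)$ and $S(s,t) \to S_\infty(t)$ exponentially as $s \to +\infty$, where, after expressing everything in the frame $\{\partial_{x_1}, \partial_{x_2}\}$ along $x(T\cdot)$ and arranging it to be unitary, the operator $-J_\infty\partial_t - S_\infty$ is precisely the asymptotic operator $A_P$; recall $A_P$ has discrete real spectrum accumulating only at $\pm\infty$ and, by non-degeneracy, no zero eigenvalue. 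The standing assumption that $\pi \circ du$ does not vanish near $z_0$ rules out the trivial-cylinder case $z \equiv 0$ and, via the similarity principle, guarantees $z(s,\cdot) \ne 0$ for all $t$ once $s$ is large.

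Now I would run the monotonicity argument. Rewrite the equation for $z$ as $\partial_s z = A_P z + g$ with $\|g(s,\cdot)\|_{L^2} \le C e^{-\delta s}\,\|z(s,\cdot)\|_{L^2}$, the defect $g$ arising from the exponentially small perturbations $M - J_\infty$ and $S - S_\infty$. The Rayleigh-type quotient
\begin{equation*}
\alpha(s) := \frac{\langle A_P\, z(s,\cdot),\, z(s,\cdot)\rangle_{L^2}}{\|z(s,\cdot)\|_{L^2}^2}
\end{equation*}
is bounded, and a direct computation gives $\alpha'(s) \ge 2\,\|(A_P - \alpha(s))\,\hat z(s,\cdot)\|_{L^2}^2 - C e^{-\delta s}$, where $\hat z := z/\|z\|_{L^2}$; the leading term is nonnegative and vanishes exactly when $\hat z(s,\cdot)$ is an eigensection of $A_P$, so $\alpha$ is monotone up to an exponentially small correction and converges to some $\eta \in \sigma(A_P)$ (if the limit were not an eigenvalue the leading term would be bounded below by a positive constant, contradicting boundedness of $\alpha$), and $\|z(s,\cdot)\| \to 0$ forces $\eta < 0$, i.e.\ $\epsilon\eta < 0$. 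A companion functional measuring the $L^2$-distance of $\hat z(s,\cdot)$ from the $\eta$-eigenspace is then shown to decay exponentially, so, letting $e$ be a unit $\eta$-eigensection, one obtains the representation $z(s,t) = e^{\eta s}(e(t) + R(s,t))$ with $\|R(s,\cdot)\|_{L^2} \to 0$ exponentially; a final elliptic bootstrap on the equation for $z$ upgrades this to $C^\infty$-decay of $R$ and of every $\partial_s^{\beta_1}\partial_t^{\beta_2} R$, which is the last displayed assertion.

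I expect the main obstacle to be exactly this last nonlinear step — the Siefring-type refinement that isolates a single leading eigenvalue and eigensection with an exponentially small, $C^\infty$-controlled remainder. It requires the coercivity of $\alpha$ and of the companion distance functional to persist under the perturbation from the asymptotic model $A_P$, together with a careful bootstrap near the puncture; by contrast the preliminary exponential convergence of $u(s,\cdot)$ to $P$ is comparatively routine once $A_P$ is known to be invertible.
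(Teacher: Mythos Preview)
The paper does not prove this theorem: it is stated as a known result and attributed to Hofer--Wysocki--Zehnder \cite{properties_1} and Siefring \cite{siefring2008relative}, with no argument given in the text. So there is no ``paper's own proof'' to compare against.

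That said, your sketch is a faithful outline of how the proof actually runs in those references. The decomposition into (i) preliminary exponential convergence via invertibility of $A_P$ and elliptic bootstrapping, (ii) the Rayleigh-quotient monotonicity argument to pin down a limiting eigenvalue $\eta$ with $\epsilon\eta<0$, and (iii) the Siefring refinement isolating a leading eigensection with exponentially small $C^\infty$ remainder, is exactly the structure of the original proofs. Your identification of step (iii) as the delicate part is also accurate: the HWZ paper \cite{properties_1} already gives a version of the asymptotic formula, while Siefring's contribution \cite{siefring2008relative} is precisely the sharpening that controls all derivatives of the remainder and makes the leading term well-defined up to the eigenspace. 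One small caveat: you should be explicit that the eigensection $e$ in the formula $z(s,t)=e^{\eta s}(e(t)+R(s,t))$ is not an arbitrary unit eigensection but a specific one determined (up to sign in the simple-eigenvalue case, or within the eigenspace otherwise) by the curve $\tilde u$; your phrasing ``letting $e$ be a unit $\eta$-eigensection'' slightly obscures this.
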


The eigenvalue $\eta$ and the eigensection $v$ are called the asymptotic eigenvalue and the asymptotic eigensection of $\tilde{u}$ at $z_0$, respectively. Theorem \ref{theorem:eigen_asymp} remains valid if we only require $z_0$ to be a nondegenerate puncture.

Let $\beta \in \Omega_{x(T_{\rm min}\cdot)^*\xi}^+$ be a homotopy class of symplectic trivializations of $\xi$ along the prime closed Reeb orbit $P_{\rm min}=(x,T_{\rm min})$. Recall that $k$ is the covering number of $P$ over $P_{\rm min}$. Let $\beta^k$ be the homotopy class of symplectic trivializations of $\xi$ along $P$ induced by $\beta.$ Let ${\rm wind}(v,\beta^k)$ be the winding number of the asymptotic eigensection $v$ of $\tilde u$ at $z_0$ with respect to $\beta^k$.

\begin{definition}[Relatively prime puncture, \cite{hryniewicz2016elliptic}]\label{furos_primos}
The nondegenerate puncture $z_0\in \Gamma$ is called relatively prime if ${\rm gcd}(\wind(v, \beta^k),k)=1$.
\end{definition}

Observe that Definition \ref{furos_primos} does not depend on $\beta$. The following lemma will be very useful in the proof of Theorem \ref{teo:lp}.

\begin{lemma}[Hryniewicz and Salom\~ao \cite{hryniewicz2016elliptic}]\label{mergulho}
Let $z_0\in\Gamma$ be a nondegenerate and relatively prime puncture of a finite energy $\tilde J$-holomorphic curve $\tilde{u}=(a, u):S\setminus\Gamma \rightarrow\mathbb{R}\times M$. Then there exists a  neighborhood $\mathcal{U}$ of $z$ in $S$ so that $u|_{\mathcal{U}\setminus\{z\}}$ is an embedding.
\end{lemma}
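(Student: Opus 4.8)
The plan is to read off the behaviour of $u$ near $z_0$ from the asymptotic formula of Theorem~\ref{theorem:eigen_asymp}, and to use that the asymptotic eigensection of $\tilde u$ is an eigensection of the asymptotic operator of the \emph{iterated} orbit $P_{\min}^{k}$, whose $\tfrac1k$-periodicity rules out $u$ being multiply covered near $z_0$ precisely when the puncture is relatively prime. We may assume $z_0$ is a positive puncture (the negative case is identical after $a\mapsto-a$); also $\pi\circ du$ does not vanish near $z_0$, since otherwise the asymptotic eigensection $v$ of Definition~\ref{furos_primos} is not defined. Let $P=(x,T)$ be the asymptotic limit at $z_0$, $P_{\min}=(x,T_{\min})$ the underlying prime orbit with $T=kT_{\min}$; fix a Martinet tube $(V,\Psi)$ for $P_{\min}$ and positive cylindrical coordinates $(s,t)\in[s_0,\infty)\times\R/\Z$ near $z_0$. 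By Theorem~\ref{theorem:eigen_asymp},
\begin{equation*}
\Psi\circ u(s,t)=(\theta(s,t),z(s,t)),\qquad \theta(s,t)=kt+\rho(s,t),\qquad z(s,t)=e^{\eta s}\left(e(t)+R(s,t)\right),
\end{equation*}
with $\eta<0$, with $e\colon\R/\Z\to\R^2\setminus\{0\}$ the coordinate representation of the asymptotic eigensection $v$ (so $e$ has winding number $w:=\wind(v,\beta^{k})$ with respect to $\{\partial_{x_1},\partial_{x_2}\}$), and with $\rho,R\to0$ in $C^\infty$ as $s\to\infty$; by hypothesis $\gcd(w,k)=1$.

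The decisive input is a quasi-periodicity of $e$. Since $P=P_{\min}^{k}$, the operator $A_{P}$, expressed in the $\beta^{k}$-trivialization, has the form $-i\partial_t-\widetilde S(t)$ with $\widetilde S$ a $\tfrac1k$-periodic path of symmetric matrices — the standard description of the asymptotic operator of an iterate. Hence the translation $t\mapsto t+\tfrac1k$ preserves the $\eta$-eigenspace of $A_{P}$; being of order $k$ on this finite-dimensional space it acts as multiplication by a $k$-th root of unity, and as eigensections with eigenvalue $\eta$ all have the same winding, this scalar must be $e^{2\pi i w/k}$. Thus $e(t+j/k)=e^{2\pi i jw/k}e(t)$ for all $t$ and $j\in\Z$. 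Since $\gcd(w,k)=1$, for every $j\in\{1,\dots,k-1\}$ the unit vectors $e(t)/|e(t)|$ and $e(t+j/k)/|e(t+j/k)|$ make the fixed nonzero angle $2\pi jw/k$; by compactness there is $c_0>0$ with $\langle e(t)/|e(t)|,\,e(t+j/k)/|e(t+j/k)|\rangle\le 1-c_0$ for all such $t,j$. I expect this structural fact, and the way it invokes $\gcd(w,k)=1$, to be the real obstacle; the rest of the argument is soft.

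Granting it, I would argue as follows. Immersivity: for $s$ large $du(s,t)$ is injective, because $\partial_t u$ has $\theta$-component $k+\partial_t\rho\to k\neq0$ while $\partial_s u$ has $\theta$-component $\partial_s\rho\to0$ and $z$-component $e^{\eta s}(\eta e(t)+o(1))\neq0$, so the two are not proportional once $s_0$ is large. Injectivity: if $u(s_1,t_1)=u(s_2,t_2)$ with $s_1,s_2\ge s_0$, then $\theta(s_1,t_1)\equiv\theta(s_2,t_2)\pmod1$ and $z(s_1,t_1)=z(s_2,t_2)$; the first identity forces $k(t_1-t_2)$ to be within $o(1)$ of $\Z$ (uniformly in $s_0$), and equality of $|z(s_1,t_1)|=|z(s_2,t_2)|$ — with $|e|$ bounded above and below and $R\to0$ — bounds $|s_1-s_2|$. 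Suppose now that $u$ is injective on no $[s_0,\infty)\times\R/\Z$ and choose double points $(s_1^{n},t_1^{n})\neq(s_2^{n},t_2^{n})$ with $s_i^{n}\to\infty$; after a subsequence, $s_1^{n}-s_2^{n}\to\sigma$, $t_1^{n}\to t^*$, and $t_1^{n}-t_2^{n}\to j/k$ for a fixed $j\in\{0,\dots,k-1\}$, so $t_2^{n}\to t^*-j/k$. Equality of the unit vectors of $z(s_i^{n},t_i^{n})$ gives, in the limit, $e(t^*)/|e(t^*)|=e(t^*-j/k)/|e(t^*-j/k)|$, which by the preceding estimate forces $j=0$; then $|z(s_1^{n},t_1^{n})|=|z(s_2^{n},t_2^{n})|$ yields $e^{\eta\sigma}=\lim|e(t_2^{n})|/|e(t_1^{n})|=1$, so $\sigma=0$. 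Hence $s_1^{n}-s_2^{n}\to0$ and $t_1^{n}-t_2^{n}\to0$.

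Finally a rescaling argument closes the loop. Set $\hat u_n(s,t):=\left(\theta(s+s_1^{n},t)\bmod 1,\ e^{-\eta s_1^{n}}z(s+s_1^{n},t)\right)$; using $\rho,R\to0$ in $C^\infty$ one has $\hat u_n\to\hat u_\infty$ in $C^\infty_{\mathrm{loc}}$, with $\hat u_\infty(s,t)=(kt\bmod 1,\ e^{\eta s}e(t))$, which (as in the immersivity check) is an immersion near $(0,t^*)$, hence an embedding on a fixed compact neighbourhood $N$ of $(0,t^*)$. For $n$ large the points $(0,t_1^{n})$ and $(s_2^{n}-s_1^{n},t_2^{n})$ lie in $N$, are distinct, converge to $(0,t^*)$, and satisfy $\hat u_n(0,t_1^{n})=\hat u_n(s_2^{n}-s_1^{n},t_2^{n})$; but for $n$ large $\hat u_n|_N$ is itself an embedding, since $\hat u_n\to\hat u_\infty$ in $C^1$ on the compact $N$ — a contradiction. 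Therefore $u$ is injective on $[s_0,\infty)\times\R/\Z$ for $s_0$ large, and with immersivity this shows $u$ restricts to an embedding there; taking $\mathcal U$ to be the corresponding disk neighbourhood of $z_0$ proves the lemma.
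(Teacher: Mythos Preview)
The paper does not give its own proof of this lemma; it is quoted from \cite{hryniewicz2016elliptic} and used as a black box. So there is no in-paper argument to compare against, and I comment only on your attempt.

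Your overall strategy is correct and is essentially the one in the cited reference: read off the behaviour of $u$ near the puncture from the asymptotic formula in a Martinet tube, exploit the $\tfrac1k$-periodicity of the coefficients of $A_P$ in a trivialization pulled back from $P_{\min}$, and show that the relatively prime condition prevents self-intersections from accumulating at the puncture. The immersivity check, the bound on $|s_1-s_2|$, the extraction of a limiting configuration, and the rescaling contradiction are all sound.

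There is, however, a gap in your key step. The assertion that the shift $e(t)\mapsto e(t+\tfrac1k)$ acts on the $\eta$-eigenspace as multiplication by the \emph{complex} scalar $e^{2\pi i w/k}$ is not justified and is in fact false in general: the operator $L_{\tilde S}=-i\partial_t-\tilde S$ is only $\mathbb{R}$-linear (a real symmetric $\tilde S$ need not commute with $i$), so the shift need not act by a complex scalar, and for, say, $\tilde S$ constant diagonal with distinct entries it does not. What \emph{is} true, and is all you actually use afterwards, is that for $j\in\{1,\dots,k-1\}$ the vectors $e(t)$ and $e(t+j/k)$ are never positively $\mathbb{R}$-proportional. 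This follows directly from the fact that nontrivial eigensections never vanish: if $e(t_0+j/k)=\lambda e(t_0)$ with $\lambda>0$ then $e(\cdot+j/k)-\lambda e(\cdot)$ is an $\eta$-eigensection vanishing at $t_0$, hence identically zero; iterating forces $\lambda=1$ and $e$ to be $\gcd(j,k)/k$-periodic, so $k/\gcd(j,k)$ divides $w$ and hence divides $\gcd(w,k)=1$, a contradiction. With this correction your uniform bound $\langle e(t)/|e(t)|,\,e(t+j/k)/|e(t+j/k)|\rangle\le 1-c_0$ follows by compactness, and the remainder of your argument goes through.
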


We finish this section introducing some algebraic invariants of finite energy curves as in \cite{properties_2}. Let $\tilde{u}=(a, u):(S\setminus\Gamma, j)\to (\mathbb{R}\times M, \tilde{J})$ be a finite energy $\tilde J$-holomorphic curve, where $S$ is closed and connected. Assume that every  puncture of $\tilde u$ is nondegenerate. If $u^{*}d\lambda$ does not vanish identically then $\pi \circ du$ has only  finitely many zeros. Indeed, Carleman's similarity principle implies that the zeros of $\pi\circ du$ are isolated and contribute positively. Moreover, since every $z\in \Gamma$ is nondegenerate,  $\pi\circ du$ does not vanish close to $z$. Hofer, Wysocky and Zehnder define
\begin{equation*}
\mathrm{wind}_\pi(\tilde{u}):=\mbox{algebraic count of zeros of }\pi\circ du.
\end{equation*}
We have
\begin{equation*}
{\rm wind}_\pi(\tilde u) = 0 \Leftrightarrow u \mbox{ is an immersion}.
\end{equation*}

Now take a $d\lambda$-symplectic trivialization $\sigma$ of $u^{*}\xi$. For each $z\in\Gamma$, $\sigma$ induces a homotopy class $\beta_z$ of $d\lambda$-symplectic trivializations of $x_{z}(T_{z}\cdot)^{*}\xi$, where $P_z=(x_{z}, T_{z})$ is the asymptotic limit of $\tilde{u}$ at $z$. Set $\mathrm{wind}(\tilde{u}, z, \sigma):=\mathrm{wind}(v_{z}, \beta_{z})$, where $v_{z}$ is the asymptotic eigensection of $\tilde{u}$ at $z$. Then define
\begin{equation*}
\mathrm{wind}_{\infty}(\tilde{u}):=\sum_{z\in\Gamma^{+}}\mathrm{wind}(\tilde{u}, z, \sigma)-\sum_{z\in\Gamma^{-}}\mathrm{wind}(\tilde{u}, z, \sigma),
\end{equation*}
where $\Gamma^{+}\subset \Gamma$ is the set of positive punctures and $\Gamma^{-}\subset \Gamma$ is the set of negative punctures of $\tilde u$. The above sum does not depend on $\sigma$. In \cite{properties_2} the authors prove that
\begin{equation}\label{wind_eq}
0\leq \mathrm{wind}_{\pi}(\tilde{u})=\mathrm{wind}_{\infty}(\tilde{u})-\chi(S)+\#\Gamma.
\end{equation}

\begin{definition}
A finite energy plane $\tilde{u}:\mathbb{C}\rightarrow\mathbb{R}\times M$ is called fast if the puncture at $\infty$ is  nondegenerate and $\mathrm{wind}_{\pi}(\tilde{u})=0$ (or, equivalently, ${\rm wind}_{\infty}(\tilde u)=1$).
\end{definition}

\section{Proof of main result}\label{dem_lp}

The proof of Theorem \ref{teo:lp} follows the same ideas in \cite{hofer1998dynamics} and \cite{hryniewicz2016elliptic} where the cases $p=1$ and $p=2$ are treated, respectively. Here we assume $p>2$.

Let $\pi_p:S^3 \to L(p,1)$ be the natural projection. Let $f:L(p, 1)\to (0, +\infty)$ be the smooth function so that $\lambda =f \lambda_0$, where $\lambda_0$ is the Liouville form on $L(p,1)$. 

Let $H:\mathbb{R}^{4}\rightarrow\mathbb{R}$ be the Hamiltonian function
\begin{equation*}
H(x_{1}, x_{2}, y_{1}, y_{2})= \frac{x_{1}^{2}+y_{1}^{2}}{r_{1}^{2}}+\frac{x_{2}^{2}+y_{2}^{2}}{r_{2}^{2}},
\end{equation*}
where $0<r_{1}<r_{2}\in\mathbb{R}$. Let $E:=H^{-1}(1)\in\mathbb{R}^{4}$ be the ellipsoid associated to $r_1$ and $r_2$. It induces a contact form $\lambda_{E}=f_{E}\lambda_{0}$ on $S^{3}$ by pulling back the Liouville form via the radial map $S^3 \to E$. Since $\lambda_{E}$ is preserved under $g_{p,1}$, it descends to a contact form on $L(p,1)$, still denoted  $\lambda_{E}$.

If $r_{2}^2/r_{1}^{2}\in\mathbb{R} \setminus\mathbb{Q}$ then the Reeb flow of $\lambda_{E}$ on $L(p,1)$ has precisely two nondegenerate simple closed Reeb orbits $P_{1}=\pi_{p, 1}(S^{1}\times\{0\})$ and $P_{2}=\pi_{p, 1}(\{0\}\times  S^{1})$, forming a Hopf link. Their periods are $T_{1}=\pi r_{1}^{2}/p$ and $T_{2}=\pi r_{2}^{2}/p$, respectively. The closed Reeb orbits $P_{1}$ and $P_{2}$ are $p$-unknotted and have self-linking number $-1/p$ \cite{HLS}. Moreover, the Conley-Zehnder indices of the contractible closed Reeb orbits orbits $P_{1}^{p}$ and $P_{2}^{p}$ are
\begin{equation*}
\mu_{CZ}(P_{1}^{p})=3 \ \mbox{ and } \mu_{CZ}(P_{2}^{p})=2k+1,
\end{equation*}
where $k\in \mathbb{N}$ is such that $r_{2}^{2}/r_{1}^{2}\in(k-1, k)$. It is always possible to choose $r_{1},r_{2}$ large enough so that $f<f_{E}$ on $L(p, 1)$, see \cite{coloquio_contato}.

\subsection{The non-degenerate case}

In this section we prove Theorem \ref{teo:lp} in the non-degenerate case. The degenerate case will be treated as a limiting case.

\begin{proposition}\label{prop:nao_deg}
Let $\lambda=f\lambda_{0}$ be a contact form on $L(p,1)$, where $f:L(p, 1)\rightarrow (0, +\infty)$ is smooth. Choose $0<r_{1}<r_{2}$ and $r_{2}^{2}/r_{1}^{2}\in\mathbb{R}\setminus\mathbb{Q}$ so that $f<f_{E}$ pointwise. Assume that every closed Reeb orbit $P\in\mathcal{P}(\lambda)$ with period $\leq \pi r_{1}^{2}$ satisfies:
\begin{itemize}
\item[(i)] $P$ is nondegenerate.
\item[(ii)] If $P$ is contractible then $\mu_{CZ}(P, \beta_{\rm disk})\geq3$.
\end{itemize}
Let $J\in \mathcal{J}_+(\xi)$  and let $\tilde{J}$ be the cylindrical almost complex structure on $\mathbb{R}\times L(p, 1)$ induced by $\lambda$ and $J$. Then there exists a $p$-unknotted closed Reeb orbit $P_0=(x_0,T_0)\in \P(\lambda)$ and a finite energy $\tilde{J}$-holomorphic plane $\tilde{u}:\mathbb{C} \to \mathbb{R}\times L(p, 1)$ satisfying
\begin{itemize}

\item The asymptotic limit of $\tilde{u}$ at $\infty$ is the nondegenerate closed Reeb orbit $P_{0}^{p}=(x_{0}, pT_{0})$ whose Conley-Zehnder index is $3$.
\item $\tilde{u}$ is embedded and $E(\tilde{u})=pT_{0}\leq\pi r_{1}^{2}$.
\item $u$ is an embedding transverse to $X_{\lambda}$ and $u(\mathbb{C})\cap x_{0}(\mathbb{R})=\emptyset$.
\item the rational self-linking number of $P_0$ is $-1/p$.
\end{itemize}
\end{proposition}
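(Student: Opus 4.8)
The plan is to follow the scheme of \cite{hofer1998dynamics} and \cite{hryniewicz2016elliptic}, where the cases $p=1$ and $p=2$ are treated: deform $\lambda$ to the model ellipsoid form $\lambda_E$ through contact forms and transport an explicit fast finite energy plane from $\lambda_E$ back to $\lambda$ by a continuity argument. The base case is the ellipsoid. Since $r_2^2/r_1^2\in\R\setminus\Q$, the Reeb flow of $\lambda_E$ on $L(p,1)$ has exactly the Hopf link $P_1\cup P_2$, with $P_1$ a $p$-unknot, $\sl(P_1)=-1/p$ and $\mu_{CZ}(P_1^p)=3$; the disk-like global surface of section bounded downstairs on $S^3$ descends to a fast $\tilde J_E$-holomorphic plane $\tilde u_E:\C\to\R\times L(p,1)$ asymptotic to $P_1^p$, embedded, with $E(\tilde u_E)=pT_1=\pi r_1^2$, projecting to an embedding transverse to $X_{\lambda_E}$ and disjoint from $P_1$.

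For the deformation set $f_\tau:=(1-\tau)f+\tau f_E$ and $\lambda_\tau:=f_\tau\lambda_0$, a smooth path of contact forms with $\ker\lambda_\tau\equiv\xi_{\rm std}$, $\lambda_0=\lambda$, $\lambda_1=\lambda_E$ and $f_\tau<f_E$ for $\tau<1$; choose a compatible path $J_\tau\in\mathcal J_+(\xi_{\rm std})$ with $J_0=J$, $J_1=J_E$, inducing cylindrical structures $\tilde J_\tau$, and (fixing the endpoint data, where hypothesis (i) already provides nondegeneracy below period $\pi r_1^2$) take the path generic. Consider the parametrized moduli space $\mathcal M$ of pairs $(\tau,[\tilde u])$ such that $\tilde u$ is a fast $\tilde J_\tau$-plane with $E(\tilde u)\le\pi r_1^2$ asymptotic to $Q^p$, where $Q$ is a $p$-unknotted closed Reeb orbit of $\lambda_\tau$ with $\sl(Q)=-1/p$ and $\mu_{CZ}(Q^p)=3$. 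Because a fast plane has $\mathrm{wind}_\pi(\tilde u)=0$, the relevant Cauchy--Riemann operator is automatically transverse in dimension three by the intersection/winding estimates of \cite{properties_2}; hence near each of its points $\mathcal M$ is a one-dimensional manifold, the discrete data in its definition are locally constant along it, and $(1,[\tilde u_E])\in\mathcal M$ is a transversally cut out point over $\tau=1$.

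The crux is to show that the connected component $\mathcal C$ of $\mathcal M$ through $(1,[\tilde u_E])$ is proper over $[0,1]$; granting this, $\mathcal C$ is an arc from $(1,[\tilde u_E])$ to a point over $\tau=0$. So suppose $(\tau_n,[\tilde u_n])\in\mathcal C$ with $\tau_n\to\tau_\infty$. SFT compactness produces a $\tilde J_{\tau_\infty}$-holomorphic building as a limit, and one must rule out breaking, bubbling and multiple covers. The bound $E(\tilde u_n)\le\pi r_1^2$ together with $f_\tau<f_E$ bounds the periods of every closed Reeb orbit that can occur in the limit building --- in particular it excludes $P_2$, whose $p$-th iterate has index $2k+1>3$ for the chosen $r_2/r_1$ --- while hypothesis (ii) forces every contractible orbit occurring there as an asymptotic limit to have $\mu_{CZ}\geq 3$. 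Feeding these facts into $0\le\mathrm{wind}_\pi=\mathrm{wind}_\infty-\chi+\#\Gamma$, and using positivity of intersections and nonnegativity of the $d\lambda_{\tau_\infty}$-area of each component, one organizes the combinatorics of the degeneration so that the only surviving configuration is a single somewhere injective fast plane $\tilde u_{\tau_\infty}$ asymptotic to $Q^p$ for a $p$-unknot $Q$ with $\sl(Q)=-1/p$, $\mu_{CZ}(Q^p)=3$ and energy $\le\pi r_1^2$; thus $(\tau_\infty,[\tilde u_{\tau_\infty}])\in\mathcal M$. This is precisely where every hypothesis of the proposition is used, and it is the hardest part: it reproduces, in the present lens space setting, the compactness analysis of \cite{hofer1998dynamics,hryniewicz2016elliptic}.

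Properness then yields $(0,[\tilde u])\in\mathcal M$, i.e.\ a fast $\tilde J$-plane $\tilde u=(a,u):\C\to\R\times L(p,1)$ for $\lambda$ asymptotic to $P_0^p$, where $P_0=(x_0,T_0)$ is a $p$-unknotted closed Reeb orbit with $\sl(P_0)=-1/p$ and $\mu_{CZ}(P_0^p)=3$; moreover $E(\tilde u)=pT_0\le\pi r_1^2$ by the standard area identity for finite energy planes, and $P_0^p$ is nondegenerate by hypothesis (i). It remains only to record the embeddedness statements: since $\mathrm{wind}_\infty(\tilde u)=1$ is coprime to $p$, the puncture at $\infty$ is relatively prime, so Lemma \ref{mergulho} makes $u$ an embedding near $\infty$; as $\mathrm{wind}_\pi(\tilde u)=0$ the map $u$ is an immersion transverse to $X_\lambda$, and positivity of intersections of $\tilde u$ with itself and with $\R\times P_0$ upgrades this to a global embedding of $\C$ with $u(\C)\cap x_0(\R)=\emptyset$ and image compactifying to a $p$-disk for $P_0$, and likewise forces $\tilde u$ itself to be embedded --- exactly as in \cite{hofer1998dynamics,hryniewicz2016elliptic}. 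Finally the degenerate case of Theorem \ref{teo:lp} is recovered by a further limiting argument, taking a sequence of nondegenerate perturbations converging to $\lambda$ and passing to a limit of the associated planes.
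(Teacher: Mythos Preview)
Your approach is genuinely different from the paper's, and it has a real gap. The paper does \emph{not} run a parametric continuation through a family $\lambda_\tau$ of contact forms. Instead it builds a single symplectic cobordism: a non-cylindrical $\bar J\in\mathcal J(\lambda_E,J_E,\lambda,J)$ on $\R\times L(p,1)$ that equals $\tilde J_E$ for $a\ge 2$ and $\tilde J$ for $a\le -2$; it studies the moduli space $\Theta'$ of embedded $\bar J$-planes asymptotic to $P_1^p$; it extracts (Proposition~\ref{prop_mininf}) a sequence in $\Theta'$ whose $\R$-component escapes to $-\infty$; and it analyzes the SFT limit as a bubbling-off tree (Theorem~\ref{teo:furo_negativo}), showing via Lemma~\ref{lema:unico_furo}, Lemma~\ref{lema:trivializacao}, Proposition~\ref{lema:ind_2k_1} and Lemma~\ref{lema:potencia_da_orbita} that the tree has exactly one $\bar J$-cylinder over one $\tilde J$-plane, the latter being the desired $\tilde u$.

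The gap in your scheme is the compactness step at an interior parameter. When $(\tau_n,[\tilde u_n])\in\mathcal C$ with $\tau_n\to\tau_\infty\in(0,1)$, the SFT limit is a building in the symplectization of $\lambda_{\tau_\infty}$, and every asymptotic orbit in it is a closed Reeb orbit of $\lambda_{\tau_\infty}$. To rule out breaking you invoke hypothesis~(ii), but (ii) is assumed only for $\lambda=\lambda_{\tau=0}$; nothing guarantees that the intermediate forms $\lambda_\tau=((1-\tau)f+\tau f_E)\lambda_0$ have $\mu_{CZ}\ge 3$ on contractible orbits of period $\le\pi r_1^2$, and dynamical convexity is not preserved under convex combination of the conformal factors. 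Your genericity move can arrange nondegeneracy for generic $\tau$, but not index lower bounds. Without (ii) at $\tau_\infty$, contractible orbits with $\mu_{CZ}\le 2$ may appear and your $\mathrm{wind}_\pi/\mathrm{wind}_\infty$ count no longer excludes them. The cobordism approach is designed precisely to avoid this: in its SFT limit the only Reeb dynamics that occur are those of $\lambda_E$ (explicit) and of $\lambda$ (controlled by (i)--(ii)); no intermediate form ever appears. Your citation of \cite{hofer1998dynamics,hryniewicz2016elliptic} for ``the compactness analysis'' does not help here, since both references use the cobordism method, not a parametric family of symplectizations.

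A secondary issue: your claim ``$\mathrm{wind}_\infty(\tilde u)=1$ is coprime to $p$, so the puncture is relatively prime'' misreads Definition~\ref{furos_primos}. The value $1$ is the winding in $\beta_{\rm disk}$, whereas relative primality asks for the winding in $(\beta_0)^p$ for a trivialization $\beta_0$ along the \emph{simple} orbit $P_0$; these differ by $\mathrm{wind}(\beta_{\rm disk},(\beta_0)^p)\neq 0$. The paper computes this carefully (see the discussion around \eqref{eq:wind_infty}), obtaining winding $n(p-1)$ in the correct frame and then checking $\gcd(n(p-1),k)=1$ via Lemma~\ref{lema:potencia_da_orbita}.
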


The first step in the proof of Proposition \ref{prop:nao_deg} consists of constructing a symplectic cobordism between  $\lambda$ and $\lambda_{E}$, see \cite{hofer1998dynamics, hryniewicz2016elliptic} for more details. Take a smooth function $h:\mathbb{R}\times L(p, 1)\rightarrow\mathbb{R}^{+}$ satisfying
\begin{itemize}
\item $h(a, \cdot)=f$, if $a\leq-2$.
\item $h(a, \cdot)=f_{E}$, if $a\geq2$.
\item $\frac{\partial h}{\partial a}\geq0$.
\item $\frac{\partial h}{\partial a}>\sigma>0$ on $[-1, 1]\times L(p, 1)$ for some $\sigma>0$.
\end{itemize}

The family of contact forms $\lambda_{a}:=h(a, \cdot)\lambda_{0}$, $a\in\mathbb{R}$, interpolates $\lambda$ and $\lambda_{E}$ and the contact structure $\xi=\ker\lambda_{a}$ does not depend on $a$. Choose $J_{E}\in\mathcal{J}_+(\lambda_{E})$ and consider a family $J_{a}\in\mathcal{J}_+(\lambda_{a})$, $a\in\mathbb{R}$,  such that $J_{a}=J$ if $a\leq-2$ and $J_{a}=J_{E}$ if $a\geq2$. Consider a smooth almost complex structure $\bar{J}$ on  $\mathbb{R}\times L(p, 1)$ with the following properties: \begin{itemize}
\item $\bar J = \tilde J_a$ on $(\mathbb{R}\setminus[-1, 1]) \times L(p,1)$, where $\tilde J_a$ satisfies $\tilde J_a \cdot \partial_a = X_{\lambda_a}$ and $\tilde J_a|_\xi = J_a$.
\item $\bar J$ is compatible with the symplectic form $d(h\lambda_{0})$ on $[-1, 1]\times L(p, 1)$.
\end{itemize}
The space of such almost complex structures $\bar J$ is non-empty and contractible in the $C^{\infty}$-topology and is denoted by $\mathcal{J}(\lambda_{E}, J_{E}, \lambda, J)$.

Let $\Gamma\in\mathbb{C}$ be a finite set. Consider non-constant maps
\begin{equation*}
\tilde{u}:\mathbb{C}\setminus\Gamma\rightarrow\mathbb{R}\times L(p, 1),
\end{equation*}
which satisfy
\begin{equation}\label{eq:pseudo_holomorfa}
d\tilde{u}\circ i=\bar{J}(\tilde{u})\circ d\tilde{u},
\end{equation}
for some  $\bar{J}\in\mathcal{J}(\lambda_{E}, J_{E}, \lambda, J)$, with finite Hofer's energy $0<E(\tilde{u})<\infty$. The energy $E(\tilde{u})$ is defined as follows: let $\Lambda$ be the space of smooth functions $\phi:\mathbb{R} \rightarrow [0, 1]$ satisfying $\phi '\geq0$ and $\phi=1/2$ on $[-1, 1]$. Then
\begin{equation*}
E(\tilde{u}):=\sup_{\phi\in\Lambda}\int_{\mathbb{C}\setminus \Gamma}\tilde{u}^{*}d(\phi(a)\lambda_{a}),
\end{equation*}
where $\lambda_{a}$ is seen as $1$-form on $\mathbb{R}\times L(p, 1)$. These maps are called generalized finite energy spheres and if $\Gamma=\emptyset$ then they are called generalized finite energy planes.

As in the case of cylindrical almost complex structures, the set of non-removable punctures $\Gamma$ of a generalized finite energy sphere $\tilde{u}=(a, u)$ is non-empty and if $z_{0}\in \Gamma$ then either $a(z)\rightarrow +\infty$ or $a(z)\rightarrow -\infty$ as $z\rightarrow z_{0}\in\Gamma$. Therefore, it makes sense to talk about positive and negative punctures of $\tilde u$. Moreover, if $\tilde{u}$ is a generalized finite energy sphere then $\tilde{u}$ has at least one positive puncture due to the exact nature of the symplectic cobordism.

\begin{proposition}[Hofer-Wysocki-Zehnder \cite{hofer1998dynamics}]
If $\tilde{u}=(a, u):\mathbb{C}\rightarrow\mathbb{R}\times L(p, 1)$ is a generalized finite energy plane then:
 \begin{itemize}
 \item $a(z)\rightarrow+\infty$ as $|z|\rightarrow+\infty$.
  \item $u(Re^{2\pi i\cdot})\rightarrow x(T\cdot)$ in $C^{\infty}(\R / \Z)$ as $R\rightarrow+\infty$.
  \end{itemize}
  Here $P=(x,T)$ is a contractible closed Reeb orbit of $\lambda_{E}$ whose period satisfies $T= E(\tilde{u})$.
\end{proposition}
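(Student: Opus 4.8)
The plan is to reduce the statement to the established asymptotic analysis of finite energy curves in symplectizations, essentially repackaging the argument of Hofer--Wysocki--Zehnder. First I would pin down the punctures. Since $\mathbb{C}$ has a single end, the only puncture of the generalized finite energy plane $\tilde u=(a,u)$ is the one at $\infty$; by the facts recalled above it is non-removable and, by the exact nature of the symplectic cobordism, $\tilde u$ must have at least one positive puncture, hence $\infty$ is positive and $a(z)\to+\infty$ as $|z|\to+\infty$. This is the first assertion. It also yields $R_{0}>0$ with $\tilde u(\{|z|\geq R_{0}\})\subset(2,+\infty)\times L(p,1)$, a region on which $\bar J$ agrees with the cylindrical almost complex structure $\tilde J_{E}$ determined by $\lambda_{E}$ and $J_{E}$. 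Thus, up to the translation $a\mapsto a-2$, the restriction $\tilde u|_{\{|z|\geq R_{0}\}}$ is a finite energy $\tilde J_{E}$-holomorphic curve in the symplectization of $\lambda_{E}$, to which the symplectization machinery applies near $\infty$.

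Next I would invoke Hofer's theorem \cite{hofer1993pseudoholomorphic} at the positive puncture $\infty$: in positive cylindrical coordinates $(s,t)$ near $\infty$, which we may take to be $z=e^{2\pi(s+it)}$ so that $u(s,t)=u(Re^{2\pi it})$ with $R=e^{2\pi s}$, there exist $s_{n}\to+\infty$ and a closed Reeb orbit $P=(x,T)$ of $\lambda_{E}$ with $u(s_{n},\cdot)\to x(T\cdot)$ in $C^{\infty}_{\mathrm{loc}}(\R/\Z,L(p,1))$. To upgrade this subsequential convergence to the full convergence $u(Re^{2\pi i\cdot})\to x(T\cdot)$ as $R\to+\infty$ asserted in the proposition, I would use that $\lambda_{E}$ is nondegenerate (because $r_{2}^{2}/r_{1}^{2}\notin\mathbb{Q}$), so that the puncture $\infty$ is automatically nondegenerate; the fine asymptotic analysis of Hofer--Wysocki--Zehnder \cite{properties_1} and Siefring \cite{siefring2008relative}, Theorem~\ref{theorem:eigen_asymp}, then gives exponentially fast convergence of $u(s,\cdot)$ to a single orbit $x(T\cdot)$, uniformly in $t$ and in all derivatives, so in particular the limit is independent of the chosen sequence.

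It remains to see that $P$ is contractible with $T=E(\tilde u)$. Contractibility is immediate: each loop $t\mapsto u(Re^{2\pi it})$ bounds the disk $u|_{\{|z|\leq R\}}$ in $L(p,1)$, so letting $R\to+\infty$ shows $x(T\cdot)$ is contractible. For the period--energy identity I would run the standard Stokes/monotonicity argument for exact cobordisms, as in \cite{hofer1998dynamics}: for $\phi\in\Lambda$ the $2$-form $d(\phi(a)\lambda_{a})$ evaluates nonnegatively on $\bar J$-complex lines (using $\partial_{a}h\geq0$ on the compact part and $\tilde J_{a}$-tameness on the ends), so Stokes gives
\begin{equation*}
\int_{\{|z|\leq R\}}\tilde u^{*}d(\phi(a)\lambda_{a})=\int_{\R/\Z}\big(u(Re^{2\pi it})\big)^{*}\big(\phi(a)\lambda_{a}\big)\,dt,
\end{equation*}
a nondecreasing function of $R$; letting $R\to+\infty$, using $\lambda_{a}=\lambda_{E}$ on $\{a\geq2\}$ together with the exponential asymptotics, and then taking the supremum over $\phi\in\Lambda$, one obtains $E(\tilde u)=\int_{\R/\Z}\big(x(T\cdot)\big)^{*}\lambda_{E}=T$. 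The only genuinely technical ingredient here is the asymptotic convergence near $\infty$ --- promoting Hofer's subsequential limit to a single limit orbit with exponential decay --- which under nondegeneracy of $\lambda_{E}$ is exactly the content of the cited theorems; accordingly the proof is an assembly of known results, with the only mild subtlety being the bookkeeping for the nonstandard cobordism energy functional when extracting the period.
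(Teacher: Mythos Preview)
The paper does not supply a proof of this proposition; it is simply quoted with attribution to Hofer--Wysocki--Zehnder \cite{hofer1998dynamics}. Your argument is the standard one and is correct: the unique puncture at $\infty$ must be positive, giving $a\to+\infty$; for $|z|\gg0$ the curve sits in the cylindrical region where $\bar J=\tilde J_E$, so nondegeneracy of $\lambda_E$ (from $r_2^2/r_1^2\notin\mathbb Q$) upgrades Hofer's subsequential limit to full $C^\infty$-convergence via \cite{properties_1}; contractibility is immediate since each loop $t\mapsto u(Re^{2\pi it})$ bounds the disk $u|_{\{|z|\leq R\}}$; and Stokes together with the asymptotics yields $E(\tilde u)=T$ after taking the supremum over $\phi\in\Lambda$ (using $\sup_{\phi}\phi(+\infty)=1$). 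One cosmetic point: in your Stokes identity the boundary integrand should be the pullback of $\phi(a)\lambda_a$ along $t\mapsto\tilde u(Re^{2\pi it})$, not $u$ alone, since $\phi(a)$ depends on the $\mathbb R$-component.
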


Fix $\bar{J}\in \mathcal{J}(\lambda,J,\lambda_E,J_E)$. Denote by $\Theta$ the space of $\bar{J}$-holomorphic generalized finite energy planes asymptotic to $P_{1}^{p}$, modulo holomorphic re-parameterizations. Recall that $P_{1}\subset L(p, 1)$ is an elliptic non-contractible closed Reeb orbit of $\lambda_E$ with period $\pi r_{1}^{2}/p$. Moreover, the Conley-Zehnder index of its $p$-th iterate is $\mu_{CZ}(P_{1}^{p})=3$. The energy of each $\tilde u \in \Theta$ is $E(\tilde{u})=\pi r_{1}^{2}$.

Following \cite{hofer1998dynamics} and \cite{hryniewicz2016elliptic},  $\Theta$ has the structure of a $2$-dimensional manifold. Moreover,  using the intersection theory of holomorphic curves developed by Siefring in \cite{siefring2008relative, siefring2011intersection}, there exists a connected component $\Theta'\subset\Theta$ whose planes are embedded and do not intersect each other.

\subsubsection{Bubbling-off tree} At this moment we are interested in the compactness properties of the component $\Theta'\subset \Theta$. In order to do that we introduce a bubbling-off tree of pseudo-holomorphic curves. Consider an oriented, rooted and finite tree $\mathcal{T}$, and a finite set $\mathcal{U}$ of finite energy $\tilde J$-holomorphic spheres so that the following properties are satisfied:
\begin{itemize}
\item There exists a bijective correspondence between  vertices $q\in\mathcal{T}$ and finite energy spheres $\tilde{u}_{q}\in\mathcal{U}$. Every sphere $\tilde{u}_{q}:\mathbb{C}\setminus\Gamma_{q} \rightarrow\mathbb{R}\times L(p, 1)$ is pseudo-holomorphic with respect to $\tilde{J}_{E}$,  $\bar{J}$ or $\tilde{J}$. Moreover, every oriented path $(q_{1}, ..., q_{N})$ from the root $q_{1}=r$ to a leaf $q_{N}\in \mathcal{T}$ has at most one vertex $q_{i}$ for which $\tilde{u}_{q_{i}}$ is $\bar{J}$-holomorphic. In that case, $\tilde{u}_{q_{j}}$ is $\tilde{J}_{E}$-holomorphic $\forall1\leq j<i$ and $\tilde{u}_{q_{j}}$ is $\tilde{J}$-holomorphic $\forall i<j\leq N$.
\item Every sphere $\tilde{u}_{q}$ has precisely one positive puncture at $\infty$ and $0\leq \#\Gamma_{q}<+\infty$ negative punctures.
\item If the vertex $q$ is not a root, then $q$ has an income edge $e$ that comes from a vertex $q'$ and $\#\Gamma_{q}$ outgoing edges $f_{1}, ..., f_{\#\Gamma_{q}}$ that go  to vertices $p_{1}, ..., p_{\#\Gamma_{q}}\in\mathcal{T}$, respectively. The edge $e$ is associated to the positive puncture of $\tilde{u}_{q}$ and the edges $f_{1}, ..., f_{\#\Gamma_{q}}$ are associated to the $\#\Gamma_q$ negative punctures of $\tilde{u}_{q}$. The asymptotic limit of $\tilde{u}_{q}$ at the positive puncture coincides with the asymptotic limit of $\tilde{u}_{q'}$ at its negative puncture associated to $e$. In the same way, the asymptotic limit of $\tilde{u}_{q}$ at its negative puncture corresponding to $f_{i}$ coincides with the asymptotic limit of $\tilde{u}_{p_{i}}$ at its unique positive puncture. If $\tilde{u}_{q}$ is $\tilde{J}_{E}$-holomorphic then $\tilde{u}_{p_{i}}$ is either $\tilde{J}_{E}$ or $\bar{J}$-holomorphic. If $\tilde{u}_{q}$ is $\bar{J}$ or $\tilde{J}$-holomorphic then $\tilde{u}_{p_{i}}$ is necessarily $\tilde{J}$-holomorphic.
\item If $\tilde{u}_{q}$ is $\tilde{J}_{E}$ or $\tilde{J}$-holomorphic and its contact area vanishes then $\#\Gamma_{q}\geq 2$.
\end{itemize}

Denote by $\mathcal{B}=(\mathcal{T}, \mathcal{U})$ the \textit{bubbling-off} tree, where $\mathcal{T}$ and $\mathcal{U}$ satisfies the above properties. Take a sequence $\tilde{u}_{n}=(a_{n}, u_{n})$ of generalized finite energy planes representing elements of $\Theta'$. It follows that $E(\tilde{u}_{n})=\pi r_{1}^{2}, \forall n.$ Hence the SFT compactness theorem in \cite{bourgeois2003compactness} implies the following theorem.

\begin{theorem}\label{teo_bub_tree}
There exists a bubbling-off tree $\mathcal{B}=(\mathcal{T}, \mathcal{U})$ so that, up to a subsequence of $\tilde{u}_{n}$, the following hold:
\begin{itemize}
\item Given a vertex $q\in\mathcal{T}$ there exist sequences $z_{n}^{q}$, $\delta_{n}^{q}\in\mathbb{C}$ and $c_{n}^{q}\in\mathbb{R}$ such that
\begin{equation}\label{eq:sequencia}
\tilde{u}_{n}(z_{n}^{q}+\delta_{n}^{q}\cdot)+c_{n}^{q} \rightarrow \tilde{u}_{q} \mbox{ in } C^\infty_{\rm loc}(\C \setminus \Gamma_q) \mbox{ as } n \to +\infty.
\end{equation}
Here, $\tilde{u}+c:=(a+c, u)$.
\item The curve $\tilde{u}_{r}$ associated to the root $r\in \mathcal{T}$ is asymptotic to $P_{1}^{p}$ at $\infty$. Moreover, every asymptotic limit of every $\tilde{u}_{q}$ is a contractible closed Reeb orbit of $\lambda_E$ or $\lambda$  and its period is $\leq\pi r_{1}^{2}$.
\end{itemize}
\end{theorem}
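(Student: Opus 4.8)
The plan is to obtain Theorem~\ref{teo_bub_tree} as an application of the SFT compactness theorem of Bourgeois, Eliashberg, Hofer, Wysocki and Zehnder \cite{bourgeois2003compactness}, followed by an organization of the resulting holomorphic building into the combinatorial data of a bubbling-off tree. First I would observe that the sequence has a uniform energy bound: by definition of $\Theta'$ one has $E(\tilde u_n)=\pi r_1^2$ for every $n$. Since all the contact forms $\lambda_a$ share the contact structure $\xi$ and $\bar J$ coincides with the cylindrical structures $\tilde J_E$ and $\tilde J$ outside the compact slab $[-1,1]\times L(p,1)$, this bound is exactly the hypothesis needed to apply the compactness theorem: after passing to a subsequence, the $\tilde u_n$ converge to a holomorphic building consisting of finitely many $\tilde J_E$-holomorphic levels in the symplectization of $\lambda_E$, one $\bar J$-holomorphic cobordism level, and finitely many $\tilde J$-holomorphic levels in the symplectization of $\lambda$. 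The reparametrizing data $z_n^q,\delta_n^q\in\C$ and vertical shifts $c_n^q\in\R$ appearing in \eqref{eq:sequencia} are precisely the rescaling sequences produced by the soft-rescaling argument underlying that theorem, with $c_n^q$ taken to be $0$ on the cobordism level.

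Next I would read off the tree from the building. Each $\tilde u_n$ has domain $\C$, hence genus zero with a single positive puncture at $\infty$ and no negative punctures; the nodal domain of the limit is therefore a connected genus-zero nodal curve, i.e.\ a tree of spheres, and the whole building has exactly one end, a positive one. Rooting the tree at the component $\tilde u_r$ carrying $\infty$ and orienting edges away from the root, I would verify the four properties in the definition of a bubbling-off tree. The type-alternation property and the rules relating the type of a vertex to the type of its children are immediate from the level ordering of the building: along any path from the root to a leaf the levels are visited in decreasing order, so one meets $\tilde J_E$-holomorphic vertices, then at most one $\bar J$-holomorphic vertex, then $\tilde J$-holomorphic vertices. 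That every vertex has exactly one positive puncture follows because, in a tree, each node is shared by a unique parent and a unique child, with the common orbit appearing as a negative puncture of the parent and a positive puncture of the child, while the unique positive end of the building is accounted for by the root at $\infty$; the matching of asymptotic limits across each edge is then exactly the third property. Finally, the stability condition built into SFT compactness rules out ghost components with fewer than three special points and removes trivial cylinders, so any symplectization-level vertex with vanishing contact area must carry at least two negative punctures, which is the fourth property.

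It remains to pin down the asymptotics of the root and the periods and homotopy classes of all asymptotic limits. Since the $\tilde u_n$ are all asymptotic at $\infty$ to the nondegenerate orbit $P_1^p$, so is $\tilde u_r$. For each vertex $\tilde u_q$, Stokes' theorem gives $T^+_q=\sum_i T^-_{q,i}+\int u_q^*d\lambda_\bullet$, where $T^+_q$ is the period of the positive asymptotic limit, the $T^-_{q,i}$ are those of the negative ones, and $\lambda_\bullet$ is the contact form of the relevant level; since the contact area is nonnegative and, by the monotonicity $\partial h/\partial a\geq 0$, the corresponding difference of periods is also nonnegative across the cobordism level, an induction down the tree starting from $T^+_r=E(\tilde u_n)=\pi r_1^2$ shows that every asymptotic limit has period $\leq\pi r_1^2$. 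Contractibility I would establish by induction from the leaves: a leaf is a finite energy plane, whose asymptotic limit bounds the image of a large disk and is therefore contractible in $L(p,1)$; and if all children of $\tilde u_q$ have contractible positive asymptotic limits, these are the negative asymptotic limits of $\tilde u_q$, so the homology relation $[P^+_q]=\sum_i\pm[P^-_{q,i}]$ carried by $\tilde u_q$ forces $[P^+_q]=0$ in $H_1(L(p,1))$, whence $P^+_q$ is contractible because $\pi_1(L(p,1))\cong H_1(L(p,1))\cong\Z/p$ is abelian. I expect the genuine work to lie not in any individual step but in the bookkeeping around the compactness theorem: verifying the uniform gradient and energy estimates that make it applicable across the exact cobordism, and matching its conclusions term by term with the rather rigid combinatorial definition of the bubbling-off tree employed here.
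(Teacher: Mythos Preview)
Your proposal is correct and follows the same approach as the paper, which simply states that the theorem is a consequence of the SFT compactness theorem in \cite{bourgeois2003compactness} together with the uniform energy bound $E(\tilde u_n)=\pi r_1^2$. You supply more detail than the paper does---extracting the tree from the genus-zero building, verifying the four tree axioms via level ordering and stability, and deducing the action bound and contractibility by induction---but this is exactly the unpacking of the cited result that the paper leaves implicit.
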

The bubbling-off tree $\mathcal{B}=(\mathcal{T}, \mathcal{U})$ given in Theorem \ref{teo_bub_tree} is called a SFT-limit of $\tilde u_n$.  Due to a deep analysis found in \cite{hofer1998dynamics} and \cite{hryniewicz2016elliptic} in the cases $p=1$ and $p=2$, respectively, it is possible to obtain a much better description of the bubbling-off tree $\mathcal{B}$. Given sequences $z_{n}^{q}$, $\delta_{n}^{q}\in \C$ and $c_{n}^{q}\in\R$ as in \eqref{eq:sequencia}, it is verified that precisely one of the following alternatives holds:
\begin{enumerate}
\item[(I)] $c_{n}^{q}$ is bounded, $a_{n}(z_{n}^{q}+\delta_{n}^{q}\cdot)$ is $C_{\rm loc}^{0}(\mathbb{C}\setminus \Gamma_{q})$-bounded in $n$ and $\tilde{u}_{q}$ is $\bar{J}$-holomorphic.
\item[(II)] $c_{n}^{q}\to-\infty$, $a_{n}(z_{n}^{q}+\delta_{n}^{q}\cdot)\to +\infty$ in $C_{\rm loc}^{0}(\mathbb{C}\setminus \Gamma_{q})$ as $n\to +\infty$, and $\tilde{u}_{q}$ is $\tilde{J}_{E}$-holomorphic.
\item[(III)] $c_{n}^{q}\to +\infty$, $a_{n}(z_{n}^{q}+\delta_{n}^{q}\cdot)\to-\infty$ in $C_{\rm loc}^{0}(\mathbb{C}\setminus \Gamma_{q})$ as $n\to +\infty$, and $\tilde{u}_{q}$ is $\tilde{J}$-holomorphic.
\end{enumerate}
If $q\in \mathcal{T}$ is a vertex for which (III) is verified then $\tilde{u}_{q}$ is asymptotic at its positive puncture to a closed Reeb orbit of $\lambda$ whose period is strictly less than $\pi r_{1}^{2}$. Hence (III) cannot be verified for the root $r$. Furthermore, as in the cases $p=1$ and $p=2$, the following important result holds for the connected component $\Theta'\subset \Theta$.

\begin{proposition}[\cite{hofer1998dynamics}, \cite{hryniewicz2016elliptic}]\label{prop_mininf}
There exists a sequence $\tilde{u}_{n}=(a_{n}, u_{n})\in\Theta'$ so that
\begin{equation}\label{seq_bubbling}
\min_{z\in\mathbb{C}}a_{n}(z)\rightarrow-\infty \mbox{ as }n\to+\infty.
\end{equation}
\end{proposition}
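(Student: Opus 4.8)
The plan is to argue by contradiction: suppose that for every sequence $\tilde u_n = (a_n,u_n) \in \Theta'$ the minima $\min_{z\in\mathbb{C}} a_n(z)$ stay bounded below, say $\min a_n \geq -C$ uniformly. I would first want to see why this forces the bubbling-off tree $\mathcal{B}$ of any such sequence to be \emph{trivial}, in the sense that it consists of a single vertex (the root) whose curve $\tilde u_r$ is a $\bar J$-holomorphic generalized finite energy plane asymptotic to $P_1^p$, with no descendants. The mechanism is: a lower bound on $a_n$ rules out alternative (III) at every vertex (since (III) requires $c_n^q \to +\infty$ with $a_n(z_n^q + \delta_n^q\,\cdot) \to -\infty$, which pushes the minimum of $a_n$ to $-\infty$ near the rescaling point), so every vertex is of type (I) or (II); but a type-(II) vertex is $\tilde J_E$-holomorphic and sits in the symplectization of $\lambda_E$, and by exactness its contact area is positive and equal to the difference of periods of its asymptotic limits — combined with the fact that $P_1^p$ has period exactly $\pi r_1^2$, the \emph{smallest} period realizable as $E(\tilde u_n)$, no nontrivial splitting above the root is possible. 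I would make this precise using the area/energy identities: the total $d\lambda$-area distributes over the tree, and any genuine breaking would produce an intermediate Reeb orbit of $\lambda_E$ with period $< \pi r_1^2/p \cdot p = \pi r_1^2$, which is impossible because $P_1^p$ already achieves the infimum of admissible periods for planes in $\Theta$ (periods of contractible orbits of $\lambda_E$ of the ellipsoid are $\pi r_i^2/p \cdot \ell$ and the relevant minimality is built into the setup).

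Once the tree is trivial, the consequence is that, along the (sub)sequence, $\tilde u_n$ converges in $C^\infty_{\rm loc}(\mathbb{C})$ (after the $\mathbb{R}$-translation and domain reparametrization normalizations) to a single $\bar J$-holomorphic plane $\tilde u_\infty$ asymptotic to $P_1^p$, and moreover the convergence is of the ``no bubbling'' type, so $\tilde u_n \to \tilde u_\infty$ essentially uniformly up to the puncture. This means $\Theta'$ is \emph{compact}. But $\Theta'$ was shown earlier to be a connected $2$-dimensional manifold of embedded, mutually disjoint planes, and these planes — being embedded, disjoint, and asymptotic to the same orbit $P_1^p$ — must foliate an open region of $\mathbb{R}\times L(p,1)$. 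The standard contradiction (exactly as in Hofer--Wysocki--Zehnder and Hryniewicz--Salom\~ao for $p=1,2$) is that a nonempty compact $2$-manifold $\Theta'$ cannot foliate; equivalently, one follows a path in $\Theta'$ to its boundary and finds that the planes must degenerate, i.e.\ some $\tilde u_n$ must have $\min a_n \to -\infty$ after all. Concretely I would set up a ``moving hyperplane'' or leafwise-compactness argument: if $\Theta'$ is compact then the union of the images $\bigcup_{\tilde u \in \Theta'} \tilde u(\mathbb{C})$ is a closed subset of $\mathbb{R}\times L(p,1)$ that is also open (by the implicit function theorem / automatic transversality giving the $2$-parameter family as a local foliation), hence all of $\mathbb{R}\times L(p,1)$, which is absurd since that space is noncompact and the $a$-coordinate on each leaf is bounded below by $-C$ while $a$ is unbounded below on $\mathbb{R}\times L(p,1)$.

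The key intermediate facts I would invoke, all available from the excerpt or its cited sources, are: (a) the characterization of when the SFT-limit tree is trivial via the trichotomy (I)--(III) and the area bookkeeping; (b) automatic transversality / the Fredholm theory making $\Theta'$ a manifold of embedded disjoint curves near any of its points (from \cite{hofer1998dynamics, hryniewicz2016elliptic}); and (c) the periods of contractible Reeb orbits of the model ellipsoid contact form $\lambda_E$ on $L(p,1)$, which pin down $\pi r_1^2$ as the minimal energy in $\Theta$, so no nontrivial breaking of lower energy can occur beneath it. The main obstacle, I expect, is step (a): carefully ruling out a nontrivial tree under the hypothesis $\inf_n \min_z a_n(z) > -\infty$. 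One must show not merely that type-(III) vertices are excluded, but that the absence of type-(III) vertices, together with the exact-cobordism structure and the minimality of the period of $P_1^p$, forces $\#\mathcal{U} = 1$; the subtlety is handling potential type-(I) and type-(II) vertices that could in principle appear with zero contact area, which is exactly where the last bullet in the definition of the bubbling-off tree ($\#\Gamma_q \geq 2$ for zero-area vertices) and a count of negative punctures carrying total period $\leq \pi r_1^2$ must be combined to derive a contradiction. Once that rigidity is in hand, the compactness-foliation contradiction is the routine part, following verbatim the arguments for $p=1$ and $p=2$.
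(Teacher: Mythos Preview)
Your overall strategy is the correct one and matches the argument in \cite{hofer1998dynamics,hryniewicz2016elliptic}: assume $\inf_{\Theta'}\min_z a(z)>-\infty$, use SFT compactness together with the trichotomy (I)--(III) and the action constraints on $\lambda_E$ to force triviality of the limiting building, and then obtain a contradiction from the fact that the planes in $\Theta'$ locally foliate $\mathbb{R}\times L(p,1)$ by the Fredholm theory of \cite{properties_3}. The paper itself gives no more than this outline, so your proposal is at the right level of detail.

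There is, however, one concrete gap. Your assertion that the hypothesis $\min a\geq -C$ forces $\Theta'$ to be \emph{compact} is false. The cobordism satisfies $\bar J=\tilde J_E$ on $[2,+\infty)\times L(p,1)$, so the $\mathbb{R}$-translates of a fixed $\tilde J_E$-holomorphic plane for the ellipsoid lie in $\Theta'$ for all sufficiently large translation parameters; this gives a sequence in $\Theta'$ with $\min a_n\to +\infty$, whose SFT limit is a single vertex of type (II) (a $\tilde J_E$-plane in the top symplectization), \emph{not} an element of $\Theta'$. So trivial tree does not imply the limit is $\bar J$-holomorphic, and $\Theta'$ is never compact. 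Consequently your open--closed argument, as written, does not go through: you deduced closedness of $\bigcup_{\tilde u\in\Theta'}\tilde u(\mathbb{C})$ from compactness of $\Theta'$, which is unavailable.

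The fix is minor and standard. Either (a) work with a \emph{minimizing} sequence $\tilde u_n\in\Theta'$ with $\min a_n\to m_0:=\inf_{\Theta'}\min a$; since $m_0<+\infty$ this sequence cannot escape to $+\infty$, so the trivial tree is of type (I), the limit $\tilde u_\infty$ is a $\bar J$-plane in $\Theta'$ achieving $\min a_\infty=m_0$, and the local foliation near $\tilde u_\infty$ produces curves with $\min a<m_0$, a contradiction. Or (b) argue closedness of the image directly: if $p_n=\tilde u_n(z_n)\to p_\infty$, then $\min a_n\leq a(p_n)$ is bounded above, so again type (II) is excluded and the limit plane lies in $\Theta'$ and contains $p_\infty$. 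Either route repairs your argument; the key point you missed is that escape to $+\infty$ must be ruled out separately, and this requires an \emph{upper} bound on $\min a_n$ along the relevant sequence, not just the assumed lower bound.
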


The proof of Proposition \ref{prop_mininf} essentially follows from the SFT compactness theorem and the Fredholm theory developed for embedded holomorphic curves in symplectizations \cite{properties_3}.

Results from Hofer, Wysocki e Zehnder in \cite{properties_3} guarantee the following.

\begin{theorem}\label{teo:fred}
There exists a dense subset $\mathcal{J}_{\rm reg}\subset \mathcal{J}(\lambda_{E}, J_{E}, \lambda, J)$ so that if $\bar{J}\in\mathcal{J}_{reg}$ and $\tilde{u}:\mathbb{C}\setminus\Gamma\rightarrow \mathbb{R}\times L(p, 1)$ is a somewhere injective generalized finite energy $\bar J$-holomorphic sphere  then
\begin{equation}\label{fred_ind}
\mathrm{Fred}(\tilde{u}):=\mu_{CZ}(P_{\infty})-\sum_{z\in\Gamma} \mu_{CZ}(P_{z})+\#\Gamma-1\geq0.
\end{equation}
The Conley-Zehnder indices above are computed using a trivialization which extends over $u^*\xi$.
\end{theorem}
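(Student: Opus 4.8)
The plan is to recognize that $\mathrm{Fred}(\tilde u)$ is nothing but the Fredholm index of the linearized Cauchy--Riemann operator $D_{\tilde u}$ at $\tilde u$, and then to force it to be nonnegative via a generic transversality argument for somewhere injective curves, exactly as in Hofer--Wysocki--Zehnder \cite{properties_3}.

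First I would carry out the index computation. Viewing $\tilde u$ as a map on the punctured sphere $\dot S := S^2\setminus(\Gamma\cup\{\infty\})$, with $\infty$ the unique positive puncture (asymptotic limit $P_\infty$) and the points of $\Gamma$ the negative punctures, the Riemann--Roch formula for finite energy curves with nondegenerate asymptotics (see \cite{properties_2,properties_3}) reads
\begin{equation*}
\mathrm{ind}(D_{\tilde u}) = (n-3)\,\chi(\dot S) + 2\,c_1^{\tau}\big(\tilde u^{*}T(\mathbb{R}\times L(p,1))\big) + \mu_{CZ}^{\tau}(P_\infty) - \sum_{z\in\Gamma}\mu_{CZ}^{\tau}(P_z),
\end{equation*}
where $n=2$ and $\tau$ is a system of symplectic trivializations of the $P_z^{*}\xi$ and $P_\infty^{*}\xi$ induced by a global symplectic trivialization of $u^{*}\xi$ over $\dot S$ (which exists since $\dot S$ is homotopy equivalent to a wedge of circles). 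Since $\dot S$ is $S^2$ with $\#\Gamma+1$ punctures, $\chi(\dot S)=1-\#\Gamma$; and with such a $\tau$ the splitting $\tilde u^{*}T(\mathbb{R}\times L(p,1))\cong(\text{trivial rank-two part})\oplus u^{*}\xi$ is trivialized, so $c_1^{\tau}=0$. The formula then collapses to $\mathrm{ind}(D_{\tilde u}) = \#\Gamma-1 + \mu_{CZ}(P_\infty)-\sum_{z\in\Gamma}\mu_{CZ}(P_z)=\mathrm{Fred}(\tilde u)$, with precisely the indices in the statement.

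Second, since $\mathrm{ind}(D_{\tilde u})=\dim\ker D_{\tilde u}-\dim\mathrm{coker}\,D_{\tilde u}$, it is enough to produce $\mathcal{J}_{\rm reg}$ such that $D_{\tilde u}$ is surjective for every somewhere injective generalized $\bar J$-holomorphic sphere with $\bar J\in\mathcal{J}_{\rm reg}$; then $\mathrm{Fred}(\tilde u)=\dim\ker D_{\tilde u}\geq 0$. This is the standard scheme of \cite{properties_3}: one forms the universal moduli space of pairs $(\tilde u,\bar J)$ with $\tilde u$ somewhere injective, shows it is a Banach manifold (the linearization incorporating variations of $\bar J$ in $\mathcal{J}(\lambda_E,J_E,\lambda,J)$ is surjective), and applies Sard--Smale to the projection to $\mathcal{J}(\lambda_E,J_E,\lambda,J)$, whose fiber derivative is the Fredholm operator $D_{\tilde u}$; the set of regular values $\mathcal{J}_{\rm reg}$ is then comeager, hence dense.

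The main obstacle is the one delicate geometric input of this scheme: one must know that a somewhere injective $\tilde u$ has an injective immersed point lying in the cobordism region $(-1,1)\times L(p,1)$, where $\bar J$ is free to vary — on the ends $\bar J$ is forced to equal the fixed structures $\tilde J_E$ and $\tilde J$ and there is no room to perturb. For a generalized sphere with at least one negative puncture this is automatic: $a\to+\infty$ at $\infty$ and $a\to-\infty$ at the points of $\Gamma$, so $a$ assumes values in $(-1,1)$ and $\tilde u^{-1}((-1,1)\times L(p,1))$ is a nonempty open subset of $\dot S$; since the non-injective and critical points of a somewhere injective curve form a closed set with empty interior, this open set contains an injective immersed point and the argument goes through. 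The only remaining case is a generalized finite energy plane ($\Gamma=\emptyset$), which satisfies $a\to+\infty$ at $\infty$ and may stay inside $[1,+\infty)\times L(p,1)$; after an $\mathbb{R}$-translation such a plane is a somewhere injective finite energy plane in the symplectization of the ellipsoid contact form $\lambda_E$, and for these the index identity and Fredholm regularity belong to the classical theory of Hofer--Wysocki--Zehnder \cite{hofer1998dynamics,properties_3} (a suitable $J_E$ rendering all the relevant planes transverse being fixed once and for all). Combining the two cases produces the desired $\mathcal{J}_{\rm reg}$.
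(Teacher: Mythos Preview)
The paper does not prove this theorem; it simply quotes it from Hofer--Wysocki--Zehnder \cite{properties_3}. Your outline is exactly the HWZ argument the paper is invoking: identify $\mathrm{Fred}(\tilde u)$ with the Fredholm index of the linearized Cauchy--Riemann operator via Riemann--Roch, and then use Sard--Smale together with the existence of an injective point inside the perturbation region to obtain surjectivity of $D_{\tilde u}$ for generic $\bar J$. So your approach agrees with what the paper defers to.

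One small refinement to your handling of the case $\Gamma=\emptyset$. Your reduction ``after an $\mathbb{R}$-translation such a plane is a $\tilde J_E$-plane'' is not quite clean, since $\bar J$ equals $\tilde J_a$ (not $\tilde J_E$) on $[1,2]\times L(p,1)$ and is not $\mathbb{R}$-invariant there; a plane with $\min a\in[1,2)$ does not immediately translate into the symplectization of $\lambda_E$. But you do not need any regularity argument at all in this case: for a generalized plane the asymptotic limit $P_\infty$ is a contractible closed Reeb orbit of the ellipsoid form $\lambda_E$, and for the chosen irrational ellipsoid every such orbit has $\mu_{CZ}\geq 3$. Hence $\mathrm{Fred}(\tilde u)=\mu_{CZ}(P_\infty)-1\geq 2\geq 0$ holds automatically, independently of $\bar J$. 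With this observation the remaining (and only nontrivial) case is exactly the one you handle correctly, namely $\Gamma\neq\emptyset$, where the image of $\tilde u$ necessarily meets the cobordism region and the Sard--Smale scheme applies.
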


We assume from now on that  $\bar{J}\in\mathcal{J}_{reg}$ and so \eqref{fred_ind} holds.

\begin{theorem}\label{teo:furo_negativo}
The bubbling-off tree $\mathcal{B}=(\mathcal{T}, \mathcal{U})$ obtained as a SFT limit of a sequence $\tilde u_n=(a_n,u_n)\in \Theta'$ satisfying  \eqref{seq_bubbling} contains only two vertices $r$ and $q$. The root $r$ corresponds to a finite energy $\bar{J}$-holomorphic cylinder $\tilde{u}_{r}:\C\setminus\{0\} \to\R\times L(p, 1)$ which is asymptotic to $P_{1}^{p}$ at its positive puncture $\infty$ and to a closed orbit $P\in\mathcal{P}(\lambda)$ at its negative puncture $0$. The Conley-Zehnder index of $P$ is $3$.  The vertex $q$ corresponds to an embedded finite energy $\tilde{J}$-holomorphic plane $\tilde{u}_{q}=(a_q,u_q):\C\to \R\times L(p, 1)$ asymptotic to $P$ at $\infty$. There exists a $p$-unknotted closed Reeb orbit $P'\in \P(\lambda)$ so that $P=(P')^p$ and $u_q:\C \to L(p,1)$ is a $p$-disk for $P'$.
\end{theorem}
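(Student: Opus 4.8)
\noindent\emph{Strategy and the root.} The argument follows the non-degenerate analyses of \cite{hofer1998dynamics} and \cite{hryniewicz2016elliptic}, the structural inputs being the Fredholm inequality \eqref{fred_ind} for somewhere injective $\bar J$-holomorphic spheres (with $\bar J\in\mathcal J_{\rm reg}$), used together with $\mu_{CZ}(P_1^p)=3$ and dynamical convexity, and the non-negativity $\wind_\pi\ge 0$ of \eqref{wind_eq} together with the trichotomy (I)--(III). The root $\tilde u_r$ of the SFT limit $\mathcal B=(\mathcal T,\mathcal U)$ is asymptotic to $P_1^p$ (period $\pi r_1^2$) at $\infty$, so it is not of type (III), since (III) forces the positive asymptotic limit to be an orbit of $\lambda$ of period $<\pi r_1^2$; it is not of type (II) either, because by choosing $r_2$ large the only closed Reeb orbits of $\lambda_E$ of period $\le\pi r_1^2$ are the iterates $P_1,\dots,P_1^p$, of which only $P_1^p$ is contractible, and this together with the fact that \eqref{seq_bubbling} forces some vertex to be $\tilde J$-holomorphic excludes a $\tilde J_E$-holomorphic root. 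So $\tilde u_r$ is $\bar J$-holomorphic and \eqref{fred_ind} applies; its negative asymptotic limits are closed Reeb orbits of $\lambda$, and chasing the tree down to its leaves (finite energy planes asymptotic to contractible orbits) one finds the relevant indices are $\ge 3$, so
\begin{equation*}
0\le\mathrm{Fred}(\tilde u_r)=3-\sum_{z\in\Gamma_r}\mu_{CZ}(P_z)+\#\Gamma_r-1\le 2-2\#\Gamma_r,
\end{equation*}
forcing $\#\Gamma_r\le 1$, and $\#\Gamma_r=0$ cannot occur since \eqref{seq_bubbling} demands genuine breaking. Hence $\tilde u_r:\C\setminus\{0\}\to\R\times L(p,1)$ is a $\bar J$-holomorphic cylinder from $P_1^p$ to a contractible $P\in\P(\lambda)$, and substituting $\#\Gamma_r=1$ gives $\mu_{CZ}(P)=3$.

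\noindent\emph{Only two vertices.} The vertex $q$ below $r$ is $\tilde J$-holomorphic and asymptotic to $P$ at its positive puncture, as a path from $r$ contains at most one $\bar J$-holomorphic vertex and no $\tilde J_E$-holomorphic vertex below it. If $\tilde u_q$ had $j\ge 1$ negative punctures, with asymptotic limits $P''_z$, then $\mu_{CZ}(P)=3$ gives $\wind^{<0}(A_P,\beta_{\rm disk})=1$, so the positive puncture contributes at most $1$ to $\wind_\infty(\tilde u_q)$, while each negative puncture asymptotic to a contractible orbit contributes at least $\wind^{\ge 0}(A_{P''_z},\beta_{\rm disk})\ge 2$ with a minus sign; thus $\wind_\infty(\tilde u_q)\le 1-2j$, whereas \eqref{wind_eq} gives $\wind_\infty(\tilde u_q)=\wind_\pi(\tilde u_q)+2-(j+1)\ge 1-j$, forcing $j=0$. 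The same estimate, propagated down the tree, excludes further $\tilde J$-holomorphic vertices and branching; that the intermediate asymptotic limits entering these estimates are contractible is guaranteed by the embeddedness and mutual disjointness of the planes in $\Theta'$, preserved in the limit by Siefring's intersection theory, exactly as in \cite{hofer1998dynamics,hryniewicz2016elliptic}. Hence $\mathcal T=\{r,q\}$ and $\tilde u_q=(a_q,u_q):\C\to\R\times L(p,1)$ is a finite energy $\tilde J$-plane asymptotic to $P$.

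\noindent\emph{The plane is a $p$-disk.} Since $\C$ is a disk, \eqref{wind_eq} reads $0\le\wind_\pi(\tilde u_q)=\wind_\infty(\tilde u_q)-1$, and $\mu_{CZ}(P)=3$ forces $\wind_\infty(\tilde u_q)=1$; thus $\tilde u_q$ is fast, $u_q$ is an immersion, and its asymptotic eigensection winds once in $\beta_{\rm disk}$. Writing $P=(P')^k$ for the underlying simple orbit $P'$, the loop of $P$ is contractible so $k[P']=0$ in $\pi_1(L(p,1))=\Z_p$; using that the somewhere injective cylinder $\tilde u_r$ joins $P$ to $P_1^p$, that $P_1$ is $p$-unknotted and generates $\pi_1(L(p,1))$, and---lifting to $S^3$, where $\pi_p^*\lambda$ is again dynamically convex and the lifted configuration is a cylinder from the simple orbit of $\lambda_E$ covering $P_1$ to a simple orbit, together with a fast plane, a situation covered by \cite{hofer1998dynamics}---one obtains $k=p$, so $P=(P')^p$, and that $P'$ is $p$-unknotted. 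As in \cite{hryniewicz2016elliptic}, the puncture $\infty$ of $\tilde u_q$ is then relatively prime (a consequence of $\tilde u_q$ being fast and $P=(P')^p$), so Lemma \ref{mergulho} makes $u_q$ an embedding near $\infty$; together with $\wind_\pi(\tilde u_q)=0$ and positivity of intersections for the somewhere injective $\tilde u_q$, the map $u_q$ is an embedding on $\C$ transverse to $X_\lambda$ with $u_q|_{\partial\D}$ a $p$-fold cover of $P'$, so $u_q:\C\to L(p,1)$ is a $p$-disk for the $p$-unknotted orbit $P'$. This proves all assertions of the theorem.

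\noindent\emph{Main obstacle.} The genuinely new difficulty, which does not arise in the $S^3$ case of \cite{hofer1998dynamics}, is the treatment of the non-contractible Reeb orbits that may appear inside the bubbling-off tree: the Fredholm and winding estimates must be arranged so that the indices involved are defined, and---most delicately---one must prove that the covering multiplicity of the binding orbit $P$ over its simple orbit $P'$ equals $p$, which is exactly where the $\Z_p$-structure of $L(p,1)$, the relatively prime puncture condition, and dynamical convexity interact.
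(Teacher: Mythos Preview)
Your outline captures the right architecture, but there is a genuine gap at the heart of the root analysis. You apply the Fredholm inequality \eqref{fred_ind} directly to $\tilde u_r$ to obtain $\#\Gamma_r\le 1$ and $\mu_{CZ}(P)=3$, but Theorem~\ref{teo:fred} only guarantees \eqref{fred_ind} for \emph{somewhere injective} $\bar J$-curves. Nothing so far rules out $\tilde u_r=\tilde v_r\circ Q$ for a polynomial $Q$ of degree $\ge 2$; in that case the somewhere injective underlying curve $\tilde v_r$ is asymptotic at $\infty$ to $P_1^k$ with $k=p/\deg(Q)<p$, which is \emph{non-contractible}, so the index $\mu_{CZ}(P_1^k,\beta_{\rm disk})=3$ is unavailable and your displayed inequality collapses. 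This is exactly the ``main obstacle'' you name in your last paragraph, but you do not actually confront it. The paper devotes Lemmas~\ref{lema:unico_furo}, \ref{lema:trivializacao} and Proposition~\ref{lema:ind_2k_1} to this case: one constructs a trivialization class $\beta$ along $P_1$ with $\mu_{CZ}(P_1^j,\beta^j)=2j-1$ and $\wind(\beta^p,\beta_{\rm disk})=2-p$, transports it through $\tilde v_r$ to the unique non-contractible negative asymptotic limit, and only then can one run a Fredholm/index estimate and conclude that $\tilde v_r$ has a single negative puncture, that $Q(z)=w_1+a(z-z_1)^{p/k}$, and that the asymptotic limit of $\tilde u_r$ at its sole negative puncture has $\mu_{CZ}=3$.

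Two further points. First, before invoking $\wind_\pi(\tilde u_q)\ge 0$ you must exclude the possibility that $\int u_q^*d\lambda=0$, in which case $\wind_\pi$ is undefined; the paper handles this via Lemma~\ref{lema:potencia_da_orbita}, showing that a vanishing $d\lambda$-area would force a contractible iterate $(P')^{m_z}$ with $m_z<m_\infty$, contradicting the minimality of the contractible power. Second, your argument that $k=p$ by ``lifting to $S^3$'' is not justified: the lift of a non-somewhere-injective cylinder and of a plane asymptotic to a multiply covered orbit through a nontrivial cover is not the configuration treated in \cite{hofer1998dynamics}, and you would need to control how the cover interacts with the punctures. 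The paper instead first shows (again using Lemma~\ref{lema:potencia_da_orbita} and a winding computation in the class $\bar\beta$) that the puncture of $\tilde u_q$ at $\infty$ is relatively prime, then that $u_q$ is an embedded $k$-disk with $\sl(P')=-1/k$, and finally invokes the characterization of universally tight lens spaces from \cite{HLS} to conclude $L(p,1)\cong L(k,1)$, hence $k=p$.
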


\begin{proof}
Let $\tilde{u}_{r}=(a_{r}, u_{r}):\mathbb{C}\setminus \Gamma_{r} \rightarrow\mathbb{R}\times L(p, 1)$ be the finite energy sphere associated to the root $r\in \mathcal{T}$. Then $P_{1}^{p}$ is the asymptotic limit of $\tilde{u}_{r}$ at its positive puncture. Next we show that $\tilde{u}_{r}$ is $\bar{J}$-holomorphic.

There exist sequences $z_{n}^{r}$, $\delta_{n}^{r}\in \C$ and $c_{n}^{r}\in \R$ so that (\ref{eq:sequencia}) holds for $\tilde u_r$. We know that either (I) or (II) holds. Suppose that (II) holds. Then $\tilde{u}_{r}$ is $\tilde{J}_{E}$-holomorphic. If the $d\lambda_E$-area of $\tilde u_r$ vanishes then $\#\Gamma_r \geq 2$. Since every asymptotic limit at a puncture in $\Gamma_r$ is contractible, its action is $\geq \pi r_1^2$. This implies $\int_{\C \setminus \Gamma_r} u_r^* d\lambda_E \leq \pi r_1^2 - \# \Gamma_r \pi r_1^2 <0$, a contradiction. If the $d\lambda_E$-area of $\tilde u_r$ does not vanish then we claim that $\Gamma_r =\emptyset$. If this is not the case then the action of the  asymptotic limit $P_-$ at a negative puncture is strictly less than $\pi r_1^2$. Since $P_-$ must be contractible, we get a contradiction. Indeed, $\pi r_1^2$ is the smallest action of a contractible closed Reeb orbit of $\lambda_E$.

Now suppose that $\tilde{u}_{r}$ is a finite energy $\tilde{J}_{E}$-holomorphic plane. In this case, due to Lemma 3.13 in \cite{hryniewicz2016elliptic} and the absence of bubbling-off points for the sequence $\tilde u_n$, given $M>0$ large there exists $R>0$ so that $a_{n}(z_{n}^{r}+\delta_{n}^{r}z)+c_{n}^{r}>M$ if $|z|>R_{0}$ and $a_{n}(z_{n}^{r}+\delta_{n}^{r}z)>0$ if $|z|\leq R_{0}$, for all large $n$. Since $c_{n}^{r}\to -\infty$, we obtain $\inf_n \min_{z\in \C} a_{n}(z)\geq 0$, contradicting \eqref{seq_bubbling}. We conclude that (I) holds for $\tilde u_r$.

Now we check that $\tilde{u}_{r}$ has at least one negative puncture. In fact, if $\Gamma_{r}=\emptyset$ then $\tilde{u}_{r}$ is a finite energy $\bar{J}$-holomorphic plane. Since $c_{n}^{r}$ is bounded, it follows from Lemma 3.13 in \cite{hryniewicz2016elliptic} that there exists $R_{0}>0$ so that $a_{n}(z_{n}^{r}+ \delta_{n}^{r}z)\in [0, +\infty)$ if $|z|>R_{0}$, for all large $n$. However, $a_{n}(z_{n}^{r}+ \delta_{n}^{r}\cdot)$ is uniformly bounded on compact subsets of $\mathbb{C}$, in particular, on $\{|z|\leq R_{0}\}$. This contradicts \eqref{seq_bubbling}. Hence  $\Gamma_{r}\neq\emptyset$.

So far we know that $\tilde u_r$ is $\bar J$-holomorphic and has at least one negative puncture. In the following we consider two different cases. Suppose first that $\tilde{u}_{r}$ is not somewhere injective, the other case will be treated later. Then we find a somewhere injective $\bar J$-holomorphic curve $\tilde{v}_{r}:\C\setminus\Gamma'\to \R\times L(p, 1)$, $1\leq\#\Gamma'$, and a polynomial $Q:\C \to \C$ with degree ${\rm deg} (Q) \geq 2$ satisfying  $\tilde{u}_{r}=\tilde{v}_{r}\circ Q$ and $Q^{-1}(\Gamma')=\Gamma_{r}$. All punctures in $\Gamma'$ are negative and the corresponding asymptotic limits are closed Reeb orbits of $\lambda$. Furthermore, the asymptotic limit of $\tilde v_r$ at $\infty$ is $P_1^{k}$, where $k = p/ {\rm deg}(Q)$.

Denote by $\Gamma'=\{w_1,\ldots,w_n\}\subset \C$ the set of negative punctures of $\tilde v_r$ and let $P_{w_i}=(x_{w_i}, T_{w_i})$ be the asymptotic limit of $\tilde v_r$ at $w_i$, $i=1,\ldots, n$. Denote by $P_\infty=(x_\infty,T_\infty)=P_1^k=(x_1,kT_{\rm min})$ the asymptotic limit of $\tilde v_r$ at $\infty$. We shall need the following lemma whose proof is found in Appendix \ref{appendix_a}.
\begin{lemma}\label{lema:unico_furo}
The curve $\tilde v_r$ contains precisely one puncture whose asymptotic limit is non-contractible.
\end{lemma}

Assume that $w_1$ is the puncture given in Lemma \ref{lema:unico_furo}. In particular, $P_{w_1}$ is non-contractible and $P_{w_2},\ldots P_{w_n}$ are contractible, see Figure \ref{fig:esfera}. For each $i=2,\ldots,n$, we choose a symplectic trivialization of $x_{w_i}(T_{w_i}\cdot)^{*}\xi$ which extends over a capping disk $f_i:\D \to L(p,1)$ for $P_{w_i}$. Since $\tilde{v}_{r}\# f_2 \# \ldots \#f_n$ is topologically a cylinder,  each class $\beta$ of symplectic trivializations of $x_{\infty}(T_{\infty}\cdot)^{*}\xi$ induces a class $\beta'$ of symplectic trivializations of $x_{w_1}(T_{w_1}\cdot)^{*}\xi$.

\begin{figure}
    \includegraphics[width=0.45\textwidth]{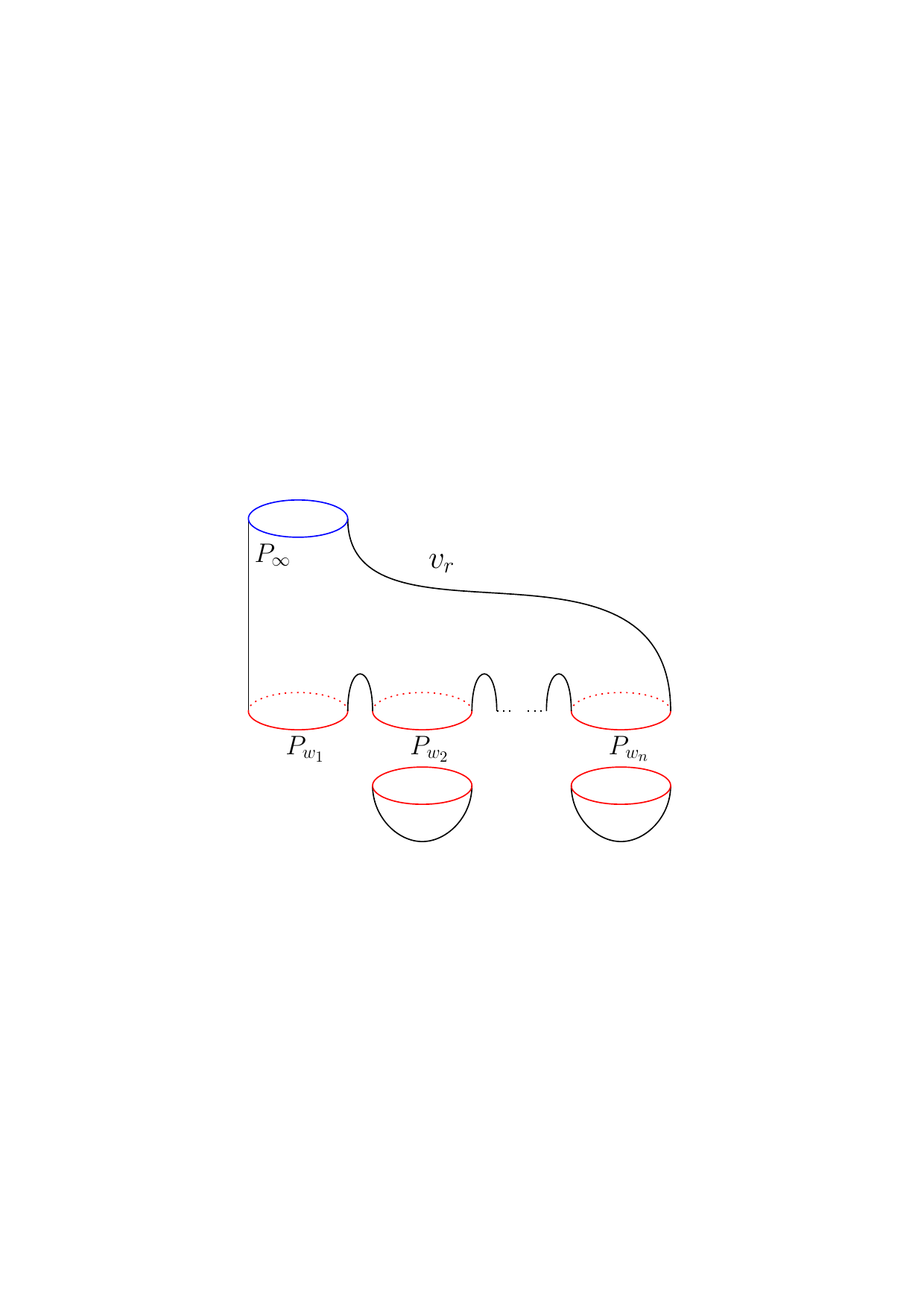}
    \caption{The $\bar J$-holomorphic curve $\tilde{v}_{r}$. $P_{w_1}$ is non-contractible and $P_{w_2},\ldots, P_{w_n}$ are contractible.}
    \label{fig:esfera}
\end{figure}

\begin{lemma}\label{lema:trivializacao}
There exists a class $\beta$ of $d\lambda_E$-symplectic trivializations of $x_{1}(T_{\rm min}\cdot)^{*}\xi$  such that  $\mu_{CZ}(P_{1}^{j}, \beta^j)=2j-1$ for every $j=1.\ldots p$. Moreover, $\mathrm{wind}(\beta^p, \beta_{\rm disk})=2-p$.
\end{lemma}

The proof of Lemma \ref{lema:trivializacao} is found in Appendix \ref{appendix_b}. Due to Lemma \ref{lema:trivializacao} we find a class $\beta^{k}$ of symplectic trivializations of $x_{\infty}(T_{\infty}\cdot)^{*}\xi$ so that
\begin{equation*}
\mu_{CZ}(P_\infty, \beta^k)=\mu_{CZ}(P_{1}^{k}, \beta^k)=2k-1.
\end{equation*}
The class $\beta^k$ induces a class $\beta'$ of symplectic trivializations of $x_{w_1}(T_{w_1}\cdot)^{*}\xi$.

\begin{proposition}\label{lema:ind_2k_1}
$\mu_{CZ}(P_{w_{1}}, \beta')=2k-1$ and $n=1$.
\end{proposition}
\begin{proof}
Using the class $\beta$ as in Lemma \ref{lema:trivializacao} and the dynamical convexity of $\lambda$ we obtain
\begin{equation*}
\begin{aligned}
0 & \leq  \mu_{CZ}(P_\infty, \beta^k)-\mu_{CZ}(P_{w_{1}}, \beta')-\sum_{i=2}^n \mu_{CZ}(P_{w_i},\beta_{\rm disk})+\#\Gamma'-1\\
&= \mu_{CZ}(P_1^{k}, \beta^k)-\mu_{CZ}(P_{w_{1}}, \beta')-\sum_{i=2}^n\mu_{CZ}(P_{w_i},\beta_{\rm disk})+\#\Gamma'-1 \\
&\leq  2k-1-\mu_{CZ}(P_{w_{1}}, \beta')-3(n-1)+n-1 \\
&=-\mu_{CZ}(P_{w_{1}}, \beta')-2(n-1)+2k-1,
\end{aligned}
\end{equation*}
from where we conclude that
\begin{equation}\label{ineq:indice_2k_1}
\mu_{CZ}(P_{w_{1}}, \beta')\leq 2k-1 -2(n-1) \leq 2k-1.
\end{equation}

Now the class $\beta^{k}$ induces the class $(\beta^{k})^{p/k}=\beta^{p}$ of symplectic trivializations of $x_{\infty}(\frac{p}{k}(kT_{\rm min})\cdot)^{*}\xi=x_{\infty}(pT_{\rm min}\cdot)^{*}\xi$ such that $\mu_{CZ}((P_{\infty}^{k})^{p/k}, \beta^p)=2p-1$, see Lemma \ref{lema:trivializacao}. Furthermore, $\mathrm{wind}((\beta^k)^{p/k}, \beta_{\rm disk})=2-p$.
By the hypothesis on $\lambda$, we know that $\mu_{CZ}(P_{w_{1}}^{p/k}, \beta_{\rm disk})\geq3$. Therefore,  it follows from \eqref{relacao_conley_bases} that
\begin{equation}\label{ineq:ind_minimo}
\begin{aligned}
\mu_{CZ}(P_{w_{1}}^{p/k}, (\beta')^{p/k}) & = \mu_{CZ}(P_{w_{1}}^{p/k}, \beta_{\rm disk})+2\wind(\beta_{\rm disk},(\beta')^{p/k})\\
& \geq 3-2\mathrm{wind}((\beta')^{p/k}, \beta_{\rm disk})\\
& = 3-2(2-p)=2p-1.
\end{aligned}
\end{equation}

The periodic orbit $P_{w_{1}}$ is non-degenerate and hence it is either hyperbolic or elliptic. Assume that $P_{w_{1}}$ is hyperbolic and suppose by contradiction that
\begin{equation*}
\mu_{CZ}(P_{w_{1}}, \beta')<2k-1.
\end{equation*}
From (\ref{relacao_conley_rotacao}) and the inequality above, it follows that
\begin{equation*}
\mu_{CZ}(P_{w_{1}}^{p/k}, (\beta')^{p/k})=\frac{p}{k}\mu_{CZ}(P_{w_{1}}, \beta')<p\frac{(2k-1)}{k}\leq 2p-1,
\end{equation*}
which is in contradiction with \eqref{ineq:ind_minimo}.

Assume now that $P_{w_{1}}$ is elliptic. In view of (\ref{ineq:ind_minimo})  we have
\begin{equation*}
\rho(P_{w_1}^{p/k},(\beta')^{p/k}) \geq p-1
\end{equation*}
and hence
\begin{equation}\label{rotacao_k-1}
\rho(P_{w_{1}}, \beta')\geq k\frac{p-1}{p} \geq k-1.
\end{equation}
Therefore, it follows from (\ref{relacao_conley_rotacao}) and (\ref{rotacao_k-1}) that
\begin{equation*}
\mu_{CZ}(P_{w_{1}}, \beta')=2\lfloor\rho(P_{w_{1}}, \beta')\rfloor+1\geq2k-1.
\end{equation*}
Using \eqref{ineq:indice_2k_1} we conclude that $\mu_{CZ}(P_{w_{1}}, \beta')=2k-1$ and $n=1$.
\end{proof}

Proposition \ref{lema:ind_2k_1} implies  $\Gamma'=\{w_1\}$. Furthermore, we know that $\mu_{CZ}(P_{w_1},\beta')=2k-1$. If $P_{w_1}$ is hyperbolic then \eqref{relacao_conley_rotacao} implies that $\rho(P_{w_1},\beta')=\mu_{CZ}(P_{w_1},\beta')/2=k-1/2$. It follows that
\begin{equation*}
\begin{aligned}
\mu_{CZ}(P_{w_{1}}^{p/k}, (\beta')^{p/k}) & = 2\rho(P_{w_1}^{p/k},(\beta')^{p/k})=2\frac{p}{k}\rho(P_{w_1},\beta')\\
& =2\frac{p}{k}\left(k-\frac{1}{2}\right)=2p-\frac{p}{k}\leq 2p-1.
\end{aligned}
\end{equation*}
Now if $P_{w_1}$ is elliptic then \eqref{relacao_conley_rotacao} implies  $\lfloor \rho(P_{w_1},\beta') \rfloor=(\mu_{CZ}(P_{w_1},\beta')-1)/2=k-1$ and hence $\rho(P_{w_1},\beta')<k$. Hence
\begin{equation*}
\begin{aligned}
\mu_{CZ}(P_{w_{1}}^{p/k}, (\beta')^{p/k}) & = 2\lfloor \rho((P_{w_1})^{p/k},(\beta')^{p/k})\rfloor +1=2\lfloor \frac{p}{k}\rho(P_{w_1},\beta')\rfloor +1\\
& \leq 2(p-1) +1 = 2p-1.
\end{aligned}
\end{equation*}
Using (\ref{ineq:ind_minimo}) we  conclude that $\mu_{CZ}(P_{w_{1}}^{p/k}, (\beta')^{p/k})=2p-1$. Lemma \ref{lema:trivializacao} and \eqref{relacao_conley_bases} imply the following crucial fact
\begin{equation}\label{mu=3}
\mu_{CZ}(P_{w_{1}}^{p/k}, \beta_{\rm disk}) =2p-1+2(2-p) = 3.
\end{equation}

Recall that ${\rm deg}(Q)=p/k$. We claim that the polynomial $Q$ has the form
\begin{equation}\label{pol_Q}
Q(z)=w_1+ a(z-z_{1})^{p/k},
\end{equation}
for some $a\in \C^*$ and $z_1\in \C$. If this is not the case, then we write $Q(z)=w_1+b\prod_{i=1}^l(z-z_i)^{m_i}$ for some $b\in \C^*, m_i\in \N^*$, $z_i\neq z_j\in \C, \forall i\neq j,$ and $l\geq 2$. We have $\sum_{i=1}^l m_i = p/k$, therefore $m_{i}<p/k$, for $i=1, \cdots, l$. Observe that $\Gamma_r = \{z_1,\ldots,z_l\}$ is the set of punctures of $\tilde u_r$. Since $P_{z_{i}}$ is contractible, we have
\begin{equation*}
3\leq\mu_{CZ}(P_{z_{i}}, \beta_{\rm{disk}})=\mu_{CZ}(P_{w_{1}}^{m_{i}}, \beta_{\rm{disk}}),
\end{equation*}
but it implies that $5\leq\mu_{CZ}(P_{w_{1}}^{p/k},\beta_{\rm{disk}})$, which contradicts \eqref{mu=3}. Therefore we conclude that $\Gamma_r = \{z_1\}$ and $Q$ has the form \eqref{pol_Q}. Moreover, the asymptotic limit of $\tilde u_r$ at $z_1$ is the contractible closed Reeb orbit $P:=P_{w_1}^{p/k}$ which, in view of \eqref{mu=3}, has Conley-Zehnder index $3$.

Now we deal with the case where $\tilde{u}_{r}$ is somewhere injective. The asymptotic limit $P_{z}$ of $\tilde{u}_{r}$ at a negative puncture $z\in\Gamma_{r}$ is a contractible periodic orbit, its period is $\leq \pi r_1^2$ and, therefore, $\mu_{CZ}(P_{z},\beta_{\rm disk})\geq 3$. Using Theorem \ref{teo:fred} and that $\mu_{CZ}(P_{\infty}^{p}, \beta_{\rm disk})=3$ we obtain
\begin{equation*}
\begin{aligned}
0 & \leq\mu_{CZ}(P_{\infty}^{p}, \beta_{\rm disk})-\sum_{z\in\Gamma_r} \mu_{CZ}(P_{z}, \beta_{\rm disk})+ \#\Gamma_{r}-1\\
& =2-\sum_{z\in\Gamma_r} \mu_{CZ}(P_{z}, \beta_{\rm disk})+ \#\Gamma_{r}\\
& \leq 2-3\# \Gamma_r + \# \Gamma_r = 2(1-\# \Gamma_r).
\end{aligned}
\end{equation*}
Now since $\#\Gamma_r \geq 1$, we conclude that $\#\Gamma_{r}=1$. Hence $\tilde{u}_{r}$ has only one puncture whose asymptotic limit $P$ is a contractible periodic orbit with $\mu_{CZ}(P, \beta_{\rm disk})=3$.

We have concluded that $\tilde u_r$ is a $\bar J$-holomorphic curve with precisely one negative puncture whose asymptotic limit $P=(x,T)$ is contractible and has Conley-Zehnder index $3$.

Now we want to prove that the vertex $q$, immediately below the root $r$, is associated to an embedded $\tilde J$-holomorphic plane. In particular, $r$ and $q$ are the only vertices of the bubbling-off tree $\mathcal{B}$. Since $\tilde{u}_{r}$ is $\bar{J}$-holomorphic, we conclude that the finite energy sphere $\tilde{u}_{q}=(a_{p}, u_{q}):\mathbb{C}\setminus \Gamma_{q} \rightarrow\mathbb{R}\times L(p, 1)$ is $\tilde{J}$-holomorphic. We know that $\tilde{u}_{q}$ is asymptotic to $P=(x, T)$ at $\infty$, which is a contractible closed Reeb orbit of $\lambda$ with Conley-Zehnder index $3$. Every asymptotic limit of $\tilde u_q$ at a negative puncture is contractible and non-degenerate, with period $\leq \pi r_{1}^{2}$ and Conley-Zehnder index $\geq 3$.

Suppose that $\Gamma_{q}\neq\emptyset$ and denote by $P_{z}$ the asymptotic limit of $\tilde{u}_{q}$ at $z\in\Gamma_{q}$. If the $d\lambda$-area vanishes then $\#\Gamma_{q}\geq2$. Theorem 6.11 in \cite{properties_2} implies that $\tilde{u}_{q}(\mathbb{C} \setminus\Gamma_{q})= \mathbb{R}\times P$.  Denote by $P'$ the prime closed Reeb orbit of $P$, i.e., $P'$ is simple and $P = (P')^{m_\infty}$ for some  $m_\infty\in \N^*$. Then $P_z = (P')^{m_z}$ for some $m_z\in \N^*$, $\forall z\in \Gamma_q$. We have $m_\infty=\sum_z m_z\geq 2$ since $\#\Gamma_q \geq 2$. In particular, $P$ is not simple.

\begin{lemma}\label{lema:potencia_da_orbita}
Let $P=(x,T)\in\mathcal{P}(\lambda)$ be a nondegenerate contractible closed Reeb orbit satisfying $\mu_{CZ}(P,\beta_{\rm disk})=3$ and whose period satisfies $T\leq\pi r_{1}^{2}$. Assume that $P$ is not simple. Then there exists a simple non-contractible closed Reeb orbit $P'\in \P(\lambda)$ so that $P=(P')^{k}$ for some $k\geq 2$. Moreover, $k$ divides $p$ and there exists no $k'\in \{1,\ldots,k-1\}$ such that $(P')^{k'}$ is contractible.
\end{lemma}

Its proof is given in Appendix \ref{appendix_c}. Applying the above lemma to $P$ we conclude that $P'$ is non-contractible and $m_\infty$ is the least positive integer $k$ so that $(P')^k$ is contractible. Since $P_z=(P')^{m_z}$ is contractible and $\#\Gamma_q \geq 2$ we obtain $m_z < m_\infty$ $\forall z\in \Gamma_q$. This is a contradiction and we conclude that the $d\lambda$-area of $\tilde u_q$ is positive.

Now since the $d\lambda$-area of $\tilde u_q$ is positive the invariants ${\rm wind}_\infty(\tilde u_q)$ and ${\rm wind}_\pi(\tilde u_q)$ are well-defined, see section \ref{preliminares}.

Let $\sigma$ be a symplectic trivialization of $\tilde u_q^* \xi$. Since every asymptotic limit  of $\tilde u_q$ is contractible we can assume that $\sigma$ extends over every asymptotic limit as a symplectic trivialization of $\xi$ in class $\beta_{\rm disk}$. Since $\mu_{CZ}(P,\beta_{\rm disk})=3$ and $\mu_{CZ}(P_{z},\beta_{\rm disk})\geq 3, \forall z\in \Gamma_q$, we have
\begin{equation*}
\mathrm{wind}_\infty(\tilde u_q, \infty, \sigma) = 1 \mbox{ and } \mathrm{wind}_{\infty}(\tilde{u}_{q}, z,\sigma)\geq2, \forall z \in \Gamma_q.
\end{equation*}
Inequalities above and \eqref{wind_eq} give
\begin{equation*}
\begin{aligned}
0 & \leq \wind_\pi(\tilde u_q) = \wind_\infty(\tilde u_q) -1 + \#\Gamma_q \\
 & = \wind_\infty(\tilde u_q, \infty, \sigma) - \sum_{z\in \Gamma_q} \wind_\infty(\tilde u_q, z, \sigma) -1 + \#\Gamma_q\\
 & \leq
1-2\#\Gamma_{q}-1+\#\Gamma_{q}= -\#\Gamma_{q},
\end{aligned}
\end{equation*}
and, therefore, $\Gamma_{q}=\emptyset$. Thus $\tilde{u}_{q}$ is a $\tilde{J}$-holomorphic finite energy plane and
\begin{equation}\label{plano_rapido}
\mathrm{wind}_{\pi}(\tilde{u}_{q})=\mathrm{wind}_{\infty}(\tilde{u}_{q})-1=0.
\end{equation}
In particular, $u_{q}$ is an immersion transverse to the Reeb vector field $X_{\lambda}$. We claim that $\tilde{u}_{q}$ is somewhere injective. Indeed, otherwise we write $\tilde{u}_{q}=\tilde{v}_{q}\circ Q$, for a somewhere injective finite energy $\tilde{J}$-holomorphic plane and a polynomial $Q:\mathbb{C}\rightarrow\mathbb{C}$ with degree ${\rm deg}(Q)\geq 2$. This forces $\tilde u_q$ to have at least one critical point which is impossible since $u_q$ is an immersion.

Now we claim that $\tilde{u}_{q}$ is an embedding. Arguing indirectly, assume that this is not the case. Then
\begin{equation*}
D:=\{(z_{1}, z_{2})\in\mathbb{C} \times\mathbb{C}\setminus \{{\rm diagonal}\}\}: \tilde{u}_{q}(z_{1})=\tilde{u}_{q}(z_{2})\}
\end{equation*}
is non-empty.  Furthermore,  $D$ is discrete, since $\tilde u_q$ is somewhere injective. This follows from the similarity principle. Therefore, self-intersections of $\tilde{u}_{q}$ are isolated. By the positivity and stability of intersections of pseudo-holomorphic curves, it follows that $\tilde{u}_{n}$ also has self-intersections for large $n$. This is a contradiction since every $\tilde u_n$ is embedded.

We have concluded that the bubbling-off tree has exactly two vertices, the root $r$ and a single vertex $q$ below it.  The curve $\tilde u_q$ is an embedded finite energy $\tilde J$-holomorphic plane whose asymptotic limit at $\infty$ is a closed Reeb orbit $P=(x,T)\in \P(\lambda)$ whose Conley-Zehnder index is $3$. Moreover, $\wind_{\pi}(\tilde u_q)=0$ and hence $u_q$ is an immersion transverse to the Reeb vector field $X_{\lambda}$. The winding number of the asymptotic eigensection $e_\infty$ of $\tilde u_q$ at $\infty$ satisfies $\wind_\infty(e_\infty,\beta_{\rm disk}) = 1.$

At this moment we want to prove that
\begin{equation}\label{mergulho_r>>1}
u|_{\mathbb{C}\setminus B_{R}(0)}\hspace{0.2cm}\mathrm{is}\hspace{0.2cm}\mathrm{an}\hspace{0.2cm}\mathrm{embbeding}\hspace{0.2cm}\mathrm{for}\hspace{0.2cm}R\gg1.
\end{equation}

Since $P$ is contractible and $\mu_{CZ}(P,\beta_{\rm disk})=3$, Lemma \ref{lema:potencia_da_orbita} implies that either $P$ is simple or $P=(P')^{k}$, for a simple non-contractible periodic orbit $P'$, where $k\geq 2$ is an integer which divides $p$. If $P$ is simple, \eqref{mergulho_r>>1} follows directly. In the following we assume that $P$ is not simple.

The class $\beta$ of $d\lambda_E$-symplectic trivializations of the contact structure along $P_1$, as given in Lemma \ref{lema:trivializacao}, induces a class $\bar \beta\simeq \beta$ of $d\lambda$-symplectic trivializations of the contact structure along closed curves in class $1\in\Z_p \simeq \pi_1(L(p,1))$. We know that $[P']=q\in\{1, 2, ..., p-1\}$ and that there exists $n\in\mathbb{N}^{*}$ such that
\begin{equation}\label{eq:poten_orbita}
qk=np.
\end{equation}

Take a $d\lambda$-symplectic trivialization of the contact structure along the contractible closed curve $P=(P')^{k}$ in class $\bar{\beta}^{p}$. Since $\mathrm{wind}(\beta_{\mathrm{disk}}, \bar{\beta}^{p})=p-2$, it follows that
\begin{equation*}
\begin{aligned}
\mu_{CZ}((P')^k, \bar{\beta}^{p}) & =\mu_{CZ}((P')^k,\beta_{\mathrm{disk}})+2\mathrm{wind}(\beta_{\mathrm{disk}}, \bar{\beta}^{p})\\
& =3+2(p-2)=2p-1.
\end{aligned}
\end{equation*}
In particular,
\begin{equation}\label{eq:wind_infty}
\mathrm{wind}(e_{\infty}, \bar{\beta}^{p})=p-1.
\end{equation}

Now choose a $d\lambda$-symplectic trivialization of the contact structure along $P'$ in class $\bar{\beta}^{q}$. From \eqref{eq:poten_orbita} and \eqref{eq:wind_infty} we have

\begin{equation}
\begin{aligned}
\mathrm{wind}(e_{\infty},(\bar{\beta}^{q})^{k}) & = \mathrm{wind}(e_{\infty},(\bar{\beta})^{qk})=\mathrm{wind}(e_{\infty},(\bar{\beta})^{np}) \\
& = \mathrm{wind}(e_{\infty},(\bar{\beta}^{p})^{n})= n\mathrm{wind}(e_{\infty},\bar{\beta}^{p})\\
& = n(p-1).
\end{aligned}
\end{equation}

We claim that $n(p-1)$ and $k$ are relatively prime. To show that observe first that $k$ divides $p$, see Lemma \ref{lema:potencia_da_orbita}. Hence $k$ and $p-1$ are relatively prime. If $n$ and $k$ are not relatively prime, then $n=ln_{1}$ and $k=lk_{1}$, for some $2\leq l$, $n_{1}$, $k_{1}\in\mathbb{N}^{*}$. Together with \eqref{eq:poten_orbita} this implies that $qk_{1}=n_{1}p$ and thus the periodic orbit $(P')^{k_{1}}$ is contractible. Hence $\mu((P')^{k_{1}}, \beta_{\mathrm{disk}})\geq3$ and
\begin{equation*}
\mu((P')^{k}, \beta_{\mathrm{disk}})=\mu(((P')^{k_{1}})^{l}, \beta_{\mathrm{disk}})>3,
\end{equation*}
a contradiction. We conclude that $n(p-1)$ and $k$ are relatively prime numbers. It follows that the puncture of $\tilde u_q$ at $\infty$ is relatively prime and \eqref{mergulho_r>>1} is verified in view of Lemma \ref{mergulho}. Thus $\tilde{u}$ is an embedded fast plane and hence $u(\mathbb{C})\cap x(\mathbb{R})= \emptyset$. For a proof of these facts, see \cite[Section 3.1.8]{hryniewicz2016elliptic}.

Finally, due to \cite[Theorem 2.3]{properties_2},  $u:\mathbb{C} \rightarrow L(p, 1)\setminus x(\mathbb{R})$ is injective. Since $u$ is an immersion it follows that $u$ defines an oriented $k$-disk for $x(\mathbb{R})$, where $k$ is the number of times that $P$ covers $x(\R)$. Applying  \cite[Lemma 3.10]{HLS} we have ${\rm sl}(P)=-\frac{1}{k}$, where $k$ divides $p$. By  \cite[Lemma 7.3]{
HLS} the Reeb flow of $\lambda$ admits a rational open book decomposition with disk-like pages of order $k$ whose binding is $P$. Applying the characterization theorem of contact lens spaces in \cite{HLS}, due to Hryniewicz, Licata and Salom\~ao, we obtain that $L(p, 1)$ is diffeomorphic to $L(k, 1)$. Hence $k$ is necessarily equal to $p$. This concludes the proof of Theorem \ref{teo:furo_negativo}. Proposition \ref{prop:nao_deg} also follows.
\end{proof}

\subsection{The degenerate case}

Let $f:L(p, 1)\rightarrow(0, +\infty)$ be the smooth function so that $\lambda=f\lambda_{0}$. Choose $0<r_{1}<r_{2}$, with $r_{1}^{2}/r_{2}^{2}\in\mathbb{R}\setminus\mathbb{Q}$, so that  $f<f_{E}$ pointwise. By \cite[Proposition 6.1]{hofer1998dynamics} we can choose a sequence $f_{n}\rightarrow f$ in $C^{\infty}$ such that $\lambda_{n}:=f_n\lambda_{0}$ is non-degenerate for all $n$ and, moreover, $f_{n}<f_{E}$ pointwise for all large $n$. Now we show that for all large $n$, $\lambda_{n}$ satisfies the hypotheses of Proposition \ref{prop:nao_deg}. If this is not the case then there exists a  subsequence, still denoted by $\lambda_{n}$, so that $\lambda_{n}$ admits a contractible periodic orbit $Q_{n}$ with period $0<T_{n}\leq\pi r_{1}^{2}$ and $\mu_{CZ}(Q_{n})\leq 2$. By the Arzel\`a-Ascoli theorem, up to a subsequence, $Q_{n}$ converges in $C^{\infty}$ as $n\rightarrow\infty$ to a contractible periodic orbit $Q=(w,S)\in \P(\lambda)$ with period $S \leq \pi r_{1}^{2}$. Due to the lower semi-continuity of the generalized Conley-Zehnder index, it follows that $\mu_{CZ}(Q)\leq 2$, which is a contradiction, since $\lambda$ is dynamically convex. Thus, by Proposition \ref{prop:nao_deg}, $\lambda_{n}$ admits a $p$-unknotted periodic orbit $P_{n}=(x_n,T_n)$, with self-linking number $-\frac{1}{p}$ and $\mu_{CZ}(P_{n}^{p})=3$, $\forall n$. Moreover, the period $T_n$ is uniformly bounded by $\pi r_{1}^{2}/p$ in $n$. By the Arzel\`a-Ascoli theorem, up to a subsequence,  $P_{n}\to P$ in $C^{\infty}$ as $n\rightarrow\infty$,  where $P$ is a periodic orbit of $\lambda$ with period $\leq \pi r_{1}^{2}/p$.

Since $P_{n}$ is $p$-unknotted, it follows that $P_{n}^{p}$ is contractible and therefore $P^{p}$ is contractible as well. Due to the lower semi-continuity of the generalized Conley-Zehnder index, we have $\mu_{CZ}(P^{p})\leq3$. Since $\lambda$ is dynamically convex it follows that $\mu_{CZ}(P^{p})=3$. Now suppose by contradiction that $P$ is not simple. Then $P=(P')^{k}$ for some $k\geq2$, where $P'$ is simple. Since $\pi_{1}(L(p, 1))\simeq\mathbb{Z}_{p}$, we have that $(P')^{p}$ is contractible and therefore $\mu_{CZ}((P')^{p},\beta_{\rm disk})\geq 3$. From $P^{p}=((P')^{k})^{p}=((P')^{p})^{k}$, $k\geq2$, it follows that $\mu_{CZ}(P^{p},\beta_{\rm disk})\geq 5$, which is a contradiction. Hence $P$ is simple and $P_{n}\rightarrow P$ as $n\rightarrow\infty$. The simple periodic orbit $P$ is transversely isotopic to each $P_{n}$ for all large $n$. Hence $P$ is $p$-unknotted and has self-linking number $-\frac{1}{p}$. Applying  \cite[Corollary 1.8]{hryniewicz2016elliptic}, the periodic orbit $P$ bounds a $p$-disk which is a rational global surface of section for the Reeb flow. Moreover, this $p$-disk is a page of a rational open book decomposition of $L(p, 1)$ with binding $P$ such that all pages are rational global surfaces of section. The proof of Theorem \ref{teo:lp} is now complete.

\appendix

\section{Proof of Lemma \ref{lema:unico_furo}}\label{appendix_a}

Observe first that $P_{\infty}^{k}$ is non-contractible since $1\leq k < p$. Hence there exists a puncture $z_{1}\in\Gamma_{r}$ of $\tilde u_r$ so that $Q(z_{1})=w_{1}\in\Gamma'$ and $Q'(z_{1})=0$. In fact, if such a puncture does not exist then $Q$ is a local bi-holomorphism near each puncture of $\Gamma_r$ and hence every asymptotic limit of $\tilde v_r$ at a puncture in $\Gamma'$ is contractible. This forces  $P_\infty^k$ to be contractible, a contradiction.

Write
\begin{equation}\label{pol}
Q(z)=w_{1}+a(z-z_{1})^{k_{1}}(z-z_{2})^{k_{2}} \ldots (z-z_{n})^{k_{n}},
\end{equation} where $0\neq a,z_i\in \C$, $k_i\in \N^*$ and $z_i\neq z_j, \forall i\neq j$. Notice that $Q(z_i)=w_1, \forall i=1\ldots n$.
We claim that
\begin{equation}\label{multiplicidade}
k_{i}\geq2,  \ \ \forall i=1, ..., n.
\end{equation}
The inequality above follows from the fact that the asymptotic limit of $\tilde v_r$ at $w_1$ is non-contractible. In fact, if $k_i=1$ for some $i\in \{1,\ldots,n\}$ then $Q$ is a local bi-holomorphism near $z_i$. Since the asymptotic limit $P_{z_i}$ of $\tilde u_r$ at $z_i$ is contractible, this implies that $P_{w_1}$ is contractible as well, a contradiction. Hence \eqref{multiplicidade} holds and, in particular,
\begin{equation}\label{num_pon_crit}
n\leq \frac{{\rm deg}(Q)}{2}.
\end{equation} Moreover, each $z_i$ is a critical point of $Q$ with multiplicity $k_i-1\geq 1$. Hence the total contribution of $z_i$ to the number of critical points of $Q$ is given by
\begin{equation}\label{tot1}
\sum_{i=1}^n (k_i-1) = {\rm deg}(Q) - n \geq \frac{{\rm deg}(Q)}{2}.
\end{equation}

Now suppose that there exists a puncture $w_2\neq w_1 \in\Gamma'$ of $\tilde v_r$ whose asymptotic limit is also  non-contractible. As before, write
\begin{equation}\label{pol_2}
Q(z)=w_2+b(z-\bar{z}_{1})^{l_{1}}(z-\bar{z}_{2})^{l_{2}} \ldots (z-\bar{z}_{m})^{l_{m}},
\end{equation}
where $0\neq b, \bar z_i \in \C$ is a constant, $l_{i}\geq 2$, $\forall j=1, ..., m$, and
\begin{equation}\label{num_p_crit_2}
m\leq \frac{{\rm deg}(Q)}{2}.
\end{equation}

Since $w_2 \neq w_1$, we clearly have $\bar z_j \neq z_i$, $\forall i,j$. Hence the total contribution of $\bar z_i$ to the critical points of $Q$ is given by
\begin{equation}\label{tot2}
\sum_{i=1}^m (l_i-1) = {\rm deg}(Q) - m \geq \frac{{\rm deg}(Q)}{2}.
\end{equation}
Summing up \eqref{tot1} and \eqref{tot2} we conclude that $Q$ has at least ${\rm deg}(Q)$ critical points, a contradiciton. We conclude that $\tilde v_r$ has precisely one puncture whose asymptotic limit is non-contractible.
\qed

\section{Proof of Lemma \ref{lema:trivializacao}}\label{appendix_b}

Let us consider the $\lambda_E$-closed Reeb orbit $\tilde P_1=(x_1,\tilde T_1=\pi r_1^2)\subset S^{3}$, where $x_1(t)= (e^{2\pi it/\tilde T_1}, 0)\in \C^2$, $t\in\R / \tilde T_1\Z$. The contact structure $\xi_0$ along $P_1$ is spanned by $\{\partial_{x_2},\partial_{y_2}\}$ in coordinates $(x_1+iy_1,x_2+iy_2)\in \C^2$. Consider the non-vanishing section $\tilde Z_1$ of $x_1(\tilde T_1\cdot) ^*\xi_0$ given  by $\tilde Z_1(t)=-e^{-2\pi it}$, $t\in\mathbb{R}/\mathbb{Z}$, where $$a+ib \equiv a\partial_{x_2} + b\partial_{y_2}.$$ Defining $\tilde Z_2 = i\tilde Z_1$ we see that $\{\tilde Z_1,\tilde Z_2\}$ induces the class $\beta_{\rm disk}$ of $d\lambda_E$-symplectic trivializations of $x_1(\tilde T_1\cdot)^*\xi_0$, see \cite{coloquio_finsler}.

Now consider the following non-vanishing section of $x_1(\tilde T_1\cdot)^*\xi_0$
\begin{equation}\label{def:secao_n_nula}
\tilde{Z}(t):=-e^{(p-1)(-2\pi it)},\ \ t\in\R/ \Z.
\end{equation}
 A direct computation shows that ${\rm wind}(\tilde Z, \tilde Z_1)=2-p$.
Hence, denoting by $\tilde \beta$ the homotopy class of the $d\lambda_E$-symplectic trivializations of $x_1(\tilde T_1\cdot)^*\xi_0$ induced by $\tilde{Z}$, we  obtain
\begin{equation}\label{windbeta}
\mathrm{wind}(\tilde{\beta}, \beta_{\rm disk})=2-p.
\end{equation}
Furthermore,  the section $\tilde{Z}$ presents the following $\Z_p$-symmetry
\begin{equation}\label{simetria}
\tilde{Z}(t+ 1/p)=e^{2\pi i\frac{1}{p}}\tilde{Z}(t), \forall t\in \R / \Z,
\end{equation}
see Figure \ref{fig:trivia_ilust}. Hence, using the projection $\pi_{p, 1}:S^{3}\rightarrow L(p, 1)$, we see that $\tilde Z$ descends to a non-vanishing section $\bar Z$ of the contact structure $\xi_{\rm std}$ along the simple closed Reeb orbit $P_1=(x_1,T_1=\tilde T_1/p)\subset L(p,1)$.

Denote by $\beta$ the homotopy class of $d\lambda_E$-symplectic trivializations of $x_1(T_1\cdot)^*\xi_0$ induced by $\bar Z$. It follows from \eqref{windbeta} that
\begin{equation}\label{wind_2}
{\rm wind}(\beta^p, \beta_{\rm disk})=2-p.
\end{equation}

We know that $\mu_{CZ}(P_1^{p}, \beta_{\rm disk})=3$ and hence, by (\ref{relacao_conley_bases}) and \eqref{wind_2}, we obtain
\begin{equation}\label{eq:indice_2p_1}
\mu_{CZ}(P_1^p, \beta^p)=\mu_{CZ}(P_1^p,\beta_{\rm disk})-2\mathrm{wind}(\beta^p, \beta_{\rm disk})
=3-2(2-p)=2p-1.
\end{equation}

Since $P_1$ is elliptic and nondegenerate, we know from (\ref{relacao_conley_rotacao}) that
\begin{equation}\label{eq:ind_dado_rodacao}
2p-1=\mu_{CZ}(P_1^p, \beta^p)=2\lfloor p\rho(P_1, \beta)\rfloor+1,
\end{equation}
which implies that
\begin{equation*}
\frac{p-1}{p}<\rho(P_{1}, \beta)<1.
\end{equation*}
Now given $1\leq k\leq p$, $k\in\mathbb{N}$, we obtain from the inequalities above that
\begin{equation*}
k-1\leq k\frac{(p-1)}{p}<k\rho(P_{1}, \beta)<k,
\end{equation*}
and hence $\lfloor k\rho(P_{1}, \beta)\rfloor=k-1.$  This implies that $\mu_{CZ}(P_1^k, \beta^k)=2k-1$.
\qed

\begin{figure}
    \includegraphics[width=0.75\textwidth]{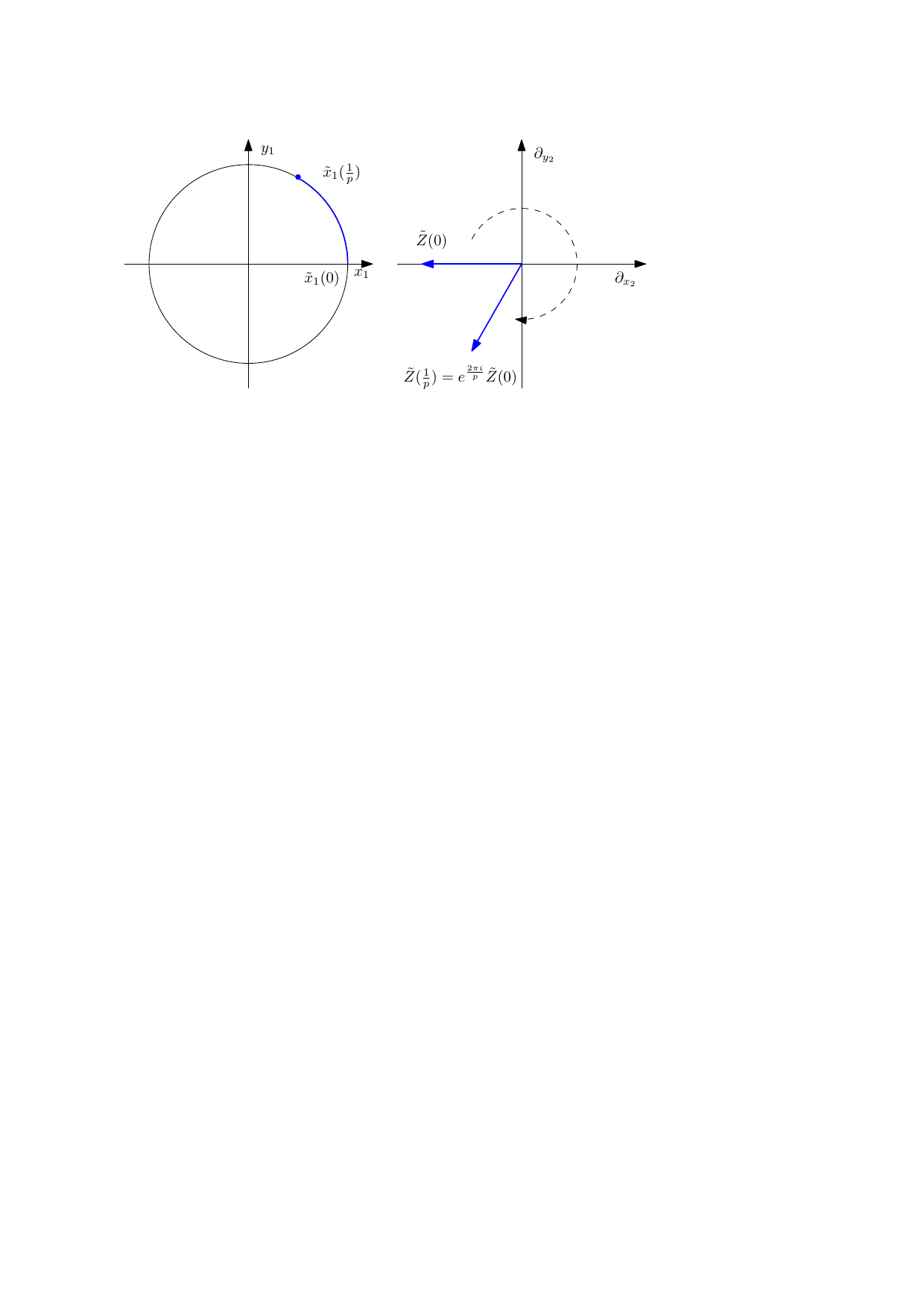}
    \caption{The non-vanishing section $\tilde{Z}$ is $\Z_p$-symmetric.}
    \label{fig:trivia_ilust}
\end{figure}

\section{Proof of Lemma \ref{lema:potencia_da_orbita}}\label{appendix_c}

Let $P'$ be a  contractible periodic orbit so that $P=(P')^{k}$. Since $P$ is nondegenerate, $P'$ is nondegenerate as well. Using \eqref{eq:numero_rot}, \eqref{relacao_conley_rotacao} and that $\mu_{CZ}(P,\beta_{\rm disk})=3$ we obtain
\begin{equation}\label{ineq}
1<\rho(P,\beta_{\rm disk})<2 \Rightarrow \frac{1}{k}<\rho(P',\beta_{\rm disk})<\frac{2}{k}.
\end{equation}
 Since the period of $P'$ is $T/k \leq \pi r_1^2$,  the assumptions on $\lambda$ imply that
\begin{equation*}
\mu_{CZ}(P',\beta_{\rm disk})\geq 3.
\end{equation*}
Hence $\rho(P',\beta_{\rm disk})>1$ which, in view of \eqref{ineq}, gives $k=1$.

We conclude that if $P$ is not simple then there exist $k\geq 2$ and a simple non-contractible closed orbit $P'\in \P(\lambda)$ so that $(P')^k = P$. Next we prove that such $k$ divides $p$. Since $\pi_{1}(L(p, 1))\simeq\mathbb{Z}_{p}$ and $P'$ is non-contractible, we have
\begin{equation*}
[P']=q\in\{1, 2, ..., p-1\}.
\end{equation*}
Since $P$ is contractible, $qk=np$ for some $n\in\mathbb{N}^*$.

If $k$ does not divide $p$ then $k=k_1k_2$ for some integers $k_1,k_2$ where $k_1\geq 2$ divides $n$. Then  $(P')^{k_2}$ is contractible since $qk_2 = \frac{n}{k_1}p$ is a multiple of $p$. Using the above argument we must have $k_1=1$ since $((P')^{k_2})^{k_1}=P$. This is a contradiction and we conclude that $k$ divides $p$.

Now assume that there exists $k'\in \{1,\ldots,k-1\}$ so that $(P')^{k'}$ is contractible. Take the least $k'$ with this property. We claim that $k'$ divides $k$. If this is not the case then let $1\leq r<k'$ be the remainder of the division $k/k'$ and let $m\in \N^*$ be such that $k=mk'+r$. Since $(P')^{k'}$ is contractible we have $qk'=n'p$ for some $n'\in \N^*$. Then $qr = qk - qmk' = np-mn'p$ is a multiple of $p$, which implies that $(P')^r$ is contractible. This contradicts the minimality of $k'$. We conclude that $k'$ divides $k$. Since $(P')^{k'}$ is contractible and $((P')^{k'})^{k/k'}=P$ we obtain by the previous argument that $k/k'=1$, again a contradiction. We conclude that there exists no $k'\in \{1,\ldots,k-1\}$ so that $(P')^{k'}$ is contractible.
\qed

\bibliographystyle{plain}
\bibliography{bibliografia}

\end{document}